\documentclass[12pt,reqno]{amsart}
\usepackage[a4paper,
  left=25mm,
  right=25mm,
  top=25mm,
  bottom=25mm
]{geometry}

\usepackage{amsmath,amsfonts,amssymb, amsthm}
\usepackage{bm}
\usepackage{graphicx}
\usepackage{ascmac}
\usepackage[legacycolonsymbols]{mathtools}
\usepackage{nccmath}
\usepackage{mathrsfs}
\usepackage{mleftright}

\usepackage[italicdiff]{physics}

\usepackage{thmtools}
\usepackage{thm-restate}
\usepackage{needspace}
\usepackage{etoolbox}
\usepackage{xspace}
\usepackage{placeins}
\usepackage{hyperref}
\usepackage{multirow}
\usepackage{makecell}

\usepackage{enumitem}
\setlist[enumerate]{label=(\arabic*), ref=(\arabic*), leftmargin=2.63em, labelsep=0.35em}
\setlist[itemize]{leftmargin=2em, labelsep=0.4em}
\mleftright
\allowdisplaybreaks
\mathtoolsset{showonlyrefs=true}
\def\thefootnote{\arabic{footnote}}

\DeclareMathAlphabet{\mathpzc}{OT1}{pzc}{m}{it}

\makeatletter

\newif\if@noindentafterheading
\let\orig@afterheading\@afterheading
\def\@afterheading{%
  \if@noindentafterheading\global\@afterindentfalse\fi
  \orig@afterheading
}

\renewcommand\section{\@startsection{section}{1}{\z@}%
  {-3.5ex \@plus -1ex \@minus -.2ex}
  {2.6ex \@plus .2ex}
  {\normalfont\large\bfseries}%
}

\renewcommand\subsection{\@startsection{subsection}{2}{\z@}%
  {-2.5ex \@plus -.3ex \@minus -.2ex}%
  {1.5ex \@plus .2ex}%
  {\normalfont\bfseries}%
}

\let\orig@section\section
\renewcommand\section{%
  \@ifstar{\section@star}{\section@nostar}%
}
\newcommand\section@nostar{%
  \@ifnextchar[{\section@opt}{\section@noopt}%
}
\newcommand\section@opt[2][]{%
  \global\@noindentafterheadingtrue
  \orig@section[#1]{#2}%
  \global\@noindentafterheadingfalse
}
\newcommand\section@noopt[1]{%
  \global\@noindentafterheadingtrue
  \orig@section{#1}%
  \global\@noindentafterheadingfalse
}
\newcommand\section@star[1]{%
  \global\@noindentafterheadingtrue
  \orig@section*{#1}%
  \global\@noindentafterheadingfalse
}

\let\orig@subsection\subsection
\renewcommand\subsection{%
  \@ifstar{\subsection@star}{\subsection@nostar}%
}
\newcommand\subsection@nostar{%
  \@ifnextchar[{\subsection@opt}{\subsection@noopt}%
}
\newcommand\subsection@opt[2][]{%
  \global\@noindentafterheadingtrue
  \orig@subsection[#1]{#2}%
  \global\@noindentafterheadingfalse
}
\newcommand\subsection@noopt[1]{%
  \global\@noindentafterheadingtrue
  \orig@subsection{#1}%
  \global\@noindentafterheadingfalse
}
\newcommand\subsection@star[1]{%
  \global\@noindentafterheadingtrue
  \orig@subsection*{#1}%
  \global\@noindentafterheadingfalse
}

\renewcommand\subsubsection{\@startsection{subsubsection}{3}{\z@}%
  {-2.0ex \@plus -.3ex \@minus -.2ex}
  {1.0ex \@plus .2ex}
  {\normalfont\bfseries}
}

\let\orig@subsubsection\subsubsection
\renewcommand\subsubsection{%
  \@ifstar{\subsubsection@star}{\subsubsection@nostar}%
}
\newcommand\subsubsection@nostar{%
  \@ifnextchar[{\subsubsection@opt}{\subsubsection@noopt}%
}
\newcommand\subsubsection@opt[2][]{%
  \global\@noindentafterheadingtrue
  \orig@subsubsection[#1]{#2}%
  \global\@noindentafterheadingfalse
}
\newcommand\subsubsection@noopt[1]{%
  \global\@noindentafterheadingtrue
  \orig@subsubsection{#1}%
  \global\@noindentafterheadingfalse
}
\newcommand\subsubsection@star[1]{%
  \global\@noindentafterheadingtrue
  \orig@subsubsection*{#1}%
  \global\@noindentafterheadingfalse
}

\newcommand{\inputnolabels}[1]{%
  {%
    \let\label\@gobble
    \input{#1}%
  }%
}

\makeatother

\newcommand{\mcB}{\mathcal{B}}
\newcommand{\mcC}{\mathcal{C}}
\newcommand{\mbG}{\mathbb{G}}
\newcommand{\msD}{\mathscr{D}}

\newcommand{\Z}{\mathbb{Z}}								
\newcommand{\Zz}{\mathbb{Z}_{\geq 0}}						
\newcommand{\Zp}{\mathbb{Z}_{> 0}}						
\newcommand{\Q}{\mathbb{Q}}								
\newcommand{\Qb}{\overline{\mathbb{Q}}}					
\newcommand{\C}{\mathbb{C}}								
\newcommand{\Ch}{\widehat{\mathbb{C}}}					
\newcommand{\Pj}{\mathbb{P}}								

\newcommand{\A}{\alpha}									%
\newcommand{\B}{\beta}									%
\newcommand{\G}{\gamma}								%
\newcommand{\la}{\lambda}								%

\newcommand{\OL}{\vspace{5mm}}							%
\newcommand{\HL}{\vspace{2mm}}							%
\newcommand{\q}{\quad}									

\newcommand{\f}[2]{\frac{#1}{#2}}							%
\newcommand{\npmod}[1]{\!\!\!\! \pmod{#1}}					%
\newcommand{\ceq}{\coloneqq} 							
\newcommand{\ie}{i.e.\ }									
\newcommand{\vt}[1]{\left\lvert #1 \right\rvert}					
\newcommand{\gen}[1]{\left\langle #1 \right\rangle}				
\newcommand{\fl}[1]{\left\lfloor #1 \right\rfloor}					
\newcommand{\onarrow}[1]{\overset{#1}{\longrightarrow}}			

\DeclareMathOperator{\id}{id}								
\DeclareMathOperator{\Aut}{Aut}							
\DeclareMathOperator{\Gal}{Gal}							
\DeclareMathOperator{\ord}{ord}							
\DeclareMathOperator{\Sym}{Sym}							

\pretocmd{\section}{\needspace{4\baselineskip}}{}{}
\pretocmd{\subsection}{\needspace{3\baselineskip}}{}{}
\pretocmd{\subsubsection}{\needspace{2\baselineskip}}{}{}

\newtheoremstyle{mythmstyle}  
  {12pt}   
  {12pt}   
  {\itshape} 
  {}      
  {\bfseries} 
  {.}     
  { }     
  {}      

\newtheoremstyle{mydefstyle}  
  {12pt}   
  {12pt}   
  {}       
  {}       
  {\bfseries} 
  {.}      
  { }      
  {}       

\newtheoremstyle{myremarkstyle} 
  {12pt}   
  {12pt}   
  {}       
  {}       
  {\normalfont} 
  {.}      
  { }      
  {}       

\theoremstyle{mythmstyle}
\newtheorem{lemma}{Lemma}[section]
\newtheorem{prop}[lemma]{Proposition}
\newtheorem{thm}[lemma]{Theorem}
\newtheorem{cor}[lemma]{Corollary}

\theoremstyle{mydefstyle}
\newtheorem{definition}[lemma]{Definition}
\newtheorem{exa}[lemma]{Example}
\newtheorem{rem}[lemma]{Remark}

\AtBeginEnvironment{thm}{\needspace{3\baselineskip}}
\AtBeginEnvironment{prop}{\needspace{3\baselineskip}}
\AtBeginEnvironment{lemma}{\needspace{3\baselineskip}}
\AtBeginEnvironment{cor}{\needspace{3\baselineskip}}
\AtBeginEnvironment{definition}{\needspace{3\baselineskip}}
\AtBeginEnvironment{exa}{\needspace{3\baselineskip}}

\newcommand{\thmref}[1]{Theorem~\ref{#1}}
\newcommand{\propref}[1]{Proposition~\ref{#1}}
\newcommand{\lemref}[1]{Lemma~\ref{#1}}
\newcommand{\corref}[1]{Corollary~\ref{#1}}
\newcommand{\defref}[1]{Definition~\ref{#1}}

\newcommand{\figref}[1]{Figure~\ref{#1}}
\newcommand{\tabref}[1]{Table~\ref{#1}}

\newcommand{\belyi}{Bely\u{\i}\xspace}
\newcommand{\dde}{dessin d'enfant\xspace}
\newcommand{\ddes}{dessins d'enfants\xspace}
\newcommand{\Dde}{Dessin d'enfant\xspace}

\newcommand{\DdEs}{Dessins d'Enfants\xspace}

\newcommand{\AD}{\Aut{\msD}}

\newcommand{\sigman}{(1\ 2\ \ldots\ n)}

\numberwithin{equation}{section}

\newcommand{\thmclassone}{Let $[n,n,n]$ be a uniform passport of genus at least~2.

Then it admits a nonregular \dde\ with nontrivial automorphism group
if and only if $n$ is composite.

Furthermore, if $n$ is composite, then for every divisor $r$ of $n$
satisfying $1<r<n$, there exists a dessin $\msD$ with passport $[n,n,n]$
such that $\vt{\AD}=r$.}

\newcommand{\thmclassthree}{If a uniform passport $[b^{q}, b^{q}, n]$ has genus at least~$2$, then it admits a \dde with a trivial automorphism group.

The same statement holds for the passports $[b^{q}, n, b^{q}]$ and $[n, b^{q}, b^{q}]$.}

\newcommand{\thmtrivautd}{
Let $\ell_{1}$ and $\ell_{2}$ be distinct prime numbers with
$\ell_{1} < \ell_{2}$.
Then every \dde with passport
$[\ell_{2}^{\ell_{1}}, \ell_{2}^{\ell_{1}}, \ell_{1}\ell_{2}]$
has trivial automorphism group.

The same conclusion holds for the passports
$[\ell_{1}\ell_{2}, \ell_{2}^{\ell_{1}}, \ell_{2}^{\ell_{1}}]$
and
$[\ell_{2}^{\ell_{1}}, \ell_{1}\ell_{2}, \ell_{2}^{\ell_{1}}]$.}

\begin{document}

\title{Automorphism Groups of Uniform Dessins d'Enfants of Genus at Least Two}

\author{Tatsuya Ohnishi}
\markboth{}{}

\begin{abstract}
For a smooth algebraic curve defined over a number field, one can associate a bipartite graph called a \emph{dessin d'enfant}.

We study the regularity and automorphism groups of dessins with \emph{uniform passports}. In a previous paper, we proved that every passport of the form $[n,b^{q},n]$ of genus at least~$2$ admits a dessin with trivial automorphism group. Here we prove the analogous result for passports of the form $[b^{q},b^{q},n]$.

We also construct examples of uniform passports of genus at least~2 for which every dessin with that passport has nontrivial automorphism group, and others for which every dessin with that passport has trivial automorphism group.

Finally, we give an alternative proof of the $[n,b^{q},n]$ case using counting arguments based on centralizers of permutations.
\end{abstract}

\maketitle

\vspace{-1.9\baselineskip}

\def\thefootnote{\fnsymbol{footnote}}
\makeatletter
\renewcommand\@makefntext[1]{%
  \noindent\hspace{0.5em}#1%
}
\makeatother
\footnotetext{Tatsuya~Ohnishi~(\raisebox{-0.18ex}{\includegraphics[height=2.05ex]{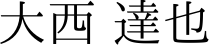}})}
\footnotetext{Graduate School of Information Science and Technology, The University of Osaka, Japan}
\footnotetext{e-mail: {\tt ohnishi-t@ist.osaka-u.ac.jp}}
\footnotetext{2020 Mathematics Subject Classification: Primary 14H57; Secondary 11G32}
\footnotetext{Keywords: \dde, uniform passport, monodromy group, automorphism group, regular dessin}
\makeatletter
\renewcommand\@makefntext[1]{%
  \noindent\@makefnmark\ #1%
}
\makeatother

\def\thefootnote{\arabic{footnote}}

{\small
\tableofcontents
}



\section{Introduction}

\subsection{\belyi's Theorem and \DdEs}

\label{sec:belyi}
\begin{thm}[\belyi's Theorem]\cite{Belyi79}\cite{Belyi02}\cite[Theorem~1.3]{Jones16}
Let $X$ be a compact Riemann surface, that is, a smooth projective algebraic curve in
$\Pj_{\C}^{N}$ for some $N$.
Then $X$ can be defined over the field of algebraic numbers $\Qb$ if and only if there exists a non-constant meromorphic
function $\B\colon X \to \Ch\ (\ceq \C \cup \{ \infty \})$ ramified over at most three points.
\end{thm}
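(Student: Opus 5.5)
The two directions of \belyi's Theorem are of very different depth, so I will prove them separately.

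\emph{Only if} (Weil's rigidity direction). Suppose $\B\colon X\to\Ch$ is ramified only over $\{0,1,\infty\}$. By the Riemann existence theorem, the isomorphism class of the covering is determined by topological data: finitely many covers of given degree $d$ unramified outside three points, up to isomorphism, correspond to transitive permutation triples $(\sigma_0,\sigma_1,\sigma_\infty)$ with $\sigma_0\sigma_1\sigma_\infty=1$, up to simultaneous conjugation. Since $\Ch$ and the branch set are defined over $\Q$, every $\tau\in\Aut(\C)$ sends $(X,\B)$ to another cover $(X^{\tau},\B^{\tau})$ of the same degree with the same branch locus. The orbit of $(X,\B)$ under $\Aut(\C)$ is therefore finite. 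A standard descent argument then finishes this direction. The stabilizer of the isomorphism class has finite index in $\Aut(\C)$, so its fixed field (the field of moduli) is a finite extension of $\Q$. The pair $(X,\B)$ is then defined over a finite extension of the field of moduli, by Weil's descent criterion or simply by spreading out over a finitely generated field and specializing. Hence $X$ is defined over $\Qb$. This step is the main obstacle: one must make ``finitely many conjugates implies definable over $\Qb$'' rigorous. I would do this by writing $X$ and $\B$ over a finitely generated field $K=\Q(t_1,\dots,t_m)(\text{alg})$. I would then note that if some $t_i$ were transcendental over $\Qb$ in an essential way, the automorphisms of $\C$ moving $t_i$ would produce infinitely many non-isomorphic covers. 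To make this precise, I would specialize the $t_i$ to algebraic values in a small neighbourhood, where the topological type of the cover is locally constant.

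\emph{If} (\belyi's algorithm). Suppose $X$ is defined over $\Qb$. Choose any non-constant $f\in\Qb(X)$. Its critical values form a finite set $S_0\subset\Pj^1(\Qb)$. First, reduce to $S\subset\Pj^1(\Q)$ by induction on the maximal degree of the Galois orbits. Let $m(x)$ be the minimal polynomial over $\Q$ of the union of the conjugates of non-rational elements of $S$. Replacing $f$ by $m\circ f$ gives new critical values $m(S)\cup m(\{\text{roots of }m'\})$. The roots of $m'$ have degree less than $\deg m$, so a suitable measure strictly decreases; iterate this until all branch values are rational. Then, using a Möbius transformation, assume $S\subset\{0,1,\infty\}\cup\{\lambda\}$ with $\lambda=\tfrac{a}{a+b}$ and $a,b\in\Z_{>0}$. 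Apply
\[
  x\mapsto \frac{(a+b)^{a+b}}{a^a b^b}\,x^a(1-x)^b ,
\]
which sends $0,1,\infty,\lambda$ into $\{0,1,\infty\}$ and whose only other critical point is $\lambda$. This reduces the number of branch points by one. Iterating gives a \belyi function $\B$. The routine checks are the critical-value computations for both polynomials, which follow from the chain rule.
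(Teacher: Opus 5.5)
The paper does not prove this theorem; it only quotes it from the literature, so there is no internal proof to compare against. Your outline is the standard argument: \belyi's algorithm in one direction, and finiteness of conjugates plus specialization in the other. It is sound, but several points need repair. First, your labels are swapped. The algorithmic direction ($X$ defined over $\Qb$ implies a \belyi function exists) is the ``only if'' part. The Riemann-existence/rigidity argument is the ``if'' part.

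\textbf{Algorithmic direction.} Induction on the maximal degree of the Galois orbits does not work for the $m$ you chose. Suppose the non-rational critical values form two conjugate pairs, e.g.\ $m=(x^{2}-2)(x^{2}-2x-2)$. Then $m'$ is an irreducible cubic over $\Q$, and $m\circ f$ acquires critical values of degree $3>2$.

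The quantity that does drop is $\deg m$, the number of non-rational finite critical values. The new non-rational ones lie in the Galois-stable set $m(\{c : m'(c)=0\})$, which has at most $\deg m-1$ elements. All other new critical values are rational: images of rational points, $0$, and $\infty$.

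Also, ``assume $S\subset\{0,1,\infty,\lambda\}$'' should read $\{0,1,\infty,\lambda\}\subset S$, with the remaining points of $S$ rational. You get $\lambda\in(0,1)$ by first applying one of the six M\"obius maps permuting $\{0,1,\infty\}$. The map $x\mapsto c\,x^{a}(1-x)^{b}$ has rational coefficients, so it keeps the other points of $S$ rational. It also collapses $0,1,\infty,\lambda$ onto three values, and that is exactly why $|S'|\le|S|-1$.

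\textbf{Descent direction.} The sentence about some $t_i$ being transcendental ``in an essential way'' is not an argument. Weil's descent criterion, as usually stated, descends along an algebraic Galois extension and requires a compatible system of isomorphisms; it does not by itself take you from $\C$ down to $\Qb$.

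The specialization argument you mention at the end is what actually does the work. It also makes the finite-orbit and field-of-moduli steps superfluous. To turn it into a proof, state three facts.
\begin{enumerate}
\item $(X,\B)$ is the fibre at a $\C$-point $v_0$, lying over the generic point, of a family $(\mathcal{X}_v,\B_v)$ over an irreducible $\Qb$-variety $V$. Shrink to a Zariski-open $V^{\circ}\ni v_0$ defined over $\Qb$ on which every fibre is a smooth projective curve and $\B_v$ has degree $d$ and is \'etale over $\Pj^{1}\setminus\{0,1,\infty\}$. This is possible because the bad locus is constructible and misses the generic point.
\item Take a contractible neighbourhood $U$ of $v_0$. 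The resulting finite covering of $U\times(\Pj^{1}\setminus\{0,1,\infty\})$ restricts to isomorphic coverings on all slices. Hence $(\mathcal{X}_v,\B_v)\cong(X,\B)$ for every $v\in U$, by Riemann existence and GAGA.
\item $V^{\circ}(\Qb)$ is dense in $V^{\circ}(\C)$ in the analytic topology, so $U$ contains an algebraic point.
\end{enumerate}
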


This deep theorem reveals a surprising connection between algebraic curves over
number fields and combinatorial structures on surfaces. This connection deeply
fascinated Grothendieck and led to his theory of \ddes.

For any compact Riemann surface $X$ defined over a number field, one can choose a suitable
function $\B$ (called a \emph{\belyi function}) such that all of its critical values lie among
three points.
By applying an appropriate M\"obius transformation, these points may be taken to be
$0$, $1$, and $\infty$.

For such a pair $(X, \B)$ (called a \emph{\belyi pair}), one can associate a bipartite graph
called a \emph{\dde} (child's drawing), or simply a \emph{dessin}
(see \defref{def:dessin}).

The dessin for $(X, \B)$ is drawn on an orientable surface of the same genus as $X$, where the black vertices ($\bullet$)
 and white vertices ($\circ$) represent $\B^{-1}(0)$ and $\B^{-1}(1)$, respectively\footnote{Some references
(such as \cite{Jones16}) represent $\B^{-1}(0)$ by white vertices and $\B^{-1}(1)$ by black vertices.
In this paper, we adopt the convention used in many classical references.}.
The faces --- that is, the connected components bounded by edges --- correspond to $\B^{-1}(\infty)$, and
the edges correspond to $\B^{-1}([0,1])$. Each face is homeomorphic to a disk.

By studying the properties of \ddes, one can combine insights from algebraic geometry and
combinatorics, including graph theory, to enrich both areas of analysis. This perspective also opens up
a range of possibilities for further applications.

Two dessins $\msD$ and $\msD'$ are said to be \emph{isomorphic} if there exists
an orientation-preserving homeomorphism between the underlying surfaces
that induces an isomorphism of the embedded bipartite graphs, preserving
vertex colors and the cyclic order of incident edges at each vertex.
Equivalently, if $(X,\B)$ and $(X',\B')$ are the corresponding \belyi pairs,
then $\msD$ and $\msD'$ are isomorphic if and only if there exists a
biholomorphic map $\varphi\colon X \to X'$ such that $\B'\circ\varphi=\B$.

Two examples of \ddes are shown in \figref{fig:dessins-ex}. Since $X=\Ch$ has genus~0 in both cases,
the corresponding dessins can be drawn on a sphere.
For the left dessin, we may take
\begin{align}
\B(z) = - \f{5z^{3}(3z-6+\sqrt{6})^{2}}{(9-4\sqrt{6})(-15z+12+2\sqrt{6})},
\end{align}
as a \belyi function.
The number of edges of a dessin is equal to the degree of the corresponding \belyi function, which is $5$ in this case.

\begin{figure}[htbp]
\centering
\includegraphics[width=135mm]{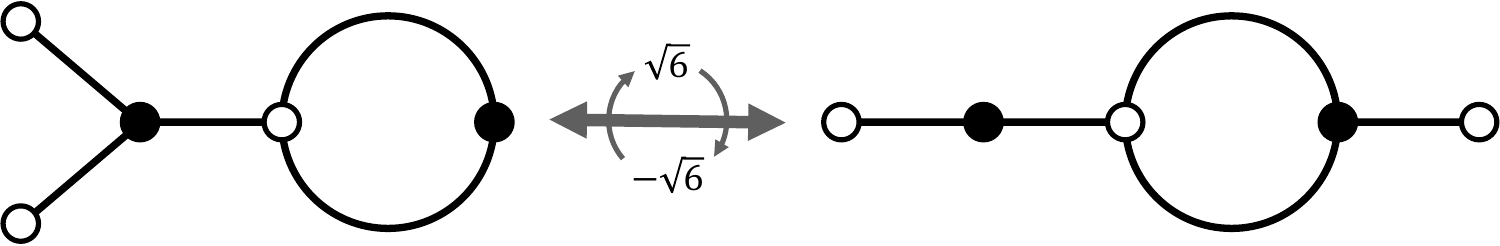} 
\caption{A pair of Galois-conjugate \ddes}
\label{fig:dessins-ex}
\end{figure}

By applying the Galois automorphism $\sqrt{6} \mapsto -\sqrt{6}$
to the coefficients of $\B(z)$, we obtain the dessin on the right. This is an example of
an action of the absolute Galois group $\mbG = \Gal(\Qb/\Q)$ on \ddes \cite[4.2.1]{Jones16}.

The action of $\mbG$ on the set of all dessins is faithful; that is,
for any two distinct elements of $\mbG$, there exists a dessin
whose images under these elements lie in distinct isomorphism classes.
Therefore, the study of Galois orbits of dessins provides a powerful tool for investigating the structure
of the absolute Galois group.
 
\subsection{Regularity and Automorphism Groups}

\label{sec:regaut}
Two fundamental invariants (up to isomorphism) of a \dde with respect to the Galois action are its regularity and its automorphism group.
A dessin is said to be \emph{regular} if its \emph{monodromy group} (see \defref{def:monog}) acts
regularly (that is, freely and transitively) on the set of edges.
From a geometric point of view, the \emph{automorphism group} of a dessin is the group of deck transformations of the
associated \belyi covering.
Equivalently, it can be identified with the centralizer of the monodromy group in the symmetric group acting on the edges.

By studying regularity and automorphism groups, one can gain insight into the symmetry properties of a \dde.
The order of the automorphism group always divides the number of edges of the dessin, and the equality of these two numbers is equivalent to the dessin being regular.
Thus, the structure of the automorphism group provides a precise measure of the symmetry exhibited by the dessin.

\figref{fig:regular} shows two dessins of genus~1 with the same passport; that is, they have the same valency list:
each has $8$ edges, two black vertices of valency $4$, four white vertices of valency $2$, and two faces of valency $4$.
Although both dessins may appear highly symmetric at first glance, the left dessin is regular, whereas the right one is not.
The automorphism group of the left dessin has order~$8$, while that of the right dessin has order~$4$.

\begin{figure}[htbp]
\centering
\includegraphics[width=135mm]{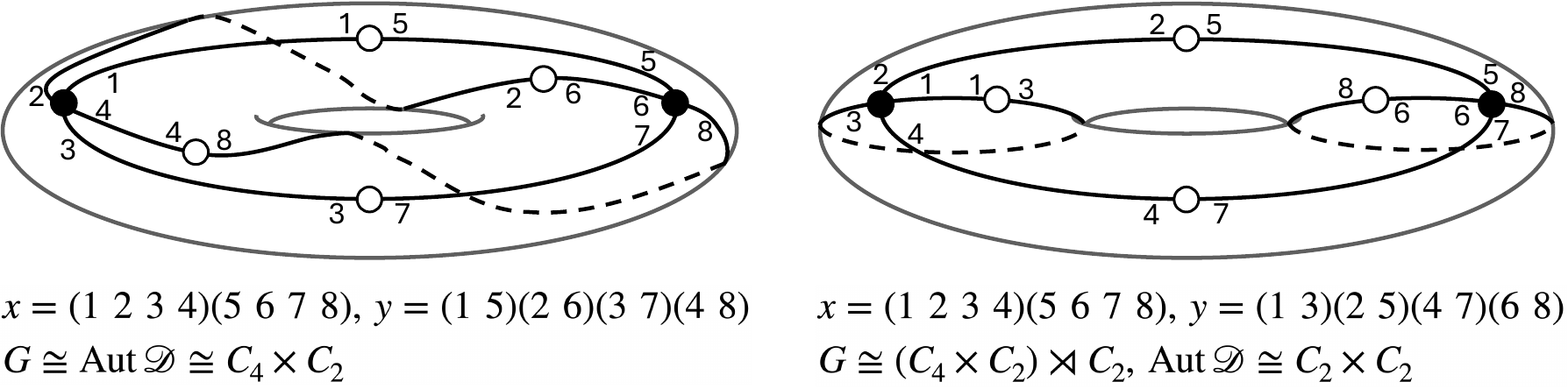} 
\caption{Regular and non-regular dessins with the same passport}
\label{fig:regular}
\end{figure}

Thus, regularity and automorphism groups may differ among dessins with the same passport.
However, they are invariant under the action of the absolute Galois group.
They therefore play an essential role in the study of families of dessins lying in the same Galois orbit.

Furthermore, regularity and automorphism groups have a significant impact on the relationship between
the field of moduli and the field of definition.

For a \dde $\msD$ corresponding to a \belyi pair $(X, \B)$, the \emph{field of moduli} $M(\msD)$
is defined as the fixed field of the subgroup
$G(\msD) = \{ \sigma \in \mbG \mid \msD \cong \msD^{\sigma} \} \le \mbG$.
That is, $M(\msD)$ is the subfield of $\Qb$ consisting of those elements that are fixed by every automorphism
$\sigma$ for which the conjugate dessin $\msD^{\sigma}$ is isomorphic to $\msD$. For the dessins in
\figref{fig:dessins-ex}, $M(\msD)$ is $\Q(\sqrt{6})$.

A number field $K$ is called a \emph{field of definition} of $\msD$ if both $X$ and $\B$ can be defined over $K$.
Unlike the field of moduli, a field of definition of $\msD$ is in general not unique,  and there need not exist
a smallest field of definition.

The field of moduli depends only on the isomorphism class of $\msD$ and is contained in any field of definition of $\msD$.
For most \ddes, the field of moduli is also a field of definition; that is, $X$ and $\B$ can be defined over $M(\msD)$.
However, this is not always the case.
In such situations, there is no field of definition of $\msD$ that is fixed by all $\sigma \in G(\msD)$.

This remarkable phenomenon does not occur for dessins with trivial automorphism groups. Interestingly, it does not
occur for regular dessins either. Thus, in the context of this problem, the most interesting cases lie between these
two extremes, namely nonregular dessins with nontrivial automorphism groups.

In this paper, we develop methods for analyzing how regular dessins, dessins with trivial automorphism groups,
and nonregular dessins with nontrivial automorphism groups are distributed. These results provide new information
on the interplay between regularity and automorphism groups, and may help clarify their role in the study of fields of
moduli and fields of definition.

\subsection{Uniform Passports and Dessins}

\label{sec:uniform}
A \dde is said to be \emph{uniform} if the valencies of black vertices, white vertices, and faces are each constant.
One also says that it has a uniform passport (or uniform valency list).

Uniform dessins exhibit a high degree of symmetry. However, while every regular dessin is uniform, the converse does not hold: a uniform dessin need not be regular. In other words, uniformity does not represent the highest possible level of symmetry.

As an example, consider uniform passports $[a^{p}, b^{q}, c^{r}]$, where $n = pa = bq = rc$, and suppose that $n = 6$.
By symmetry among black vertices, white vertices, and faces, we may assume $c \ge a \ge b$ (equivalently, $r \le p \le q$).

In genus~$0$, the uniform passports are $[6, 1^{6}, 6]$ and $[2^{3}, 2^{3}, 3^{2}]$.
The corresponding dessins are shown in \figref{fig:genus0-n6}.
In both cases, the dessins are regular.

\begin{figure}[htbp]
\centering
\includegraphics[width=60mm]{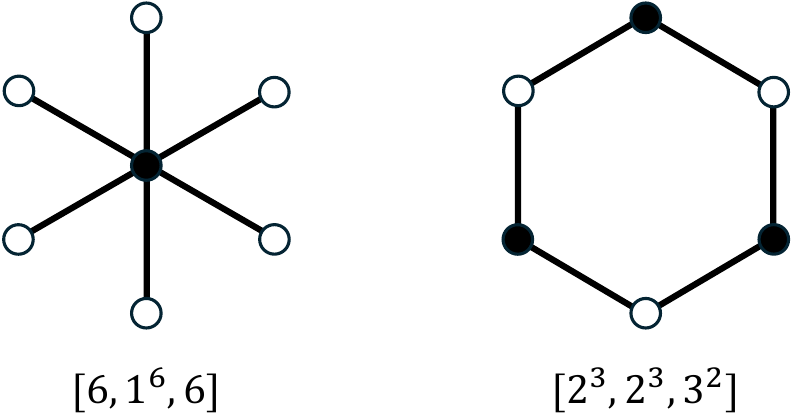}
\caption{Uniform \ddes of genus~0, degree 6}
\label{fig:genus0-n6}
\end{figure}

In genus~1, the uniform passports are $[3^{2}, 2^{3}, 6]$ and $[3^{2}, 3^{2}, 3^{2}]$, and the corresponding dessins are shown in \figref{fig:genus1-n6}.
The dessin on the left is regular, whereas the one on the right is not.
The automorphism group of the right-hand dessin has order~2, which is strictly smaller than 6.

\begin{figure}[htbp]
\centering
\includegraphics[width=135mm]{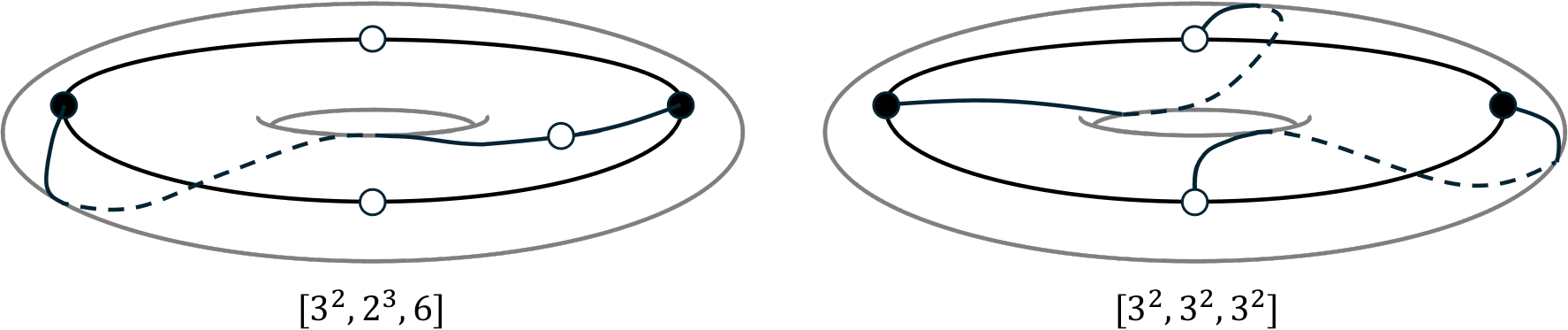}
\caption{Uniform \ddes of genus~1, degree 6}
\label{fig:genus1-n6}
\end{figure}

For genus at least~2, the only uniform passport is $[6, 3^{2}, 6]$, which has genus~2.
There are four dessins with this passport, shown in \figref{fig:genus2-n6}.
The orders of their automorphism groups are 6, 3, 2, and 1, respectively, from the upper left to the lower right.
Only the upper-left dessin is regular; the others are non-regular, and the lower-right dessin has a trivial automorphism group.

\begin{figure}[htbp]
\centering
\includegraphics[width=135mm]{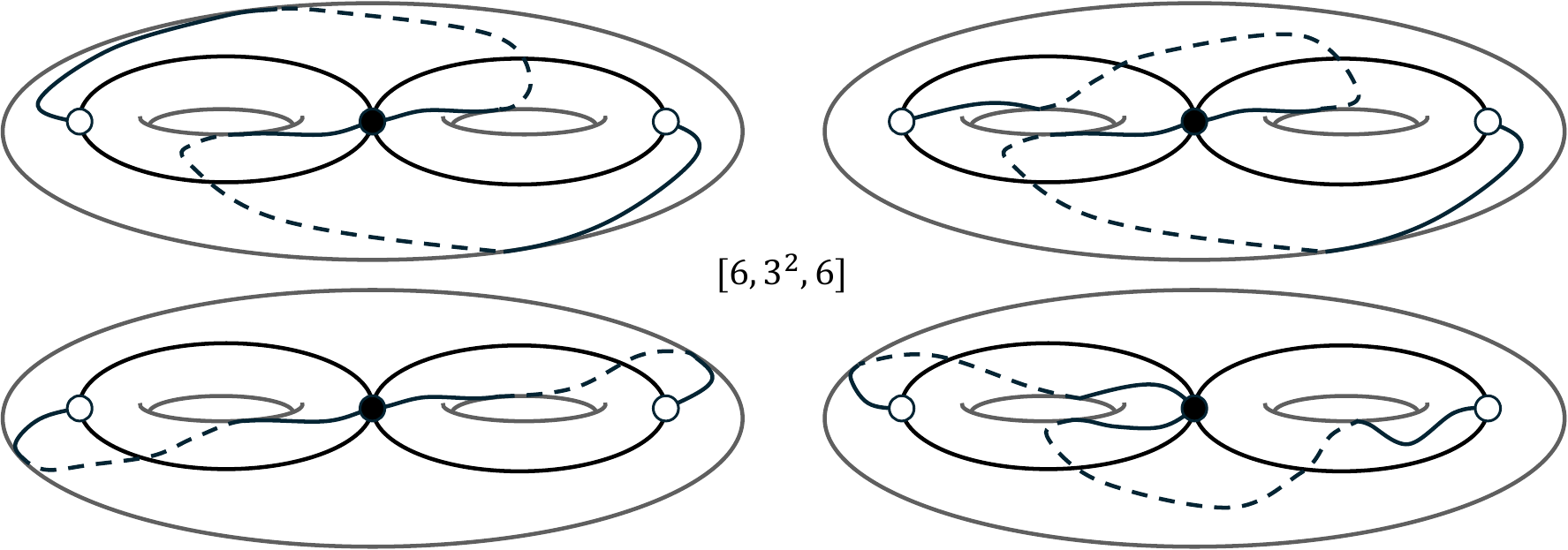}
\caption{Uniform \ddes of genus~2, degree 6}
\label{fig:genus2-n6}
\end{figure}

\subsection{Previous Work}

Examination of the regularity and automorphism groups of uniform \ddes suggests that,
as the genus increases, the proportion of regular dessins tends to decrease,
whereas the proportion of dessins with trivial automorphism groups tends to increase.

For a given uniform passport, the following questions naturally arise:
\begin{itemize}
\item Under what conditions do regular \ddes exist, and how many are there?
\item How are the automorphism groups distributed?
\item How does this distribution change with the genus?
\end{itemize}
\HL

To the best of our knowledge, relatively little attention has been paid to the study of uniform passports as 
families of dessins and to the systematic investigation of their regularity and automorphism groups.
In our previous paper \cite{Ohnishi26}, we proposed the conjectural picture shown in \tabref{tab:genusautd} and
established several supporting results.

\begin{table}[htbp]
\centering
\begin{tabular}{|l|c|c|c|}
\hline
Genus & $\Aut \msD \cong \{ 1 \}$ & $1 < \lvert \AD \rvert < n$ & $\lvert \AD \rvert = n$ (regular) \\
\hline
0 & -- \textsuperscript{\ref{itm:g0reg}} & -- \textsuperscript{\ref{itm:g0reg}} & $\checkmark$\textsuperscript{\ref{itm:g0reg}} \\ \hline
1 (with an $(n)$-cycle) & -- \textsuperscript{\ref{itm:g1triv}}  & -- \textsuperscript{\ref{itm:g1nreg}} & $\checkmark$\textsuperscript{\ref{itm:anyg}} \\ \hline
1 (without an $(n)$-cycle) & -- \textsuperscript{\ref{itm:g1triv}}  & $\checkmark$\textsuperscript{\ref{itm:g1nreg}} & $\diamond$ \\ \hline
$\ge 2$ (with $\ge 2$ $(n)$-cycles) & $\checkmark$\textsuperscript{\ref{itm:g2triv}} & $\diamond$ & $\checkmark$\textsuperscript{\ref{itm:anyg}} \\ \hline
$\ge 2$ (with $\le 1$ $(n)$-cycle) & $\checkmark$* & $\checkmark$* & $\diamond$ \\
\hline
\end{tabular}\\
\begin{flushleft}
\hspace{16pt}$n$: number of edges of the dessin d'enfant \\
\hspace{16pt}$\checkmark$: always occurs,\quad --: never occurs,\quad $\diamond$: depends on the passport.
\end{flushleft}
\HL
\caption{Distribution of automorphism groups for uniform passports (Previous)}
\label{tab:genusautd}
\end{table}

Asterisks (*) indicate statements that were conjectural at the stage of \cite{Ohnishi26}.
The numbers in parentheses refer to the corresponding items listed below.

\begin{enumerate}
\item\label{itm:g0reg} Genus~0: all uniform dessins are regular and have automorphism groups of order~$n$,
isomorphic to their monodromy groups.
\item\label{itm:g1nreg} Genus~1: a uniform passport admits a non-regular dessin if and only if it does not contain
an element of type $n^{1}$ (that is, it does not correspond to a tree).
\item\label{itm:g1triv} Genus~1: no uniform passport admits a dessin with a trivial automorphism group.
\item\label{itm:g2triv} Genus $\ge 2$: every uniform passport admits a dessin with a trivial automorphism group, and hence admits
a non-regular dessin.
Moreover, if the passport contains at most one cycle of type $(n)$, then it also admits a non-regular dessin with
a nontrivial automorphism group.
\item\label{itm:anyg} Any genus: a passport of the form $[a^{p}, b^{q}, n]$, or any permutation thereof in the tree case, admits a regular dessin if and only if $\gcd(p, q) = 1$.
\end{enumerate}

In \cite{Ohnishi26} we proved \ref{itm:g0reg}, \ref{itm:g1nreg}, \ref{itm:g1triv}, and \ref{itm:anyg},
and partially proved \ref{itm:g2triv} for the case of $[n, b^{q}, n]$ with $q \ge 1$.

These results clarify several aspects of the regularity and automorphism groups of uniform dessins.
They describe the distribution of automorphism groups in genera~0 and~1, establish a necessary and sufficient
condition for a uniform passport in the tree case  (where the passport contains $n^{1}$) to admit a regular dessin,
and provide a group-theoretic characterization of regularity for general uniform passports. For genus at least~2,
and especially for the tree case, we developed a new method for proving the existence of dessins with trivial
automorphism groups by estimating the number of relevant permutations. Combined with an analysis of the
primitivity of the monodromy group, this method yields the desired existence theorem.

\subsection{Main Results}

In this paper, we provide the following results.

\begin{enumerate}[label=(\alph*)]
\item \label{itm:g2c3triv} Extending the result corresponding to \ref{itm:g2triv} in \tabref{tab:genusautd},
we prove the following theorem in Section \ref{sec:class3}.
\end{enumerate}
\textbf{\thmref{thm:class3}.}
\textit{\thmclassthree}
\HL

For the conjectural parts of \tabref{tab:genusautd}, we found several counterexamples.
We discuss the following in Section \ref{sec:counterex}.
\begin{enumerate}[label=(\alph*),resume]
\item \label{itm:g2c8notriv}
The passport $[8^{2}, 2^{8}, 4^{4}]$ admits no \dde\ with trivial automorphism group.
This provides a counterexample to the corresponding conjecture in \tabref{tab:genusautd}
and reveals a new family of passports whose automorphism groups exhibit unexpected behavior.
\item \label{itm:g2c3onlytriv}
Conversely, the passport $[5^{3}, 5^{3}, 15]$ admits no \dde\ with a nontrivial automorphism group.
This provides a counterexample to the corresponding conjecture in \tabref{tab:genusautd}.
Generalizing this observation, we prove the following theorem.
\end{enumerate}
\textbf{\thmref{thm:trivautd}.}
\textit{\thmtrivautd}
\HL

\begin{enumerate}[label=(\alph*),resume]
\item \label{itm:g2c1} For passports of the form $[n, n, n]$ of genus at least~2, we prove the following theorem
in Section \ref{sec:class1}.
\end{enumerate}
\textbf{\thmref{thm:class1}.}
\textit{\thmclassone}
\HL

\begin{rem}
\cite{Horie24} gives a method for computing the number of isomorphism classes of dessins with two vertices
(one black and one white) and $L$ faces whose automorphism group has order $r$, where $r$ is a divisor of the
number of edges. The passport $[n,n,n]$ corresponds to the case $L=1$.

In this paper, we provide
an explicit construction of a dessin whose automorphism group has order $r$.
\end{rem}

Moreover,
\begin{enumerate}[label=(\alph*),resume]
\item \label{itm:g2c2triv}
In Section \ref{sec:class2}, using the methods developed in the proof of
\thmref{thm:trivautd}, we give an alternative proof of
\cite[Theorem~7.6]{Ohnishi26}, which states that every uniform passport
$[n,b^{q},n]$ of genus at least~2 admits a \dde\ with a trivial automorphism group.
\end{enumerate}

The current results and conjectures for uniform passports of genus at least~2 are summarized in \tabref{tab:genusautd2}.
Asterisks (*) indicate conjectural statements.
The letters in parentheses indicate the corresponding items listed above.

\begin{table}[htbp]
  \centering
\begin{tabular}{|l|c|c|c|}
\hline
Passport & $\Aut \msD \cong \{ 1 \}$ & $1 < \lvert \AD \rvert < n$ & $\lvert \AD \rvert = n$ (regular) \\
\hline
$[n, n, n]$ ($n$: prime) & $\checkmark$ & --\textsuperscript{\ref{itm:g2c1}} & $\checkmark$ \\ \hline
$[n, n, n]$ ($n$: composite) & $\checkmark$ & $\checkmark$\textsuperscript{\ref{itm:g2c1}} & $\checkmark$ \\ \hline
$[n, b^{q}, n]$ ($q \ge 2$) & $\checkmark$\textsuperscript{\ref{itm:g2c2triv}} & $\checkmark$* & $\checkmark$ \\ \hline
$[b^{q}, b^{q}, n]$ ($q \ge 2$) & $\checkmark$\textsuperscript{\ref{itm:g2c3triv}} & $\diamond$\textsuperscript{\ref{itm:g2c3onlytriv}} & -- \\ \hline
$[a^{p}, b^{q}, n]$ ($q > p \ge 2$) & $\checkmark$* & $\checkmark$* & \makecell{$\checkmark$ ($\gcd(p,q)=1$)\\--\,\, ($\gcd(p,q)\ne 1$)} \\ \hline
$[a^{p}, b^{q}, c^{r}]$ ($p, q, r \ge 2$) & $\diamond$\textsuperscript{\ref{itm:g2c8notriv}} & $\checkmark$* & $\diamond$ \\
\hline
\end{tabular}\\
\begin{flushleft}
\hspace{22pt}$n$: number of edges of the dessin d'enfant \\
\hspace{22pt}$\checkmark$: always occurs,\quad --: never occurs,\quad $\diamond$: depends on the passport.
\end{flushleft}
\HL
\caption{Distribution of automorphism groups for uniform passports (Genus $\ge 2$)}
\label{tab:genusautd2}
\end{table}
\vspace{-\baselineskip}

For future work, two directions seem particularly promising.

The first is to establish the conjectural statements summarized in
\tabref{tab:genusautd2}. In particular, it would be desirable to prove the
existence of dessins with trivial automorphism groups for passports of the
form $[a^{p}, b^{q}, n]$ ($q > p \ge 2$) of genus at least~$2$, and to determine conditions
under which a passport $[a^{p}, b^{q}, c^{r}]$ $(p,q,r\ge2)$ admits such a dessin.

The second is to obtain quantitative results on the distribution of regular
dessins and automorphism groups.

\section{Preliminaries}

The definitions and results in this chapter are based primarily on \cite{Jones16} and \cite{Adrianov20}.

\begin{definition}[\Dde]\cite[Definition~2]{Jones16}
\label{def:dessin}
A \emph{\dde}, or simply a \emph{dessin}, is a map consisting of a connected, finite, bipartite graph embedded
in a connected, compact, oriented surface without boundary.
Here, a bipartite graph is a graph whose vertices can be colored black and white in such a way that each edge
joins a black vertex to a white vertex.
\end{definition}

Since a compact Riemann surface provides a suitable surface on which a \dde can be embedded,
one can draw a dessin corresponding to a \belyi pair $(X, \B)$, as described in
Section~\ref{sec:belyi}.

\HL

A \emph{partition} $\la$ of a positive integer $n$, denoted by $\la \vdash n$, is a multiset of
positive integers whose sum is $n$, where the order of the parts is irrelevant.
 
\begin{definition}[Passport of a dessin]\cite[Definition~2.10]{Adrianov20}
\label{def:passport}
Let $n$ be the number of edges of a \dde.
The triple $[\lambda_{0}, \lambda_{1}, \lambda_{\infty}]$ of partitions $\lambda_{0}, \lambda_{1}, \lambda_{\infty} \vdash n$,
which correspond respectively to the valencies of the black vertices, the white vertices, and
the faces of the dessin, is called a $\emph{passport}$ of the dessin.
\end{definition}

Both dessins in \figref{fig:dessins-ex} have passport $[32,311,41]$,
while the dessins in \figref{fig:regular} have passport
$[4^{2},2^{4},4^{2}]$.
Throughout this paper, we use the standard abbreviated notation for partitions:
for example, $32=(3,2)$, $311=(3,1,1)$, $41=(4,1)$, $4^{2}=(4,4)$, and $2^{4}=(2,2,2,2)$.

Since $\lambda_{0}, \lambda_{1}, \lambda_{\infty}$ are partitions of $n$, we have

\begin{align}
\label{eq:lambdan}
\lvert \lambda_{0} \rvert = \lvert \lambda_{1} \rvert = \lvert \lambda_{\infty} \rvert = n,
\end{align}
where $\lvert \la \rvert$ denotes the sum of the parts of the partition $\la$. 

The numbers of vertices, faces, and edges are $l(\lambda_{0}) + l(\lambda_{1})$,
$l(\lambda_{\infty})$, and $n$, respectively, where $l(\la)$ denotes the number of parts
of the partition $\la$.
Therefore, the genus $g$ of the underlying curve $X$ satisfies

\begin{align}
l(\lambda_{0}) + l(\lambda_{1}) + l(\lambda_{\infty}) - n = 2 - 2g,
\end{align}
and hence

\begin{align}
\label{eq:lambdag}
g = \f{n - (l(\lambda_{0}) + l(\lambda_{1}) + l(\lambda_{\infty}))}{2} + 1.
\end{align}
Since $g$ is a non-negative integer, it follows that

\begin{align}
\label{eq:lambdal}
l(\lambda_{0}) + l(\lambda_{1}) + l(\lambda_{\infty}) \le  n + 2, \q
l(\lambda_{0}) + l(\lambda_{1}) + l(\lambda_{\infty}) \equiv n \npmod{2}.
\end{align}

Note that not every triple of partitions satisfying \eqref{eq:lambdan} and \eqref{eq:lambdal}
is realized by a \dde.
For example, although $[2^{2}, 2^{2}, 31]$ formally satisfies these conditions and would yield genus~$0$,
there exists no dessin with this passport.
Similarly, the passport $[3^{2}, 3^{2}, 42]$, which would correspond to genus~$1$, admits no dessin.

These facts can be proved by showing that there exists no corresponding monodromy group
(see \defref{def:monog}) in each case.

\begin{definition}[Uniform passports and dessins]\cite[Remark~3.2]{Jones16}
The passport of a \dde given by
$[\lambda_{0}, \lambda_{1}, \lambda_{\infty}] = [a_{1}\cdots a_{p},\ b_{1}\cdots b_{q},\ c_{1}\cdots c_{r}]$
is called \emph{uniform} if
\begin{align}
a_{1} = \cdots = a_{p}, \quad b_{1} = \cdots = b_{q}, \quad c_{1} = \cdots = c_{r},
\end{align}
\ie if the passport takes the form $[a^{p}, b^{q}, c^{r}]$.

A \dde is called \emph{uniform} if it has a uniform passport.
\end{definition}

For a uniform passport $[a^{p}, b^{q}, c^{r}]$ with $n = pa = qb = rc$ and the genus~$g$ of the
underlying curve $X$ corresponding to a dessin with this passport, by \eqref{eq:lambdan} we have

\begin{align}
\label{eq:genus-g}
p + q + r - n = 2 - 2g,
\end{align}
and hence

\begin{align}
\label{eq:genus}
g = \f{n-(p+q+r)}{2} + 1.
\end{align}

\begin{definition}[Monodromy group of a \dde]\cite[2.1.1]{Jones16}
\label{def:monog}
Define two permutations $x$ and $y$ acting on the set of edges $E$ of a \dde $\msD$ as follows.
For each edge $e \in E$, define $x \cdot e$ and $y \cdot e$ to be the next edges around the unique black vertex and
the unique white vertex incident to $e$, respectively, following the counterclockwise orientation.

The \emph{monodromy group} of $\msD$ is the subgroup $G = \gen{x, y}$ generated by $x$ and $y$ in
the symmetric group $\Sym(E)$ of all permutations of $E$.
\end{definition}

Since a \dde is a connected graph, it follows that any edge in $E$ can be mapped to any other edge by the action of $G$.
Therefore, the monodromy group $G$ acts transitively on $E$.

\begin{rem}
Throughout this paper, permutations act on the left, and products are composed from right to left,
\ie $(xy) \cdot e = x \cdot (y \cdot e)$ for an edge $e$.
\end{rem}

Another important observation concerning the monodromy group is that, in addition to $x$ and $y$ encoding
the cycles of the black and white vertices, respectively, the permutation $z = (xy)^{-1}$ encodes the cycles
 corresponding to the faces.
In fact, for each face, half of the edges incident to it form a cycle of $z$, while the remaining edges belong to cycles of
$z$ corresponding to the neighboring faces.

It is known that if a group $G$ generated by two elements acts transitively on a set of
edges $E$, then there exists a \dde whose monodromy group is isomorphic to $G$\cite[Theorem~3.6]{Scodro24}.

Therefore, studying groups that act transitively is essential for investigating the properties of \ddes and, consequently, of algebraic curves.

\begin{definition}[Regular dessins]\cite[2.1.2]{Jones16}
\label{def:regular}
A \dde is called \emph{regular} if its monodromy group acts freely (that is, semiregularly) on the set of its edges.
\end{definition}

This implies that the monodromy group of a regular dessin acts freely and transitively --- hence regularly ---
on its edges.

The following criterion relates the order of the monodromy group to the number of edges.

\begin{lemma}
\label{lem:order-n}
A \dde with $n$ edges is regular if and only if the order of its monodromy group is $n$.
\end{lemma}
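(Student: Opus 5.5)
The statement to prove is Lemma~\ref{lem:order-n}: a dessin with $n$ edges is regular if and only if its monodromy group has order $n$. The plan is to reduce it to the orbit--stabilizer theorem for the action of the monodromy group $G=\gen{x,y}$ on the edge set $E$, with $\vt{E}=n$. The one fact I will use is the observation after \defref{def:monog}: since the underlying graph of a dessin is connected, $G$ acts transitively on $E$.

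First I fix an edge $e\in E$ and let $G_{e}$ be its stabilizer in $G$. Transitivity gives a single orbit, so orbit--stabilizer yields
\begin{align}
\vt{G} = \vt{E}\cdot\vt{G_{e}} = n\,\vt{G_{e}} .
\end{align}
Hence $\vt{G}=n$ holds exactly when $G_{e}$ is trivial.

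Next I relate the stabilizers of different edges. For any other edge $e'$, transitivity gives some $g\in G$ with $g\cdot e=e'$. Then $G_{e'}=gG_{e}g^{-1}$, so all point stabilizers are conjugate and in particular have the same order. Therefore $G_{e}=\{1\}$ for one edge if and only if $G_{e'}=\{1\}$ for every edge $e'$. The latter condition is precisely that $G$ acts freely (semiregularly) on $E$, which is the definition of regularity in \defref{def:regular}.

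Putting the two steps together: $\vt{G}=n$ if and only if $G_{e}=\{1\}$, if and only if $G$ acts freely on $E$, if and only if the dessin is regular. There is no real obstacle. The only point needing care is that transitivity is what lets me pass from ``the stabilizer of one edge is trivial'' to ``the action is free''; this is exactly where connectedness of the dessin is used. Finiteness of $G$ is automatic because $G\le\Sym(E)$.
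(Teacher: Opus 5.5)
Your orbit--stabilizer argument is correct and complete. Transitivity (from connectedness) gives $\vt{G}=n\,\vt{G_e}$, and conjugacy of the point stabilizers turns triviality of one stabilizer into freeness of the whole action. The paper does not prove the lemma itself but cites \cite[Lemma~2.7]{Ohnishi26}; your argument is the standard proof of this fact and has no gaps.
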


\begin{proof}
See \cite[Lemma~2.7]{Ohnishi26}.
\end{proof}

\vspace{-9pt}
\nopagebreak

\begin{definition}[Automorphism group of a \dde]\cite[2.1.2]{Jones16}
For a \dde $\msD$, we define its \emph{automorphism} to be a permutation of the set $E$ of $\msD$ which preserves the
cyclic order of edges around each vertex, that is, which commutes with $x$ and $y$, or equivalently, commutes with $G$.
Thus we can define an \emph{automorphism group} of $G$ as the centralizer:

\begin{align}
\Aut \msD \ceq C_{\Sym(E)}(G) &= \{ c \in \Sym(E) \mid cg = gc\ \text{for all}\ g \in G \} \\
\label{eq:cxcy}
&= \{ c \in \Sym(E) \mid cx = xc,\ cy = yc \},
\end{align}
where $\Sym(E)$ is the symmetric group of all permutations of $E$.

When $\Aut \msD \cong \{1 \}$, it is said that $\msD$ has a \emph{trivial} automorphism group.
\end{definition}

Since $\gen{x, y} = \gen{x, z} = \gen{y, z}$, where $z = (xy)^{-1}$,
both the monodromy group and the automorphism group are invariant under any permutation of black vertices,
white vertices, and faces.

The automorphism group of a \dde has the following properties:

\begin{itemize}
\item $\AD$ acts freely on the edges of $\msD$.
\item $\lvert \AD \rvert$ divides the number of edges.
\item If $\lvert \AD \rvert$ equals the number of edges, then $\AD \cong G$.
\end{itemize}

The following proposition is a basic result describing the relationship between regularity and automorphism groups.

\begin{prop}
\label{prop:regaut}
A \dde $\msD$ is regular if and only if its monodromy group $G$ is isomorphic to $\AD$.
\end{prop}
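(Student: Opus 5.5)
The plan is to compare orders. We use \lemref{lem:order-n} and the bullet facts listed after the definition of $\AD$. We also use the standard fact that a transitive group's centralizer acts semiregularly.

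For the forward direction, suppose $\msD$ is regular. Then $G$ acts regularly on $E$, so $\vt{G}=n$ by \lemref{lem:order-n}. We want to show $\vt{\AD}=n$ as well, and then appeal to the third bullet, which gives $\AD\cong G$.

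To show this directly, fix an edge $e_{0}\in E$. Since the action is regular, every edge is $g\cdot e_{0}$ for a unique $g\in G$. This lets us identify $E$ with $G$, where $G$ acts by left multiplication. Under this identification, the centralizer of the left regular representation in $\Sym(G)$ is the right regular representation $\{h\mapsto hk^{-1}\}$. That representation is isomorphic to $G$. Hence $\AD=C_{\Sym(E)}(G)\cong G$.

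For the converse, suppose $G\cong\AD$. Then $\vt{G}=\vt{\AD}$. By the second bullet, $\vt{\AD}$ divides $n$, so $\vt{G}\le n$. On the other hand, $G$ is transitive on $E$, so the orbit--stabilizer theorem gives $\vt{G}=n\cdot\vt{G_{e}}\ge n$. Therefore $\vt{G}=n$, and \lemref{lem:order-n} shows $\msD$ is regular.

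The only step needing care is the identification of the centralizer of the left regular action with the right regular action. Concretely, take $c$ commuting with $G$ and write $c(e_{0})=k\cdot e_{0}$. Then $c(g\cdot e_{0})=g\cdot c(e_{0})=gk\cdot e_{0}$, so $c$ is determined by $k$. Conversely, every $k\in G$ gives such a commuting map. The map $c\mapsto k^{-1}$ is then a group isomorphism. Everything else is routine order counting using results already stated.
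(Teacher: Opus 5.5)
Your proof is correct and complete. The paper gives no argument of its own here: it cites \cite[Theorem~2.1]{Jones16}, which is the general description of the centraliser of a transitive permutation group, $C_{\Sym(E)}(G)\cong N_G(H)/H$, where $H$ is an edge stabiliser. Both directions follow from that formula. If $H=1$, then $\AD\cong G$. Conversely, $\vt{N_G(H)/H}=\vt{G}$ forces $H=1$.

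You do not use the general formula. For the forward direction you treat only the case $H=1$, computing by hand that the centraliser of the left regular action is the right regular action. Your check that $c\mapsto k^{-1}$ is a homomorphism is right: $k\mapsto c_k$ is an anti-homomorphism, so the inversion is needed. For the converse you count orders instead. Semiregularity of $\AD$ gives $\vt{G}=\vt{\AD}\le n$, transitivity gives $\vt{G}\ge n$, and \lemref{lem:order-n} finishes.

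Your route is elementary and self-contained, apart from the bullet facts listed after the definition of $\AD$, and you justify the one you need correctly. The cited theorem takes a little more set-up, but it describes $\AD$ for every dessin, not only regular ones. It is also where those bullet facts come from.

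Your remark about appealing to the third bullet is unnecessary, since the direct computation already proves that statement in the regular case.
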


\begin{proof}
See \cite[Theorem~2.1]{Jones16}.
\end{proof}

This also implies that a \dde $\msD$ is regular if and only if $\lvert \AD \rvert$ equals the number of edges.

Note that regularity does not imply $G = \Aut \msD$;
these groups act on the edges of the dessin in different ways.

\begin{prop}
\label{prop:reguni}
If a \dde is regular, then it has a uniform passport.
\end{prop}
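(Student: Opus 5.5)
We prove that a regular dessin has a uniform passport using the two facts just established: the monodromy group $G=\gen{x,y}$ acts regularly on $E$, and by \propref{prop:regaut} the automorphism group $\AD = C_{\Sym(E)}(G)$ also acts transitively on $E$, since $\vt{\AD}=n$ and the action is free. The plan is to show that every automorphism carries the cycles of $x$, $y$, $z$ to cycles of the same length. Transitivity of $\AD$ on edges then forces all cycles of each permutation to have equal length.

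First, let $c\in\AD$ and let $e\in E$. Since $c$ commutes with $x$, we have $c(x^{k}\cdot e)=x^{k}\cdot(c\cdot e)$ for all $k$. Hence $c$ maps the $\gen{x}$-orbit of $e$ bijectively onto the $\gen{x}$-orbit of $c\cdot e$. Indeed, $x^{k}\cdot e=e$ holds if and only if $x^{k}\cdot(c\cdot e)=c\cdot e$, because $c$ is a bijection. So the cycle of $x$ through $e$ and the cycle through $c\cdot e$ have the same length. The same argument applies verbatim to $y$. It also applies to $z=(xy)^{-1}$, since $c$ commuting with $x$ and $y$ implies that $c$ commutes with $z$.

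Next, given two edges $e,e'$, transitivity of $\AD$ gives some $c$ with $c\cdot e=e'$. Therefore every edge lies in an $x$-cycle of the same length $a$, so $\lambda_{0}=a^{p}$. Likewise $\lambda_{1}=b^{q}$ and $\lambda_{\infty}=c^{r}$, which means the passport is uniform.

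The only genuinely nontrivial input is the transitivity of $\AD$. This comes from \propref{prop:regaut} together with the freeness of the automorphism action, or more directly from \lemref{lem:order-n}. If one prefers a self-contained argument, the standard route identifies $E$ with $G$ via the regular action. Under this identification, $G$ acts by left multiplication, and right multiplications $g\mapsto gh$ commute with it and act transitively. I expect this step to be the main obstacle, and the rest is routine.
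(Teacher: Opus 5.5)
Your proof is correct. By \propref{prop:regaut} and \lemref{lem:order-n} we have $\vt{\AD}=n$, and a free action of a group of order $n$ on an $n$-element set is transitive. Your stabilizer comparison ($x^{k}\cdot e=e \iff x^{k}\cdot(c\cdot e)=c\cdot e$) then shows that each $c\in\AD$ carries the cycle of $x$ (resp.\ $y$, $z$) through $e$ onto a cycle of the same length through $c\cdot e$.

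The paper gives no argument of its own here; it only cites \cite[Proposition~4.42]{Girondo12} and \cite[Proposition~2.10]{Ohnishi26}. The useful comparison is therefore with the most direct proof, which uses nothing but the definition of regularity (freeness of $G$). If $x^{k}\cdot e=e$ for a single edge $e$, then $x^{k}\in G$ has a fixed point, so freeness forces $x^{k}=1$. Hence every cycle of $x$ has length exactly $\ord(x)$, so $\lambda_{0}$ consists of $n/\ord(x)$ parts equal to $\ord(x)$. The same argument applied to $y$ and $z=(xy)^{-1}$ handles $\lambda_{1}$ and $\lambda_{\infty}$.

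This route needs neither transitivity of $G$ nor \propref{prop:regaut}. It therefore removes the step you flagged as the main obstacle, and it identifies the valencies as the orders of $x$, $y$, $z$ in $G$. Your self-contained fallback already contains it: once $E$ is identified with $G$, the $x$-cycle through $g$ is $\{x^{k}g\}$, whose size is visibly $\ord(x)$.

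Your route instead goes through the deeper fact that $\AD$ is edge-transitive. What it buys is the geometric picture: deck transformations move any edge to any other while preserving all valencies. It also applies verbatim in treatments that define regularity by edge-transitivity of the automorphism group rather than by freeness of the monodromy group.
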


\begin{proof}
See \cite[Proposition~4.42]{Girondo12} and \cite[Proposition~2.10]{Ohnishi26}.
\end{proof}

The converse of this proposition does not hold in general.
This shows that regularity exhibits a higher degree of symmetry than uniformity.

\begin{exa}
The uniform passport $[4^{2}, 2^{4}, 4^{2}]$ (genus~1) corresponds to the two dessins shown in
Section~\ref{sec:regaut}, \figref{fig:regular}.
The dessin on the left is regular, and both its monodromy group and its automorphism group are
isomorphic to $C_{4} \times C_{2}$, the direct product of cyclic groups of orders 4 and 2, respectively.
In contrast, the dessin on the right is not regular; its monodromy group is isomorphic to $(C_{4} \times C_{2}) \rtimes C_{2}$
and has order~16, whereas its automorphism group has order~4 and is isomorphic to $C_{2} \times C_{2}$.
\end{exa}

\section{Counting Arguments for Trivial Automorphism Groups}
\label{sec:estimation}

\subsection{Settings}

A passport $[\la_{0}, \la_{1}, \la_{\infty}]$ is called a \emph{tree}
if at least one of the $\la_{i}$ is $n^{1}$. A \dde is called a tree if its passport is a tree\footnote{Strictly speaking, a dessin
is a tree if it has a single face. However, due to the natural $S_{3}$-symmetry permuting black vertices, white vertices,
and faces, we use the term tree passport for any passport containing $n^{1}$.}.

In this section, we consider uniform tree passports $[n, b^{q}, a^{p}]$ with $n = pa = qb$.
That is, we assume that $x$, $y$, and $z=(xy)^{-1}$ have cycle types
$(n)$, $(b^{q})$, and $(a^{p})$, respectively.

Since the monodromy group and the automorphism group are invariant under permutations of
$\la_{0}$, $\la_{1}$, and $\la_{\infty}$, it suffices to consider passports of this form;
the case $[a^{p}, b^{q}, n]$ follows by symmetry.

Let $S_n$ be the symmetric group on $\{1,\ldots,n\}$,
and fix an $n$-cycle $x\in S_n$, for example $x=\sigman$.
We define the following subsets of $S_n$:
\begin{align}
\label{eq:tncd}
\begin{aligned}
&T(b,q) \ceq \{ y\in S_{n} \mid y \text{ has cycle type } (b^{q}) \}, \\
&N(b,q,a) \ceq \{ y \in T(b,q) \mid (xy)^{-1} \text{ has cycle type } (a^{p}) \}, \\
&C(b,q) \ceq \{ y \in T(b,q) \mid \AD \ncong \{1\} \}, \\
&D(n) \ceq \{ y \in S_{n} \mid \AD \ncong \{1\} \},
\end{aligned}
\end{align}
where $\msD$ denotes the dessin corresponding to the monodromy group $\gen{x, y}$.

\figref{fig:tncd} illustrates the relationships among these sets.
The hatched region corresponds to $C(b, q)$.

\begin{figure}[htbp]
\centering
\includegraphics[width=66mm]{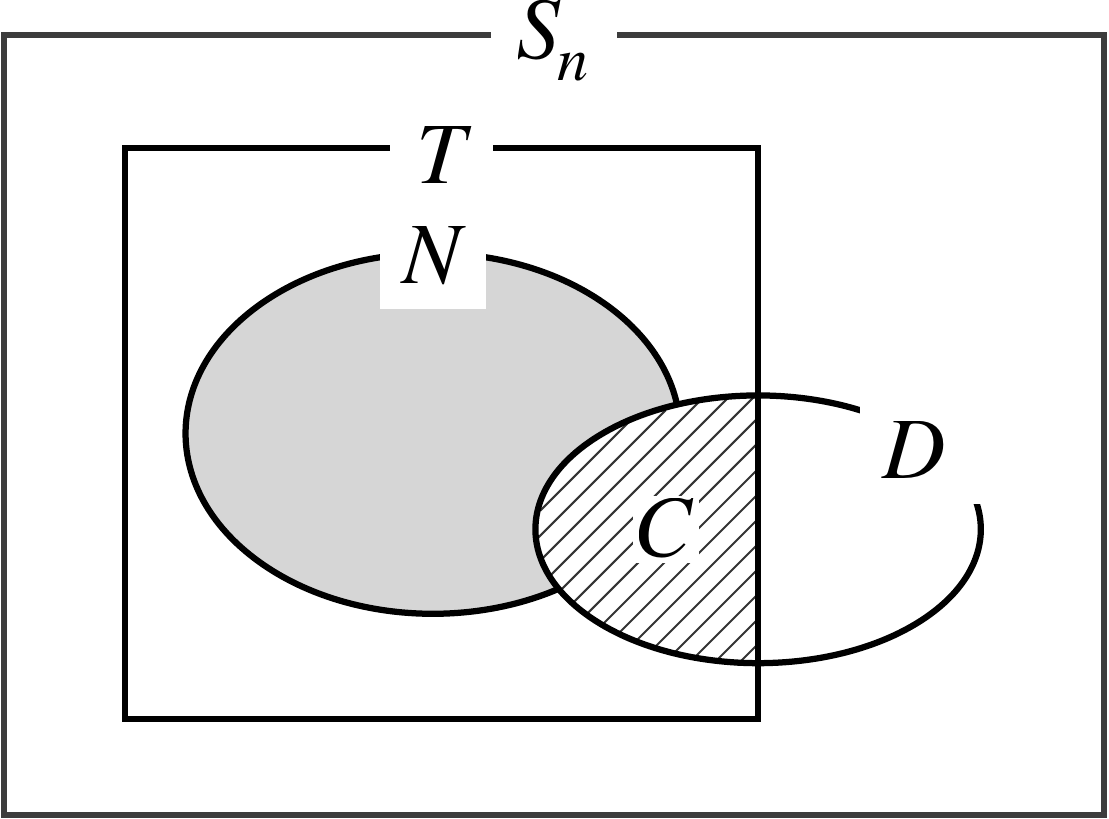} 
\caption{The sets related to the passport $[n, b^{q}, a^{p}]$}
\label{fig:tncd}
\end{figure}

The existence of a dessin $\msD$ with passport $[n, b^{q}, a^{p}]$ and trivial automorphism group
is equivalent to
\begin{align}
N \cap C^{c} \ne \emptyset
\end{align}
(the gray region in \figref{fig:tncd}). Since $C  = D \cap T$ and $N \subset T$, it is also equivalent to
\begin{align}
N \cap D^{c} \ne \emptyset.
\end{align}
Thus, to prove the existence of such a dessin,
it suffices to show that
\begin{align}
\vt{N} > \vt{D},
\end{align}
or equivalently,
\begin{align}
\f{\vt{N}}{\vt{T}} > \f{\vt{D}}{\vt{T}}.
\end{align}

\begin{prop}
\label{prop:tbq}
The number of elements in $S_{n}$ of cycle type $(b^{q})$ $(n = bq)$ is given by

\begin{align}
\vt{T(b, q)} = \f{n!}{b^{q}q!}.
\end{align}
\end{prop}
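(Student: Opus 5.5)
We will count permutations of cycle type $(b^{q})$ with a surjection followed by an exact count of fibres. Write the elements of $\{1,\ldots,n\}$ as an ordered list $(a_{1},a_{2},\ldots,a_{n})$; there are $n!$ such lists. To each list we associate the permutation
\begin{align}
\pi = (a_{1}\ a_{2}\ \ldots\ a_{b})(a_{b+1}\ \ldots\ a_{2b})\cdots(a_{(q-1)b+1}\ \ldots\ a_{n}),
\end{align}
obtained by cutting the list into $q$ consecutive blocks of length $b$ and reading each block as a cycle. Since $n=bq$, the blocks partition $\{1,\ldots,n\}$, so $\pi$ is a product of $q$ disjoint $b$-cycles and lies in $T(b,q)$. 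Conversely, any $y\in T(b,q)$ arises this way: write out its $q$ cycles in some order and some starting point each, and concatenate. So the map from lists to $T(b,q)$ is surjective.

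The key step is to show that every fibre has exactly $b^{q}q!$ elements. Two lists give the same permutation if and only if they differ by reordering the $q$ blocks among themselves ($q!$ choices) and cyclically rotating the entries inside each block ($b$ choices per block, hence $b^{q}$ in total). This follows from the uniqueness of the disjoint cycle decomposition: the set of cycles of $\pi$ is determined by $\pi$, and a $b$-cycle has exactly $b$ cyclic representations as a sequence. These two operations commute and generate a group of order $b^{q}q!$, which acts freely on lists because all entries are distinct. The fibres are therefore exactly the orbits, each of size $b^{q}q!$, and dividing gives $\vt{T(b,q)} = n!/(b^{q}q!)$.

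Equivalently, one could use the orbit–stabilizer theorem for the conjugation action of $S_{n}$ on $T(b,q)$, which is a single conjugacy class. The centralizer of a fixed element is $C_{b}\wr S_{q}$, of order $b^{q}q!$. The only point requiring care is the freeness of the action, that is, that no nontrivial block reordering combined with rotations fixes a list. This is immediate because distinct positions carry distinct symbols, so I do not expect any real obstacle.
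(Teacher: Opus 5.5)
Your proof is correct. The paper gives no argument of its own here; it simply refers to \cite[Proposition~3.1]{Ohnishi26}. Within the present paper the most direct route is the one in your last paragraph: Proposition~\ref{prop:Zg} applied to the type $(b^{q})$ gives $\lvert Z_{y}\rvert = b^{q}q!$, and orbit--stabilizer for the conjugation action on the single class $T(b,q)$ then yields $n!/(b^{q}q!)$. Your list-counting argument is self-contained and uses the same bookkeeping (reorder the cycles, rotate each one) that underlies Proposition~\ref{prop:Zg}.

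One small correction: block reorderings and within-block rotations do not commute. Rotating block $1$ and then swapping blocks $1$ and $2$ is not the same as doing these in the other order. Together they generate the wreath product $C_{b}\wr S_{q} = C_{b}^{q}\rtimes S_{q}$, which still has order $b^{q}q!$ because $C_{b}^{q}$ is normal and meets $S_{q}$ trivially, so your conclusion stands. You can also avoid the group altogether: a list in the fibre over $y$ is determined by an ordering of the $q$ cycles of $y$ together with a starting point in each, and distinct choices give distinct lists. Hence each fibre has exactly $q!\,b^{q}$ elements.
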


\begin{proof}
See \cite[Proposition~3.1]{Ohnishi26}.
\end{proof}
\OL

\subsection{Lower Bound for \textit{N}}

A useful tool for computing the values of $\vt{N}$ is the following theorem from \cite{Goupil98}.

\begin{thm}
\label{thm:goupil}
Let $\lambda = (\lambda_{1}, \dotsc, \lambda_{l})$ and $\mu = (\mu_{1}, \dotsc, \mu_{m})$
be partitions of $n$.
Define the genus associated with the pair $(\lambda, \mu)$ by

\begin{align}
g = \frac{n-(l+m)+1}{2},
\end{align}
and assume that $g \in \Zz$.

Let $C_\lambda$ and $C_\mu$ denote the conjugacy classes in $S_n$
consisting of permutations of cycle types $\lambda$ and $\mu$,
respectively. 
Let $c_{\lambda \mu}^{n}$ denote the number of solutions $(\sigma, \rho) \in C_{\lambda} \times C_{\mu}$
to the equation $\sigma \rho = \pi$,
where $\pi$ is a fixed $n$-cycle in $S_{n}$.
Then $c_{\lambda \mu}^{n}$ is given by

\begin{align}
c_{\la \mu}^{n} = \f{n}{z_{\la}z_{\mu}2^{2g}}\sum_{\substack{g_{1},g_{2}\ge 0\\ g_{1}+g_{2}=g}}(l+2g_{1}-1)! (m+2g_{2}-1)!
\sum_{\substack{(i_{1},\dotsc,i_{l})\vDash g_{1}\\(j_{1},\dotsc,j_{m})\vDash g_{2}}}
\prod_{k=1}^{l}\binom{\la_{k}}{2i_{k}+1}\prod_{k=1}^{m}\binom{\mu_{k}}{2j_{k}+1}.
\end{align}
Here $p \vDash n$ denotes a composition of $n$, that is, a finite sequence of non-negative integers
summing to $n$, where the order of the terms matters. We adopt the convention that $\binom{a}{b}=0$ if $b>a$.
Moreover, for a partition $\lambda = 1^{\A_{1}}\cdots n^{\A_{n}}$, where $\A_{i}$ denotes the multiplicity of $i$, we define
$z_{\lambda} = \prod_{i} \A_{i}! \, i^{\A_{i}}$.
\end{thm}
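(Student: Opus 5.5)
This is the Goupil--Schaeffer formula, and in the paper it would simply be quoted from \cite{Goupil98}. If a self-contained argument were wanted, I would derive it from character theory rather than reproduce their bijective proof. The starting point is the Frobenius counting formula: the number of pairs $(\sigma,\rho)\in C_\lambda\times C_\mu$ with $\sigma\rho=\pi$ is
\begin{align}
c^{n}_{\lambda\mu}=\frac{\vt{C_\lambda}\,\vt{C_\mu}}{n!}\sum_{\chi}\frac{\chi(\lambda)\chi(\mu)\overline{\chi(\pi)}}{\chi(1)},
\end{align}
with $\vt{C_\lambda}=n!/z_\lambda$ and $\vt{C_\mu}=n!/z_\mu$. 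The key simplification is that, by the Murnaghan--Nakayama rule, $\chi(\pi)\neq 0$ for an $n$-cycle only when $\chi$ is a hook character $\chi_k=\chi^{(n-k,1^k)}$. In that case $\chi_k(\pi)=(-1)^k$ and $\chi_k(1)=\binom{n-1}{k}$. So the sum collapses to a single sum over $0\le k\le n-1$.

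Next I would compute the hook characters on $C_\lambda$ by a generating function. Removing the rim hooks of lengths $\lambda_1,\dots,\lambda_l$ one at a time from hooks gives the standard identity
\begin{align}
\sum_{k=0}^{n-1}\chi_k(\lambda)\,t^k=\frac{\prod_{i=1}^{l}\bigl(1-(-t)^{\lambda_i}\bigr)}{1+t},
\end{align}
and likewise for $\mu$.

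To handle the denominator $\binom{n-1}{k}$, I would use the Beta integral $1/\binom{n-1}{k}=n\int_0^1 u^k(1-u)^{n-1-k}\,du$. This turns the hook sum into $n\int_0^1$ of a coefficient extraction involving the two generating functions above. The prefactor $n$ in the formula, together with the factor $n!^2/(n!\,z_\lambda z_\mu)$, then accounts for the outer constant $n/(z_\lambda z_\mu)$.

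The main obstacle is the final algebraic step: turning this integral into the double sum over $g_1+g_2=g$ with the odd binomials $\binom{\lambda_k}{2i_k+1}$. My plan for it is as follows.
\begin{itemize}
\item Substitute $t=\frac{1-s}{1+s}$, equivalently pass to the variables $e^{\pm\theta}$. Then each factor $1-(-t)^{\lambda_i}$, up to a power of $(1+s)$, becomes $(1+s)^{\lambda_i}-(-1)^{\lambda_i}(1-s)^{\lambda_i}$. This expands as $2\sum_j\binom{\lambda_i}{2j+1}s^{2j+1}$ in the odd-$\lambda_i$ case, with the analogous even-index expansion otherwise.
\item The product over $i$ is then a generating function in which $s^{l+2g_1}$ carries exactly the sum over compositions $(i_1,\dots,i_l)\vDash g_1$ of $\prod\binom{\lambda_k}{2i_k+1}$; the same holds for $\mu$.
\item The remaining integral over $u$, combined with the coefficient extraction, should produce Beta-type values of the form $(l+2g_1-1)!\,(m+2g_2-1)!/(n-1)!$, constrained by $l+2g_1+m+2g_2=n+1$. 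This constraint is exactly the definition of $g$.
\item The powers of $2$ coming from the expansion should collect into $2^{-2g}$.
\end{itemize}
I expect bookkeeping of signs and parity in this step to be the delicate point. As a sanity check I would verify the resulting formula against small cases, for instance $g=0$, where it must reduce to the known planar count $n\,(l-1)!\,(m-1)!/(z_\lambda z_\mu)$.
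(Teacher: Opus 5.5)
The paper does not prove this statement; it quotes it from Goupil--Schaeffer, so your character-theoretic derivation is a genuinely different route. Its first half is sound. The Frobenius formula, the vanishing of non-hook characters on $\pi$, $\chi_k(\pi)=(-1)^k$, $\chi_k(1)=\binom{n-1}{k}$ and the hook generating function together give $c^{n}_{\lambda\mu}=\frac{n!}{z_\lambda z_\mu}\sum_{k=0}^{n-1}(-1)^k a_kb_k/\binom{n-1}{k}$, with $a_k=\chi_k(\lambda)$ and $b_k=\chi_k(\mu)$.

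The step you flag as the main obstacle, however, is the whole content of the theorem, and your outline of it does not work as stated. First, the substitution has the wrong sign. With $t=\frac{1-s}{1+s}$ and $\lambda_i$ odd, $(1+s)^{\lambda_i}-(-1)^{\lambda_i}(1-s)^{\lambda_i}=(1+s)^{\lambda_i}+(1-s)^{\lambda_i}=2\sum_j\binom{\lambda_i}{2j}s^{2j}$. So you have the parity reversed, and a parity-dependent mixture can never produce the uniform factors $\binom{\lambda_k}{2i_k+1}$. The correct choice is $t=\frac{s-1}{s+1}$. Then $1-(-t)^{\lambda_i}=\bigl((1+s)^{\lambda_i}-(1-s)^{\lambda_i}\bigr)/(1+s)^{\lambda_i}$ for every part and $1+t=\frac{2s}{1+s}$, so
\[
\sum_{k=0}^{n-1}a_k(s-1)^k(s+1)^{n-1-k}=2^{l-1}\sum_{g_1\ge 0}\Lambda_{g_1}s^{\,l-1+2g_1},\qquad \Lambda_{g_1}=\sum_{(i_1,\dotsc,i_l)\vDash g_1}\prod_{k=1}^{l}\binom{\lambda_k}{2i_k+1},
\]
and similarly $M_{g_2}$ for $\mu$. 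The shift to the exponent $l-1+2g_1$, caused by dividing by $1+t$, is what later produces $(l+2g_1-1)!$.

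Second, and more seriously, the Beta integral only trades the weight $1/\binom{n-1}{k}$ for an integral. It still leaves a coefficientwise product of the two generating functions. Nothing in your plan explains why that weighted sum can be recomputed in the $s$-basis with the new weights $1/\binom{n-1}{l+2g_1-1}$; ``should produce Beta-type values'' is exactly the unproved claim.

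The missing idea is invariance of a pairing. Put $N=n-1$, and define $\langle x^ky^{N-k},x^{N-k}y^k\rangle=(-1)^k/\binom{N}{k}$, with all other pairs of monomials pairing to zero. Your sum is then $\langle f,h\rangle$ for $f=\sum_ka_kx^ky^{N-k}$ and $h=\sum_kb_kx^{N-k}y^k$. Checking on powers of linear forms gives $\langle(ax+by)^N,(cx+dy)^N\rangle=(bc-ad)^N$. Hence a linear change of variables multiplies the pairing by the $N$-th power of its determinant. Now substitute $x=S-T$, $y=S+T$, which has determinant $2$:
\begin{itemize}
\item the homogenized display above, and its analogue for $h$ obtained via $T\mapsto -T$, turn $f$ and $h$ into combinations of $S^{j}T^{N-j}$ with $j=l-1+2g_1$ and $j=m-1+2g_2$ respectively;
\item the pairing only matches $j$ with $N-j$, which forces $g_1+g_2=g$;
\item the signs cancel because $N+l-m$ is even;
\item the powers of $2$ combine as $2^{l-1}2^{m-1}2^{-N}=2^{-2g}$.
\end{itemize}
This gives $\sum_k(-1)^ka_kb_k/\binom{n-1}{k}=2^{-2g}\sum_{g_1+g_2=g}\Lambda_{g_1}M_{g_2}/\binom{n-1}{l+2g_1-1}$, which is the theorem after multiplying by $n!/(z_\lambda z_\mu)$.

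Finally, your $g=0$ check value is wrong. The planar count is $n\,(l-1)!\,(m-1)!\prod_k\lambda_k\prod_k\mu_k/(z_\lambda z_\mu)$: for $\lambda=(3)$, $\mu=(1^3)$ it equals $1$, whereas your expression gives $1/3$.
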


\begin{proof}
See \cite[Theorem~2.1]{Goupil98}.
\end{proof}

Using this theorem, we proved the following theorem:

\begin{thm}
\label{thm:MN}
Let $n \ge 3$ be an integer such that $n = bq$, with $b \ge 2$, and assume that $n \equiv q \pmod{2}$.
Fix $x = \sigman \in S_{n}$. Then

\begin{align}
\f{N(b, q, n)}{T(b, q)} \ge \f{2}{n+2}.
\end{align}
In particular, the inequality becomes an equality when $b = 2$.
\end{thm}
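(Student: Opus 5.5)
We apply \thmref{thm:goupil} with $\la=(b^{q})$ and $\mu=(n)$. The condition that $(xy)^{-1}=z$ is an $n$-cycle, with $xyz=1$ and $x=\sigman$, is equivalent to $yz=x^{-1}$, and $x^{-1}$ is again a fixed $n$-cycle. Hence $\vt{N(b,q,n)}=c^{n}_{\la\mu}$ with $\pi=x^{-1}$. Here $l=q$, $m=1$, $z_{\la}=q!\,b^{q}$ and $z_{\mu}=n$. The associated genus is $g=(n-q)/2$, which is a non-negative integer precisely because $n\equiv q \pmod 2$. The single part of $\mu$ forces $j_{1}=g_{2}$, contributing $\binom{n}{2g_{2}+1}$. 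Dividing by $\vt{T(b,q)}=n!/(b^{q}q!)$ from \propref{prop:tbq} gives
\begin{align}
\f{\vt{N}}{\vt{T}}=\f{1}{n!\,2^{n-q}}\sum_{g_{1}+g_{2}=g}(q+2g_{1}-1)!\,(2g_{2})!\binom{n}{2g_{2}+1}\,A_{g_{1}},
\end{align}
where $A_{g_{1}}=\sum_{(i_{1},\dots,i_{q})\vDash g_{1}}\prod_{k}\binom{b}{2i_{k}+1}$. Equivalently, $A_{g_{1}}$ is the coefficient of $t^{q+2g_{1}}$ in $F(t)^{q}$, where $F(t)=\bigl((1+t)^{b}-(1-t)^{b}\bigr)/2$. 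Every term of the sum is non-negative.

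Next we dispose of the case $b=2$. Then $\binom{2}{2i+1}=0$ for $i\ge1$, so only $g_{1}=0$ survives, with $A_{0}=b^{q}=2^{q}$. Since $n=2q$ and $2g=q$, this term equals
\begin{align}
\f{(q-1)!\,(n-q)!\,2^{q}}{n!\,2^{q}}\cdot\f{n!}{(n-q+1)!\,(q-1)!}=\f{1}{q+1}=\f{2}{n+2}.
\end{align}
This gives the equality claimed for $b=2$.

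For $b\ge3$ a single term no longer suffices. The $g_{1}=0$ term is $b^{q}/\bigl((n-q+1)2^{n-q}\bigr)$, and the $g_{2}=0$ term is $A_{g}/2^{n-q}$; both can be exponentially small. So we must use the whole sum. The plan is a coefficientwise comparison that reduces to a "$b=2$-like" identity. We have $F(t)=bt+\binom{b}{3}t^{3}+\cdots$ coefficientwise, and $F(t)^{q}$ has non-negative coefficients. We will seek a lower bound $A_{g_{1}}\ge$ an explicit binomial-type expression. Substituted into the sum, this should telescope or collapse via a Vandermonde/beta-integral identity, using $(q+2g_{1}-1)!\,(2g_{2})!\,\binom{n}{2g_{2}+1}$ and $g_{1}+g_{2}=(n-q)/2$, to something at least $2/(n+2)$.

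A cleaner alternative is to rewrite $(q+2g_{1}-1)!\,(2g_{2})!/n!$ as a beta integral $\int_{0}^{1}s^{q+2g_{1}-1}(1-s)^{2g_{2}}\,ds$ times constants. The whole sum then becomes an integral of an expression built from $F$, which we would bound below directly. Finding the right comparison inequality (or a closed form for this integral) that yields exactly $2/(n+2)$ uniformly in $b$ is the step I expect to be the main obstacle. I would first try to verify numerically, for small $(b,q)$, which choice of comparison is tight enough.
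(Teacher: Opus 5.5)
Your set-up is correct: you apply \thmref{thm:goupil} with $\la=(b^{q})$, $\mu=(n)$ and $\pi=x^{-1}$, and normalise by $\vt{T}$. The $b=2$ computation giving exactly $2/(n+2)$ is also right. But the real content of the statement is the inequality for $b\ge3$, and there you give no argument. The ``coefficientwise comparison'' and the beta-integral rewrite are never carried out, and you yourself leave the needed comparison unresolved. As written, the proposal proves only the equality case. The paper gives no proof here either; it cites \cite[Theorem~3.3]{Ohnishi26} as a consequence of \thmref{thm:goupil}. So your starting point is consistent with it, but the decisive step is missing.

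The missing observation is already visible in your two endpoint computations: the factorials cancel in every term, not just at $g_{1}=0$ and $g_{2}=0$. Since $q+2g_{1}-1=n-2g_{2}-1$, we have $(q+2g_{1}-1)!\,(2g_{2})!\binom{n}{2g_{2}+1}=n!/(2g_{2}+1)$. Now write $1/(n+1-k)=\int_{0}^{1}s^{n-k}\,ds$ and note that $s^{b}F(1/s)=\bigl((1+s)^{b}-(s-1)^{b}\bigr)/2$. This gives
\begin{align}
\f{\vt{N}}{\vt{T}}=\f{1}{2^{n-q}}\sum_{g_{1}+g_{2}=g}\f{A_{g_{1}}}{2g_{2}+1}=\f{1}{2^{n-q}}\sum_{k}\f{[t^{k}]\,F(t)^{q}}{n+1-k}=\int_{0}^{1}h(s)^{q}\,ds,
\end{align}
where $h(s)=\bigl(\f{1+s}{2}\bigr)^{b}-\bigl(\f{s-1}{2}\bigr)^{b}$. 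It then suffices to show $h(s)\ge s^{b/2}$ on $[0,1]$. For $b$ odd, $h(s)\ge\bigl(\f{1+s}{2}\bigr)^{b}\ge s^{b/2}$ by AM--GM. For $b=2m$, set $X=\bigl(\f{1+s}{2}\bigr)^{2}$ and $Y=\bigl(\f{1-s}{2}\bigr)^{2}$; then $X-Y=s$, and $X^{m}-Y^{m}\ge(X-Y)^{m}$ by the binomial expansion of $(Y+s)^{m}$. Integrating gives $\vt{N}/\vt{T}\ge\int_{0}^{1}s^{bq/2}\,ds=2/(n+2)$. Equality holds exactly when $b=2$, where $h(s)=s$.
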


\begin{proof}
See \cite[Theorem~3.3]{Ohnishi26}.
\end{proof}

\begin{lemma}
\label{lem:Nbqblower}
Let $[n,b^{q},b^{q}]$ be a passport with $n=qb$, and suppose that it has genus at least~$1$.
Fix an $n$-cycle $x \in S_n$, and let $N(b,q,b)$
denote the set of permutations $y \in S_n$ of cycle type $(b^{q})$ such that $(xy)^{-1}$ also has cycle type $(b^{q})$.
Then
\begin{align}
\vt{N(b, q, b)} \ge \f{(n-q)!}{2^{q(b-2)}\left(\f{q+1}{2}\right)!\left(\f{q-1}{2}\right)!}\left(\f{b-1}{2b}\right)^{\f{q-1}{2}}.
\end{align}
\end{lemma}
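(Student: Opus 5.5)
The plan is to read $\vt{N(b,q,b)}$ off \thmref{thm:goupil} with $\lambda=\mu=(b^{q})$, discard all but a few nonnegative terms of the resulting sum, and evaluate those terms explicitly.

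\emph{Reduction to \thmref{thm:goupil}.} The condition defining $N(b,q,b)$ is that $y$ and $z=(xy)^{-1}$ both have cycle type $(b^{q})$. Since $yz=x^{-1}$, the element $y$ determines $z$. So $\vt{N(b,q,b)}$ is the number of pairs $(\sigma,\rho)\in C_{(b^q)}\times C_{(b^q)}$ with $\sigma\rho=\pi$, where $\pi=x^{-1}$ is a fixed $n$-cycle; that is, $\vt{N(b,q,b)}=c^{n}_{\lambda\mu}$.

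\emph{Parameters.} Here $l=m=q$ and $z_\lambda=z_\mu=q!\,b^{q}$. The genus in \thmref{thm:goupil} is $g=(n-2q+1)/2$, which is exactly the genus of the passport $[n,b^q,b^q]$ given by \eqref{eq:genus}.
\begin{itemize}
\item Integrality of $g$ forces $n$ to be odd, so $b$ and $q$ are odd.
\item The hypothesis $g\ge 1$ gives $q(b-2)\ge 1$, so $b\ge 3$.
\end{itemize}
Every summand in Goupil's formula is nonnegative, so any subcollection of terms gives a lower bound.

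\emph{Choice of terms.} I keep the two extreme terms $(g_1,g_2)=(g,0)$ and $(0,g)$, which are equal by symmetry. For $(g_1,g_2)=(g,0)$:
\begin{itemize}
\item The factorials are $(q+2g-1)!\,(q-1)!=(n-q)!\,(q-1)!$.
\item The $\mu$-side composition is forced to be $(0,\dots,0)$, contributing $\prod_k\binom{b}{1}=b^{q}$.
\item On the $\lambda$-side we need compositions $(i_1,\dots,i_q)$ of $g$ with each $i_k\le (b-1)/2$.
\end{itemize}

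\emph{Choice of compositions.} The largest admissible total is $q(b-1)/2$, and $g=q(b-1)/2-(q-1)/2$. Hence a natural family of admissible compositions is obtained by setting $(q-1)/2$ of the $i_k$ equal to $(b-3)/2$ and the remaining ones equal to $(b-1)/2$; this needs $b\ge 3$.
\begin{itemize}
\item There are $\binom{q}{(q-1)/2}=q!/\bigl(\tfrac{q+1}{2}\bigr)!\bigl(\tfrac{q-1}{2}\bigr)!$ such compositions.
\item Each contributes $\binom{b}{b}^{(q+1)/2}\binom{b}{b-2}^{(q-1)/2}=\bigl(b(b-1)/2\bigr)^{(q-1)/2}$.
\end{itemize}

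\emph{Assembling the bound.} Multiplying by $2$ for the two symmetric $(g_1,g_2)$ terms gives
\begin{align}
\vt{N(b,q,b)}\ \ge\ \f{2n\,(n-q)!\,(q-1)!\,b^{q}}{(q!\,b^{q})^{2}\,2^{2g}}\cdot\f{q!}{\bigl(\tfrac{q+1}{2}\bigr)!\bigl(\tfrac{q-1}{2}\bigr)!}\Bigl(\f{b(b-1)}{2}\Bigr)^{\f{q-1}{2}}.
\end{align}
Simplify using $n=qb$, so that $n(q-1)!/(q!\,b^{q})=b^{1-q}$, and $2^{2g}=2\cdot 2^{q(b-2)}$. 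The factor $2$ cancels, and $b^{1-q}\bigl(b(b-1)/2\bigr)^{(q-1)/2}=\bigl((b-1)/(2b)\bigr)^{(q-1)/2}$. This yields exactly the stated bound.

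\emph{Main obstacle.} The delicate step is choosing which terms to keep so that the resulting expression matches the claimed closed form. The two points that need care are, first, recognizing that the $(g,0)$ and $(0,g)$ terms together supply the missing factor $2$, and second, checking the admissibility of the chosen compositions: each part must lie in $[0,(b-1)/2]$, which uses $b\ge3$, and the parts must sum to $g$.
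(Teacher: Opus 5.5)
Your proposal is correct and follows the paper's proof essentially step for step: you apply Goupil's formula with $\lambda=\mu=(b^{q})$, keep the two symmetric terms $(g_1,g_2)=(0,g)$ and $(g,0)$ (which are distinct because $g\ge 1$), and restrict to the compositions with $(q+1)/2$ parts equal to $(b-1)/2$ and $(q-1)/2$ parts equal to $(b-3)/2$. Your explicit checks that $b\ge 3$ and that these parts sum to $g$ spell out what the paper leaves implicit.
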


\begin{proof}
Since $x^{-1}$ also has cycle type $(n)$, \thmref{thm:goupil} implies that
$c_{(b^{q}),(b^{q})}^{n}$ is the number of pairs $(\sigma,\rho)$ such that
both $\sigma$ and $\rho$ have cycle type $(b^{q})$ and satisfy $\sigma\rho = x^{-1}$.
Moreover,
\begin{align}
\sigma\rho = x^{-1} \iff \rho = (x\sigma)^{-1}.
\end{align}
Hence $c_{(b^{q}),(b^{q})}^{n}$ is precisely the number of permutations
$\sigma$ of cycle type $(b^{q})$ such that $(x\sigma)^{-1}$ also has cycle type $(b^{q})$,
namely $\vt{N(b,q,b)}$.

Therefore, by \thmref{thm:goupil},
\begin{align}
\lvert N&(b, q, b) \rvert = c_{(b^{q})(b^{q})}^{n} \\
&= \f{n}{2^{2g}b^{2q}(q!)^{2}}\sum_{\substack{g_{1},g_{2}\ge 0\\ g_{1}+g_{2}=g}}(q+2g_{1}-1)! (q+2g_{2}-1)!
\sum_{\substack{(i_{1},\dotsc,i_{q})\vDash g_{1}\\(j_{1},\dotsc,j_{q})\vDash g_{2}}}
\prod_{k=1}^{q}\binom{b}{2i_{k}+1}\prod_{k=1}^{q}\binom{b}{2j_{k}+1},
\end{align}
where
\begin{align}
g = \f{q(b-2)+1}{2}.
\end{align}

Since $g\ge1$, the outer sum contains the two distinct terms
corresponding to $(g_{1},g_{2})=(0,g)$ and $(g,0)$.
Hence
\begin{align}
\lvert N&(b, q, b) \rvert \\
&\ge  \f{n}{2^{2g}b^{2q}(q!)^{2}}\sum_{\substack{(g_{1}, g_{2})=\\(0, g), (g, 0)}}(q+2g_{1}-1)! (q+2g_{2}-1)! 
\sum_{\substack{(i_{1},\dotsc,i_{q})\vDash g_{1}\\(j_{1},\dotsc,j_{q})\vDash g_{2}}}
\prod_{k=1}^{q}\binom{b}{2i_{k}+1}\prod_{k=1}^{q}\binom{b}{2j_{k}+1}.
\end{align}
Since the summand is symmetric in $g_{1}$ and $g_{2}$, we obtain
\begin{align}
\lvert N&(b, q, b)\rvert \\
&\ge 2\cdot\f{n}{2^{2g}b^{2q}(q!)^{2}}\sum_{\substack{(g_{1}, g_{2})\\=(0, g)}}(q+2g_{1}-1)! (q+2g_{2}-1)!
\sum_{\substack{(i_{1},\dotsc,i_{q})\vDash g_{1}\\(j_{1},\dotsc,j_{q})\vDash g_{2}}}
\prod_{k=1}^{q}\binom{b}{2i_{k}+1}\prod_{k=1}^{q}\binom{b}{2j_{k}+1} \\
&= \f{qb}{2^{2g-1}b^{2q}(q!)^{2}}(q-1)! (q+2g-1)! \sum_{\substack{(i_{1},\dotsc,i_{q})\vDash 0\\(j_{1},\dotsc,j_{q})\vDash g}}
\prod_{k=1}^{q}\binom{b}{2i_{k}+1}\prod_{k=1}^{q}\binom{b}{2j_{k}+1} \\
&= \f{q!}{2^{2g-1}b^{2q-1}(q!)^{2}}(q+2g-1)! \sum_{(j_{1},\dotsc,j_{q})\vDash g}
\prod_{k=1}^{q}\binom{b}{1}\prod_{k=1}^{q}\binom{b}{2j_{k}+1} \\
&= \f{1}{2^{2g-1}b^{2q-1}q!}(q+q(b-2)+1-1)! \sum_{(j_{1},\dotsc,j_{q})\vDash g}
b^{q}\prod_{k=1}^{q}\binom{b}{2j_{k}+1} \\
\label{eq:Nbqb}
&= \f{(n-q)!}{2^{q(b-2)}b^{q-1}q!}\sum_{(j_{1},\dotsc,j_{q})\vDash g}\prod_{k=1}^{q}\binom{b}{2j_{k}+1}.
\end{align}
The product
$\prod_{k=1}^{q}\binom{b}{2j_{k}+1}$
vanishes whenever $2j_{k}+1>b$ for some $k$.
Hence the sum may be restricted to compositions
$(j_{1},\dotsc,j_{q})$ satisfying
$j_{k} \le (b-1)/2$ ($1\le k \le q$).

Since
\begin{align}
&\f{b-1}{2}\cdot q - g = \f{q(b-1)}{2} - \f{q(b-2)+1}{2} = \f{q-1}{2},
\end{align}
and
$q - (q-1)/2 = (q+1)/2$,
the composition $(j_{1},\dotsc,j_{q})$ consisting of $(q+1)/2$ parts equal to $(b-1)/2$ and
$(q-1)/2$ parts equal to $(b-3)/2$ satisfies the required condition.

For each such composition, we have
\begin{align}
\prod_{k=1}^{q}\binom{b}{2j_{k}+1} = \binom{b}{b}^{\f{q+1}{2}}\binom{b}{b-2}^{\f{q-1}{2}} = \binom{b}{2}^{\f{q-1}{2}}.
\end{align}
Moreover, the number of such compositions is
$\binom{q}{(q-1)/2}$.
Therefore, restricting \eqref{eq:Nbqb} to these compositions, we obtain
\begin{align}
\vt{N(b, q, b)} &\ge \f{(n-q)!}{2^{q(b-2)}b^{q-1}q!}\binom{q}{\f{q-1}{2}}\binom{b}{2}^{\f{q-1}{2}} \\
&= \f{(n-q)!}{2^{q(b-2)}b^{q-1}q!}\cdot \f{q!}{\left(\f{q+1}{2}\right)!\left(\f{q-1}{2}\right)!}\left(\f{b(b-1)}{2}\right)^{\f{q-1}{2}} \\
&= \f{(n-q)!}{2^{q(b-2)}\left(\f{q+1}{2}\right)!\left(\f{q-1}{2}\right)!}\left(\f{b-1}{2b}\right)^{\f{q-1}{2}}.
\end{align}
\end{proof}

\subsection{Upper Bound for \textit{D}}

The following is standard.
\begin{prop}
\label{prop:Zg}
Let $g \in S_{n}$ and suppose that $g$ has cycle type
\begin{align}
\la = (1^{m_{1}}, 2^{m_{2}}, \ldots, n^{m_{n}}).
\end{align}
Define the centralizer of $g$ in $S_{n}$:
\begin{align}
Z_{g} \ceq \{ c \in S_{n} \mid cg = gc \}.
\end{align}
Then $\vt{Z_{g}}$ depends only on the cycle type $\la$, and
\begin{align}
\vt{Z_{g}} = 1^{m_{1}}m_{1}!\, 2^{m_{2}}m_{2}! \cdots n^{m_{n}}m_{n}!.
\end{align}
\end{prop}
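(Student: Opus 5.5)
The statement is the classical formula for the centralizer order. I would prove it by the orbit–stabilizer theorem applied to the conjugation action of $S_{n}$ on itself.

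First, the conjugacy class of $g$ in $S_{n}$ is exactly the set of permutations with cycle type $\la$, since conjugating $g$ by $c$ relabels the entries of each cycle. Under the action $c \cdot g = cgc^{-1}$, the stabilizer of $g$ is $Z_{g}$. Hence
\begin{align}
\vt{Z_{g}} = \f{n!}{\vt{\{ h \in S_{n} \mid h \text{ has cycle type } \la \}}},
\end{align}
which already shows that $\vt{Z_{g}}$ depends only on $\la$.

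It remains to count the permutations of cycle type $\la$. This generalizes \propref{prop:tbq}, and I would count in the same way. Write down a fixed template of bracket positions consisting of $m_{1}$ cycles of length~1, $m_{2}$ cycles of length~2, and so on. Fill the $n$ positions with $1,\ldots,n$ in all $n!$ ways.

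Each permutation of type $\la$ arises exactly $\prod_{i} i^{m_{i}} m_{i}!$ times. The factor $i$ comes from the cyclic rotations within each $i$-cycle. The factor $m_{i}!$ comes from reordering the $m_{i}$ cycles of the same length. These two operations together generate all fillings that yield the same permutation, and they act freely on the fillings. Therefore the class has size $n!/\prod_{i} i^{m_{i}}m_{i}!$, and substituting this gives the formula.

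The only point needing care is the claim that the redundancy group acts freely and accounts for every coincidence. Distinct rotations or cycle reorderings give distinct fillings, because all entries are distinct. Conversely, two fillings yielding the same permutation must have the same set of cycles, so they differ only by such operations.

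Alternatively, one could describe $Z_{g}$ directly as $\prod_{i} (C_{i} \wr S_{m_{i}})$, since any $c$ commuting with $g$ permutes the $i$-cycles of $g$ among themselves, compatibly with the cyclic order. The counting argument is more direct, so I would present that one.
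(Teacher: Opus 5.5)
Your argument is correct, but it runs in the opposite direction from the source the paper relies on. The paper gives no proof of its own and simply cites \cite[Proposition~1.1.1]{Sagan01}. There, $Z_g$ is described directly: an element commuting with $g$ must carry each $i$-cycle of $g$ onto an $i$-cycle while respecting the cyclic order. It is therefore determined by a bijection among the $m_i$ cycles of length $i$ together with a rotation for each of them, giving $m_i!\,i^{m_i}$ choices. The class size $n!/\prod_i i^{m_i}m_i!$ is then obtained afterwards as a corollary via orbit--stabilizer. In other words, the ``alternative'' you mention at the end is essentially the cited proof.

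Your route counts the conjugacy class first by template filling and then divides. This needs one ingredient the direct route avoids: that the conjugation orbit of $g$ is the whole set of permutations of type $\lambda$. You justify that conjugation preserves the type. The converse, that any two permutations of the same type are conjugate, follows from the same relabeling remark but deserves its own sentence.

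In exchange, your argument yields the class-size formula at the same time, generalizing Proposition~\ref{prop:tbq}. The direct argument is shorter and gives the more informative structural statement $Z_g \cong \prod_i C_i \wr S_{m_i}$. For example, it lets one read off $D_1 = \{x^{k} \mid 0 \le k \le n-1\}$ in Proposition~\ref{prop:Dk} without any counting.
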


\begin{proof}
See \cite[Proposition~1.1.1]{Sagan01}.
\end{proof}

Using this proposition, we obtain the following.

\begin{prop}
\label{prop:Dk}
Let $n \in \Zp$ with $n \ge 2$, $x \in S_{n}$ be an $n$-cycle, and for each $1 \le k \le n-1$, define
\begin{align}
\label{eq:defDk}
D_k \ceq \{\,c\in S_n \mid cx^k=x^k c\,\},
\end{align}
the centralizer of $x^k$ in $S_n$.
Then the following hold.
\begin{enumerate}
\item \label{itm:xcom1}
$D_1=\{x^k \mid 0\le k\le n-1\}$.
\item \label{itm:xcom2}
For any $1\le k,l\le n-1$, if $k\mid l$, then $D_k\subset D_l$.
\item \label{itm:xcom3}
For any $1\le k\le n-1$, let $m=\gcd(n,k)$. Then
\begin{align}
D_k&=D_m, \\
\lvert D_k\rvert&=\lvert D_m\rvert=\left(\f{n}{m}\right)^m m!.
\end{align}
\end{enumerate}
\end{prop}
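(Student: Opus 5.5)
We identify $\{1,\dots,n\}$ with $\Z/n\Z$ so that $x$ acts by $i\mapsto i+1$; this fixes $x=\sigman$. Each of the three items then reduces to an explicit description of a centralizer. Throughout we use that $c$ commutes with a permutation $g$ if and only if $c$ permutes the cycles of $g$ while respecting the cyclic order inside each cycle.

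For \ref{itm:xcom1}, take $c\in D_1$. Then $c(i)=c(x^{i}\cdot 0)=x^{i}\cdot c(0)=i+c(0)$, so $c=x^{c(0)}$. Conversely, every power of $x$ commutes with $x$. Hence $D_1=\{x^k\}$. As a check, its order $n$ agrees with \propref{prop:Zg} for cycle type $(n)$.

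For \ref{itm:xcom2}, suppose $k\mid l$ and write $l=kt$. If $c$ commutes with $x^k$, then it commutes with $(x^k)^t=x^l$, so $D_k\subset D_l$.

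For \ref{itm:xcom3}, let $m=\gcd(n,k)$. We first show that $x^k$ and $x^m$ generate the same cyclic subgroup of $\gen{x}\cong\Z/n\Z$. Indeed, $m\mid k$ gives $\gen{x^k}\subset\gen{x^m}$. By Bézout there are integers $u,v$ with $m=uk+vn$, so $x^m=(x^k)^u$ and the reverse inclusion holds. A permutation commutes with $x^k$ if and only if it commutes with every element of $\gen{x^k}=\gen{x^m}$, which is equivalent to commuting with $x^m$. Thus $D_k=D_m$. (Since $1\le k\le n-1$, we have $1\le m\le n-1$ as well, so $D_m$ is among the sets defined.)

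It remains to find $|D_m|$. Here $x^m$ acts by $i\mapsto i+m$, whose orbits are the $m$ residue classes modulo $m$, each of size $n/m$. So $x^m$ has cycle type $((n/m)^m)$, and \propref{prop:Zg} gives $|D_m|=(n/m)^m\, m!$.

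The main obstacle is the identification $D_k=D_m$, which rests on the subgroup equality $\gen{x^k}=\gen{x^m}$. Once that is settled, every other step is direct.
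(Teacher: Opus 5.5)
Your proof is correct, and for items (2) and (3) it is the paper's argument: the paper gets $D_m\subset D_k$ from $m\mid k$ and the reverse inclusion from a Bézout relation $x^m=(x^k)^s$, then applies \propref{prop:Zg} to the cycle type $((n/m)^m)$ of $x^m$. The only difference is in item (1). There the paper counts: \propref{prop:Zg} gives $\vt{D_1}=n$, and the $n$ distinct powers of $x$ all lie in $D_1$. Your direct computation $c(i)=i+c(0)$ instead shows $c=x^{c(0)}$ without using the centralizer formula.
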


\begin{proof}
\item[\ref{itm:xcom1}] Since $x$ has cycle type
\begin{align}
(1^{0}, 2^{0}, \ldots, (n-1)^{0}, n^{1}),
\end{align}
\propref{prop:Zg} implies that
\begin{align}
\vt{D_{1}} = n^{1}\cdot 1! = n.
\end{align}
On the other hand,
\begin{align}
 x^{0} = \id,\ x^{1},\ x^{2},\ \ldots,\ x^{n-1}
\end{align}
are all distinct, and for every $0 \le k \le n-1$,
\begin{align}
x^{k} x = x x^{k}.
\end{align}
Hence
\begin{align}
\{ x^{k} \mid 0 \le k \le n-1 \} \subset D_{1},
\end{align}
and the left-hand side contains $n$ elements. Therefore, we conclude that
\begin{align}
D_{1} = \{ x^{k} \mid 0 \le k \le n-1 \}.
\end{align}

\item[\ref{itm:xcom2}] Suppose that $k\mid l$.
Then $l = dk$ for some $d \in \Zp$.
If $c \in D_{k}$, then $c x^{k} = x^{k} c$, and hence
\begin{align}
c x^{l} &= c (x^{k})^{d} = (x^{k})^{d}c = x^{l}c.
\end{align}
Hence $c\in D_l$.

Since $c$ was arbitrary, we obtain $D_{k} \subset D_{l}$.
\HL

\item[\ref{itm:xcom3}] Since $m\mid k$, \ref{itm:xcom2} implies that
\begin{align}
\label{eq:Dmk}
D_{m} \subset D_{k}.
\end{align}

We can write $n = mn'$ and $k = mk'$ with $n', k' \in \Zp$ and $\gcd(n', k') = 1$. Hence there exists
$r, s \in \Z$ such that
\begin{align}
rn' + sk' = 1.
\end{align}
Then
\begin{align}
rn + sk = rmn' + smk' = m(rn'+sk') = m.
\end{align}
Since $x^n=1$, we have
\begin{align}
x^{m} = x^{rn+sk}=x^{sk}=(x^{k})^{s}.
\end{align}
Let $c \in D_{k}$. Then
\begin{align}
cx^{m} = c(x^{k})^{s} = (x^{k})^{s}c = x^{m}c,
\end{align}
hence $c \in D_{m}$. Since $c$ was arbitrary, we obtain
\begin{align}
\label{eq:Dkm}
D_{k} \subset D_{m}.
\end{align}

By \eqref{eq:Dmk} and \eqref{eq:Dkm}, we conclude that $D_{k} = D_{m}$.

Since $m \mid n$, the cycle type of $x^{m}$ is $((n/m)^{m})$. Therefore, by \propref{prop:Zg},
\begin{align}
\vt{D_{k}} = \vt{D_{m}} = \left(\f{n}{m}\right)^{m} m!.
\end{align}
\end{proof}

\begin{cor}
\label{cor:ADtrivial}
Let $x, y \in S_{n}$, and assume $x$ is an $n$-cycle. Let $\msD$ be the dessin corresponding to the monodromy
group $\gen{x, y}$. Then,
\begin{enumerate}
\item \label{itm:ADtrivial1} $\AD = \{ x^{k} \mid 0 \le k \le n-1,\ x^{k}y = yx^{k} \}$.
\item \label{itm:ADtrivial2} $\AD$ is trivial if and only if $x^{k}y \ne yx^{k}$ for all $1 \le k \le n-1$.
\end{enumerate}
\end{cor}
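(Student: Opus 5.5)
The plan is to combine the centralizer description $\AD = C_{S_n}(\gen{x,y})$ with \propref{prop:Dk}\ref{itm:xcom1}. Since $\AD$ is the set of $c \in S_n$ with $cx = xc$ and $cy = yc$, we have
\begin{align}
\AD = D_{1} \cap \{ c \in S_n \mid cy = yc \}.
\end{align}
Here $D_1 = C_{S_n}(x)$ is the centralizer of $x$.

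By \propref{prop:Dk}\ref{itm:xcom1}, this centralizer is exactly $\{x^{k} \mid 0 \le k \le n-1\}$, because $x$ is an $n$-cycle. Intersecting with the centralizer of $y$ gives $\AD = \{x^k \mid 0\le k\le n-1,\ x^k y = y x^k\}$. This proves \ref{itm:ADtrivial1}.

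For \ref{itm:ADtrivial2}, the elements $x^0=\id, x^1,\dots,x^{n-1}$ are pairwise distinct, as noted in the proof of \propref{prop:Dk}. Moreover $x^0=\id$ always commutes with $y$. So by \ref{itm:ADtrivial1}, $\AD=\{\id\}$ holds if and only if no $x^k$ with $1\le k\le n-1$ commutes with $y$.

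There is no real obstacle here. The only point to state explicitly is that $\AD$ is computed as a centralizer inside $S_n$, that is, with $E$ identified with $\{1,\dots,n\}$. Everything else reduces to the identification $D_1=\gen{x}$, which has already been established.
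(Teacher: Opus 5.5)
Your proposal is correct and follows the paper's proof. Both write $\AD$ as the set of elements of $C_{S_n}(x)$ that also commute with $y$, use \propref{prop:Dk}\ref{itm:xcom1} to identify $C_{S_n}(x)=\{x^k \mid 0\le k\le n-1\}$, and then deduce \ref{itm:ADtrivial2} from \ref{itm:ADtrivial1}. You also state explicitly that the powers $x^0,\dots,x^{n-1}$ are distinct and that $x^0$ always commutes with $y$, which the paper leaves implicit.
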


\begin{proof}
\item[\ref{itm:ADtrivial1}] $\AD$ is defined by \eqref{eq:cxcy}. By \propref{prop:Dk}\ref{itm:xcom1}, the elements commuting with $x$
are precisely the powers of $x$. Hence
\begin{align}
\AD = \{ x^{k} \mid 0 \le k \le n-1,\ x^{k}y = yx^{k} \}.
\end{align}

\item[\ref{itm:ADtrivial2}] By \ref{itm:ADtrivial1}, we have
\begin{align}
\AD\ \text{is trivial} \iff \AD = \{ x^{0} \} \iff x^{k}y \ne yx^{k}\ \text{for all}\ 1 \le k \le n-1.
\end{align}
\end{proof}

\begin{prop}
\label{prop:D}
Let $n \in \Zp$ with $n \ge 2$, $x \in S_{n}$ be an $n$-cycle, and for each $1 \le k \le n-1$, define $D_{k}$ by \eqref{eq:defDk}.
Let $D$ be the set of permutations $y \in S_{n}$ such that the corresponding dessin $\msD$,
with monodromy group $\gen{x,y}$, has nontrivial automorphism group. Then
\begin{align}
D &= \bigcup_{\ell\colon \text{prime},\, \ell\, \mid\, n} D_{\f{n}{\ell}}, \\
\label{eq:vtD}
\vt{D} &\le \sum_{\ell\colon \text{prime},\, \ell\, \mid\, n} \ell^{\f{n}{\ell}}\left(\f{n}{\ell}\right)!.
\end{align}
\end{prop}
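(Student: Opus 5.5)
By \corref{cor:ADtrivial}\ref{itm:ADtrivial2}, a permutation $y\in S_n$ lies in $D$ exactly when $x^{k}y=yx^{k}$ for some $1\le k\le n-1$. In the notation of \eqref{eq:defDk} this says
\begin{align}
D=\bigcup_{k=1}^{n-1} D_k .
\end{align}
The proof then has three steps: reduce this union to the $D_{n/\ell}$ with $\ell$ a prime divisor of $n$, check the reverse inclusion, and apply the union bound.

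For the reduction, fix $1\le k\le n-1$ and put $m=\gcd(n,k)$. By \propref{prop:Dk}\ref{itm:xcom3} we have $D_k=D_m$. Here $m$ is a proper divisor of $n$, since $k<n$ forces $m<n$. So $n/m>1$, and we may choose a prime $\ell$ dividing $n/m$. Then $m\mid n/\ell$, and \propref{prop:Dk}\ref{itm:xcom2} gives $D_m\subset D_{n/\ell}$. Hence every $D_k$ is contained in some $D_{n/\ell}$.

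Conversely, each index $n/\ell$ with $\ell$ a prime divisor of $n$ satisfies $1\le n/\ell\le n-1$, because $\ell\ge2$. So $D_{n/\ell}$ is one of the sets in the union, and it is contained in $D$. This establishes
\begin{align}
D=\bigcup_{\ell\colon \text{prime},\,\ell\mid n} D_{n/\ell}.
\end{align}

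For the bound \eqref{eq:vtD}, apply the union bound $\vt{D}\le\sum_{\ell}\vt{D_{n/\ell}}$ to this equality. By \propref{prop:Dk}\ref{itm:xcom3} with $m=n/\ell$ (which divides $n$, so $\gcd(n,m)=m$), we get
\begin{align}
\vt{D_{n/\ell}}=\ell^{n/\ell}\left(\f{n}{\ell}\right)!.
\end{align}
Summing over the prime divisors $\ell$ of $n$ gives \eqref{eq:vtD}.

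The only step requiring care is the reduction: choosing a prime $\ell$ dividing $n/\gcd(n,k)$ and verifying the divisibility $m\mid n/\ell$ needed to invoke \propref{prop:Dk}\ref{itm:xcom2}. Everything else follows directly from earlier results.
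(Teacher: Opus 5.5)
Your proposal is correct and follows the paper's proof. Both write $D=\bigcup_{k=1}^{n-1}D_k$ via \corref{cor:ADtrivial}, reduce each $D_k$ to some $D_{n/\ell}$ using parts (2) and (3) of \propref{prop:Dk}, and finish with the union bound and $\vt{D_{n/\ell}}=\ell^{n/\ell}(n/\ell)!$. You are slightly more explicit than the paper: you choose a prime $\ell\mid n/\gcd(n,k)$ and check the reverse inclusion, whereas the paper only asserts that each $D_k$ lies in some $D_m$ with $m$ a maximal proper divisor of $n$.
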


\begin{proof}
By \corref{cor:ADtrivial}\ref{itm:ADtrivial2}, we have
\begin{align}
D^{c} = \bigcap_{k=1}^{n-1} D_{k}^{c},
\end{align}
hence
\begin{align}
D = \bigcup_{k=1}^{n-1} D_{k}.
\end{align}

By \propref{prop:Dk}\ref{itm:xcom2}\ref{itm:xcom3}, every $D_k$ is contained in
$D_m$ for some maximal proper divisor $m$ of $n$. Therefore, to determine $D$
and estimate its cardinality, it suffices to consider only those $k$ of the form
$n/\ell$, where $\ell$ is a prime divisor of $n$.
Hence
\begin{align}
D = \bigcup_{\ell\colon \text{prime},\, \ell\, \mid\, n} D_{\f{n}{\ell}}.
\end{align}
Since $\gcd(n, n/\ell) = n/\ell$, \propref{prop:Dk}\ref{itm:xcom3} yields
\begin{align}
\label{eq:Dnp}
\vt{D_{\f{n}{\ell}}} = \ell^{\f{n}{\ell}}\left(\f{n}{\ell}\right)!.
\end{align}
Consequently,
\begin{align}
\vt{D} \le \sum_{\ell\colon \text{prime},\, \ell\, \mid\, n} \vt{D_{\f{n}{\ell}}}
=  \sum_{\ell\colon \text{prime},\, \ell\, \mid\, n} \ell^{\f{n}{\ell}}\left(\f{n}{\ell}\right)!.
\end{align}
\end{proof}

\begin{lemma}
\label{lem:Dupper}
Let $n \in \Zp$ with $n \ge 2$, $x \in S_{n}$ be an $n$-cycle, and for each $1 \le k \le n-1$, define $D_{k}$ by \eqref{eq:defDk}.
Let
\begin{align}
\kappa_{1} \ceq \f{2623}{1894} = 1.384\ldots, \q \kappa_{2} \ceq \f{972}{947} = 1.026\ldots.
\end{align}
Then
\begin{align}
\label{eq:Dupper}
\vt{D} \le \kappa_{1}\cdot 2^{\f{n}{2}}\left(\f{n}{2}\right)!,
\end{align}
where, for non-integer $M$, we define $M! \ceq \Gamma(M+1)$.

In particular, if $n$ is odd, then
\begin{align}
\label{eq:Dupperodd}
\vt{D} \le \kappa_{2}\cdot3^{\f{n}{3}}\left(\f{n}{3}\right)!.
\end{align}
\end{lemma}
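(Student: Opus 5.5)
Starting from \eqref{eq:vtD} in \propref{prop:D}, I will write $f(t) \ceq t^{n/t}\,\Gamma(n/t+1)$ for real $t\ge 2$, so that
\begin{align}
\vt{D} \le \sum_{\ell\colon \text{prime},\, \ell \mid n} f(\ell).
\end{align}
The plan is to show that $f(\ell)$ decays very quickly as $\ell$ increases, and then to bound the whole sum by $f(2)$ (or by $f(3)$ when $n$ is odd) times an explicit constant. The constants $\kappa_1,\kappa_2$ suggest what happens: a few small primes are treated exactly, and everything else is absorbed into a geometric tail.

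First I will compare $f(\ell)$ with $f(2)$ by means of ratios. Put $m=n/2$. Then
\begin{align}
\f{f(\ell)}{f(2)} = \f{\ell^{n/\ell}\,(n/\ell)!}{2^{n/2}\,(n/2)!}.
\end{align}
To bound this I will use Stirling-type estimates, in the form $\Gamma(M+1)/\Gamma(M'+1)\le$ something like $M^{M-M'}$ for $M\ge M'$. This gives $(n/2)!/(n/\ell)! \ge (n/\ell+1)^{n/2-n/\ell}$, and hence
\begin{align}
\f{f(\ell)}{f(2)} \le \f{\ell^{n/\ell}}{2^{n/2}\,(n/\ell+1)^{n/2-n/\ell}}.
\end{align}
For fixed $\ell\ge 3$ the right-hand side decays super-exponentially in $n$. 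The sum over primes $\ell\ge 5$ can be dominated by the sum over all integers $\ell\ge 5$ dividing $n$, and that is a rapidly convergent series.

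The main obstacle is getting constants that hold uniformly for \emph{all} $n\ge 2$, not only asymptotically. I will first try the following. Let $n_0$ be a threshold, say $n_0$ around $20$. For $n\le n_0$ I will compute $\sum_{\ell\mid n} f(\ell)/f(2)$ directly; only the finitely many $n$ with at least two prime divisors matter, and this is where the worst ratio, and hence a constant like $2623/1894$, should come from. I expect the extremal case to be a small $n$ such as $n=6$: there
\begin{align}
f(2)=2^3\cdot 3!=48, \q f(3)=3^2\cdot 2!=18,
\end{align}
so the ratio is $66/48\approx 1.375$. A case such as $n=30$ might also matter. For $n>n_0$ I will use the explicit ratio bound above. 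It gives $f(3)/f(2)$ small, and $\sum_{\ell\ge5} f(\ell)/f(2)$ smaller still: bounding by the number of prime divisors, at most $\log_2 n$, times the largest term suffices. So the total stays below $\kappa_1$. Whenever $f(2)$ appears for $n$ odd or non-integral $n/2$, the Gamma function convention makes the bound meaningful.

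For odd $n$, only primes $\ell\ge3$ occur. I will repeat the argument with $f(3)$ as the reference term, bounding the ratio $f(\ell)/f(3)$ for $\ell\ge5$ in the same way. The small cases will again be checked by hand; likely candidates for the extremum are $n=15$ or $n=105$, and these would produce $\kappa_2=972/947$. I will then combine these with the large-$n$ tail estimate, which should be very close to $1$.
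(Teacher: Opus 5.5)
Your plan is workable, but it takes a different route from the paper, and your reading of where $\kappa_1,\kappa_2$ come from is off.

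\textbf{The paper's route.} The paper does no case check at all.
\begin{itemize}
\item It first shows that $F_n(t)=t^{n/t}\Gamma(n/t+1)$ is strictly decreasing on $[2,\infty)$, using $\psi(y+1)>\log y$. This settles prime powers. It also settles every comparison with $F_n(2)$ or $F_n(3)$ when $2$ or $3$ does not divide $n$.
\item It then compares \emph{consecutive} prime divisors. Writing $n=\ell_i\ell_{i+1}s$, one has $F_n(\ell_{i+1})/F_n(\ell_i)=H(\ell_i,\ell_{i+1}-\ell_i,s)\le H(\ell_i,\ell_{i+1}-\ell_i,1)$.
\item Since $H(m,d,1)$ decreases in both $m$ and $d$, the first ratio is at most $H(2,1,1)=3/8$. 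Every later ratio is at most $H(3,2,1)=25/972$, because consecutive odd primes differ by at least $2$.
\item Summing the geometric series gives $\kappa_1=1+\frac38\cdot\frac{1}{1-25/972}$ and $\kappa_2=\frac{1}{1-25/972}$.
\end{itemize}
So the constants are artifacts of this series and are not attained at any $n$. At $n=6$ the ratio is $66/48=11/8<\kappa_1$, and at $n=15$ it is $997/972=1+25/972<\kappa_2$. Your small-case computation would therefore prove the lemma with room to spare, possibly with these sharper constants. It would not reproduce $2623/1894$ and $972/947$.

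\textbf{What each approach buys, and what you still owe.} The paper's argument is uniform and needs no computation. Yours could give optimal constants, but several steps are only gestured at.
\begin{itemize}
\item The bound $(n/2)!/(n/\ell)!\ge(n/\ell+1)^{n/2-n/\ell}$ needs $n/2-n/\ell\ge1$ when the arguments are not integers. It fails at $n=3$, since $\Gamma(5/2)<\sqrt2$, and the analogous bound relative to $f(3)$ fails at $n=5$. So it is only usable in the tail.
\item Bounding ``the largest term'' over primes $\ell\ge5$ needs either a uniform estimate of your displayed bound over $5\le\ell\le n/2$, or monotonicity of $f$. You prove neither.
\item In the odd case the margin is only $\kappa_2-1\approx0.026$. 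Your ``at most $\log_2 n$ terms'' estimate must therefore be checked explicitly, and $n_0$ chosen so that it works.
\item It is not true that only $n$ with at least two prime divisors matter in the small range. Odd prime powers such as $n=9$ or $n=25$ still require comparing $f(\ell)$ with $f(2)$ or $f(3)$ at non-integer arguments. This is most easily done via monotonicity of $f$, which is exactly the paper's first step.
\end{itemize}
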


\begin{proof}
When $n = 2$, by \propref{prop:D},
\begin{align}
\vt{D} = \vt{D_{1}} = 2^{\f{n}{2}}\left(\f{n}{2}\right)! \le \kappa_{1}\cdot 2^{\f{n}{2}}\left(\f{n}{2}\right)! .
\end{align}

In the following, assume $n \ge 3$.

Let
\begin{align}
\label{eq:Fnx}
F_{n}(x) \ceq x^{\f{n}{x}}\Gamma\left(\f{n}{x}+1\right)\q (x \ge 2).
\end{align}
Then, for every prime divisor $\ell$ of $n$, by \propref{prop:Dk}\ref{itm:xcom3},
\begin{align}
\vt{D_{\f{n}{\ell}}} = F_{n}(\ell).
\end{align}

Let $G_{n}(x) = \log F_{n}(x)$. Then by \eqref{eq:Fnx},
\begin{align}
G_{n}(x) = \f{n}{x}\log x + \log \Gamma\left(\f{n}{x}+1\right),
\end{align}
hence
\begin{align}
G_{n}'(x) = -\f{n}{x^{2}}\left(\log x - 1 + \psi\left(\f{n}{x}+1\right)\right),
\end{align}
where
\begin{align}
\psi(x) \ceq \dv{x} \log \Gamma(x) = \f{\Gamma'(x)}{\Gamma(x)}
\end{align}
is the digamma function.

It is well known that
\begin{align}
\psi(x+1) > \log x \q (x>0).
\end{align}
Hence
\begin{align}
G_{n}'(x) &< -\f{n}{x^{2}}\left(\log x - 1 + \log \f{n}{x}\right) \\
&= -\f{n}{x^{2}}(\log n - 1) \le -\f{n}{x^{2}}(\log 3 - 1) < 0.
\end{align}
Therefore, $G_n$ is strictly decreasing in $x$. Hence so is $F_n$.
\HL

\item[(i)] The case where $n$ has precisely one prime divisor.

Let $\ell$ denote the unique prime divisor of $n$. Then we can write $n = \ell^{m}$ with
$m \in \Zp$. By \propref{prop:D}, we have
\begin{align}
D &= D_{\f{n}{\ell}}, \\
\vt{D} &= \ell^{\f{n}{\ell}}\left(\f{n}{\ell}\right)! = F_{n}(\ell).
\end{align}

Since $F_n$ is decreasing and $\ell \ge 2$, we have
\begin{align}
\vt{D} = F_{n}(\ell) \le F_{n}(2) = 2^{\f{n}{2}}\left(\f{n}{2}\right)! \le  \kappa_{1}\cdot2^{\f{n}{2}}\left(\f{n}{2}\right)!.
\end{align}

In particular, if $n$ is odd, then $\ell \ge 3$, and therefore
\begin{align}
\vt{D} = F_{n}(\ell) \le F_{n}(3) = 3^{\f{n}{3}}\left(\f{n}{3}\right)! \le \kappa_{2}\cdot3^{\f{n}{3}}\left(\f{n}{3}\right)!.
\end{align}

\item[(ii)] The case where $n$ has at least~2 prime divisors.

Let
\begin{align}
\ell_1<\ell_2<\cdots<\ell_w \q (w \ge 2)
\end{align}
be the distinct prime divisors of $n$.
For $1 \le i \le w-1$, write $n = \ell_{i}\ell_{i+1}s_{i}$ with $s_{i} \in \Zp$.
Since $F_{n}(\ell)=\vt{D_{n/\ell}}$, we have
\begin{align}
\f{F_{n}(\ell_{i+1})}{F_{n}(\ell_{i})} &= \f{\vt{D_{\f{n}{\ell_{i+1}}}}}{\vt{D_{\f{n}{\ell_{i}}}}}
= \f{\ell_{i+1}^{\f{n}{\ell_{i+1}}}\left(\f{n}{\ell_{i+1}}\right)!}{\ell_{i}^{\f{n}{\ell_{i}}}\left(\f{n}{\ell_{i}}\right)!}
= \f{\ell_{i+1}^{\ell_{i}s_{i}}(\ell_{i}s_{i})!}{\ell_{i}^{\ell_{i+1}s_{i}}(\ell_{i+1}s_{i})!}.
\end{align}

Define
\begin{align}
H(m, d, s) \ceq \f{(m+d)^{ms}(ms)!}{m^{(m+d)s}((m+d)s)!}\q (m, d, s \in \Zp, m \ge 2),
\end{align}
so that
\begin{align}
\f{F_{n}(\ell_{i+1})}{F_{n}(\ell_{i})} &= \f{\vt{D_{\f{n}{\ell_{i+1}}}}}{\vt{D_{\f{n}{\ell_{i}}}}} = H(\ell_{i}, \ell_{i+1}-\ell_{i}, s_{i}).
\end{align}

Then we have
\begin{align}
\f{H(m, d, s)}{H(m, d, 1)^{s}} &= \f{(m+d)^{ms}(ms)!}{m^{(m+d)s}((m+d)s)!}\cdot\left(\f{m^{(m+d)}(m+d)!}{(m+d)^{m}m!}\right)^{s} \\
&= \f{(ms)!}{((m+d)s)!}\left(\f{(m+d)!}{m!}\right)^{s} \\
&= \f{(m+1)^{s}(m+2)^{s}\cdots(m+d)^{s}}{(ms+1)(ms+2)\cdots(ms+ds)} \\
&= \f{(m+1)^{s}}{(ms+1)\cdots(ms+s)}\cdot\f{(m+2)^{s}}{(ms+s+1)\cdots(ms+2s)}\cdot \\
&\qquad \cdots\cdot\f{(m+d)^{s}}{(ms+(d-1)s+1)\cdots(ms+ds)} \\
&= \prod_{i=1}^{d}\f{(m+i)^{s}}{\prod_{j=1}^{s}(ms+(i-1)s+j)}.
\end{align}
For each $1\le i\le d$ and $1\le j\le s$,
\begin{align}
ms+(i-1)s+j = s(m+i-1)+j \ge (m+i-1)+1 = m+i.
\end{align}
Hence
\begin{align}
\label{eq:Hmds1}
H(m, d, s) \le H(m, d, 1)^{s}.
\end{align}

Since $(1+d/m)^{m}$ is strictly increasing as a function of $m > 0$
and converges to $e^{d}$ as $m \to \infty$,
\begin{align}
\left(\f{m+d}{m}\right)^{m} = \left(1+\f{d}{m}\right)^{m} < e^{d} < m^{d} \q (m>e).
\end{align}
Hence
\begin{align}
(m+d)^{m} < m^{m+d} \q (m > e).
\end{align}
Therefore,
\begin{align}
\label{eq:Hmge3}
H(m, d, 1) = \f{(m+d)^{m}}{m^{m+d}}\cdot\f{1}{(m+1)\cdots(m+d)} < 1\q (m \ge 3).
\end{align}
Moreover, when $m = 2$,
\begin{align}
H(2, 1, 1) &= \f{3^{2}\cdot 2!}{2^{3}\cdot 3!} = \f{3}{8} < 1, \\
\f{H(2, d+1, 1)}{H(2, d, 1)} &= \f{(d+3)^{2}\cdot 2!}{2^{d+3}(d+3)!}\cdot\f{2^{d+2}(d+2)!}{(d+2)^{2}\cdot 2!} \\
&= \f{d+3}{2(d+2)^{2}} < \f{d+3}{2(d+2)} < \f{d+3}{d+4} < 1\q(d \ge 1).
\end{align}
Hence $H(2,d,1)<1$ for all $d \ge 1$.
Combining this with \eqref{eq:Hmge3}, we obtain
\begin{align}
H(m, d, 1) < 1 \q (m \ge 2,\ d \ge 1).
\end{align}
By \eqref{eq:Hmds1},
\begin{align}
\label{eq:Hmds2}
H(m, d, s) \le H(m, d, 1) < 1 \q (m \ge 2,\ d \ge 1,\ s \ge 1).
\end{align}

Moreover, regarding $H(m, d, 1)$,
\begin{align}
\f{H(m, d+1, 1)}{H(m, d, 1)} &= \f{(m+d+1)^{m}m!}{m^{m+d+1}(m+d+1)!}\cdot\f{m^{m+d}(m+d)!}{(m+d)^{m}m!} \\
&= \f{(m+d+1)^{m-1}}{m(m+d)^{m}} = \f{1}{m(m+d)}\left(1+\f{1}{m+d}\right)^{m-1} \\
&< \f{1}{m(m+d)}\left(1+\f{1}{m+d}\right)^{m+d}.
\end{align}
Since
\begin{align}
\left(1+\f{1}{m+d}\right)^{m+d} < e,
\end{align}
we obtain
\begin{align}
\f{H(m, d+1, 1)}{H(m, d, 1)} &< \f{e}{m(m+d)} \le \f{e}{2(2+1)} < 1.
\end{align}
Therefore, $H(m,d,1)$ is strictly decreasing in $d$.

Moreover,
\begin{align}
\f{H(m+1, d, 1)}{H(m, d, 1)} &= \f{(m+d+1)^{m+1}(m+1)!}{(m+1)^{m+d+1}(m+d+1)!} \cdot\f{m^{m+d}(m+d)!}{(m+d)^{m}m!} \\
&= \left(1 + \f{1}{m+d}\right)^{m}\left(\f{m}{m+1}\right)^{m+d} \\
&< \left(1 + \f{1}{m}\right)^{m}\left(\f{m}{m+1}\right)^{m+d} = \left(\f{m}{m+1}\right)^{d} < 1.
\end{align}
Therefore, $H(m, d, 1)$ is also strictly decreasing in $m$.

Since each $\ell_{i}$ is a prime number, we have $\ell_{i+1} - \ell_{i} = 1$ only when $i = 1$, $\ell_{1} = 2$, and
$\ell_{2} = 3$. Otherwise $\ell_{i+1} - \ell_{i} \ge 2$. Hence,
\begin{align}
\f{F_{n}(\ell_{2})}{F_{n}(\ell_{1})} &= H\left(\ell_{1}, \ell_{2}-\ell_{1}, \f{n}{\ell_{1}\ell_{2}}\right) \\
&\le H(2, 1, 1) = \f{3^{2}\cdot 2!}{2^{3}\cdot 3!} = \f{3}{8},
\end{align}
and for $i \ge 2$,
\begin{align}
\f{F_{n}(\ell_{i+1})}{F_{n}(\ell_{i})} &= H\left(\ell_{i}, \ell_{i+1}-\ell_{i}, \f{n}{\ell_{i}\ell_{i+1}}\right) \\
&\le H(3, 2, 1) = \f{5^{3}\cdot 3!}{3^{5}\cdot 5!} = \f{25}{972}.
\end{align}
Therefore,
\begin{align}
F_{n}(\ell_{i}) \le
\begin{dcases}
F_{n}(2) & (i = 1) \\
\f{3}{8}\left(\f{25}{972}\right)^{i-2}F_{n}(2) & (i \ge 2)
\end{dcases}.
\end{align}
Thus,
\begin{align}
\vt{D} &\le \sum_{i=1}^{w}F_{\ell_{i}} \le F_{n}(2) + \sum_{i=2}^{w}\f{3}{8}\left(\f{25}{972}\right)^{i-2}F_{n}(2) \\
&\le F_{n}(2) + \f{3}{8}\sum_{i=2}^{\infty}\left(\f{25}{972}\right)^{i-2}F_{n}(2) \\
&= \left(1 + \f{3}{8}\cdot\f{1}{1-\f{25}{972}}\right)F_{n}(2) = \f{2623}{1894}F_{n}(2) \\
&= \kappa_{1}\cdot 2^{\f{n}{2}}\left(\f{n}{2}\right)!.
\end{align}

When $n$ is odd, we have $\ell_{1} \ge 3$ and $\ell_{i+1} - \ell_{i} \ge 2$ for all $i$. Hence,
\begin{align}
\f{F_{n}(\ell_{i+1})}{F_{n}(\ell_{i})} &\le H(3, 2, 1) =  \f{25}{972}.
\end{align}
Therefore, for all $1 \le i \le w$,
\begin{align}
F_{n}(\ell_{i}) &\le \left(\f{25}{972}\right)^{i-1}F_{n}(3).
\end{align}
Thus,
\begin{align}
\vt{D} &\le \sum_{i=1}^{w}F_{n}(\ell_{i}) \le \sum_{i=1}^{w}\left(\f{25}{972}\right)^{i-1}F_{n}(3) \\
&\le \sum_{i=1}^{\infty}\left(\f{25}{972}\right)^{i-1}F_{n}(3) \\
&= \f{1}{1-\f{25}{972}}F_{n}(3) = \f{972}{947}F_{n}(3) \\
&= \kappa_{2}\cdot 3^{\f{n}{3}}\left(\f{n}{3}\right)!.
\end{align}

Combining (i) and (ii), we obtain
\begin{align}
\vt{D} \le \kappa_{1}\cdot 2^{\f{n}{2}}\left(\f{n}{2}\right)!,
\end{align}
and, if $n$ is odd,
\begin{align}
\vt{D} \le \kappa_{2}\cdot 3^{\f{n}{3}}\left(\f{n}{3}\right)!.
\end{align}
\end{proof}

\section{Passport $[b^{q}, b^{q}, n]$ with Genus $\ge 2$}
\label{sec:class3}

In this section, we consider the passports of the form $[b^{q}, b^{q}, n]$,
where $n = qb$, $q \ge 2$, and the genus is at least~2.
A dessin with such a passport has a single face, equal numbers of black and white vertices, and all vertices have the same valency.

By \eqref{eq:genus}, the genus of such a dessin is
\begin{align}
\label{eq:genusclass3}
g = \f{n - (2q+1)}{2} + 1 = \f{q(b-2)+1}{2} \ge 2.
\end{align}
Since $g$ is an integer, both $b$ and $q$ must be odd. Consequently, $n$ is also odd, and in particular $b, q \ge 3$.

\figref{fig:dessins-class3} shows examples of \ddes with passport $[3^{3}, 3^{3}, 9]$. The dessin on the left has
automorphism group $C_{3}$, while the dessin on the right has trivial automorphism
group.

\begin{figure}[htbp]
\centering
\includegraphics[width=150mm]{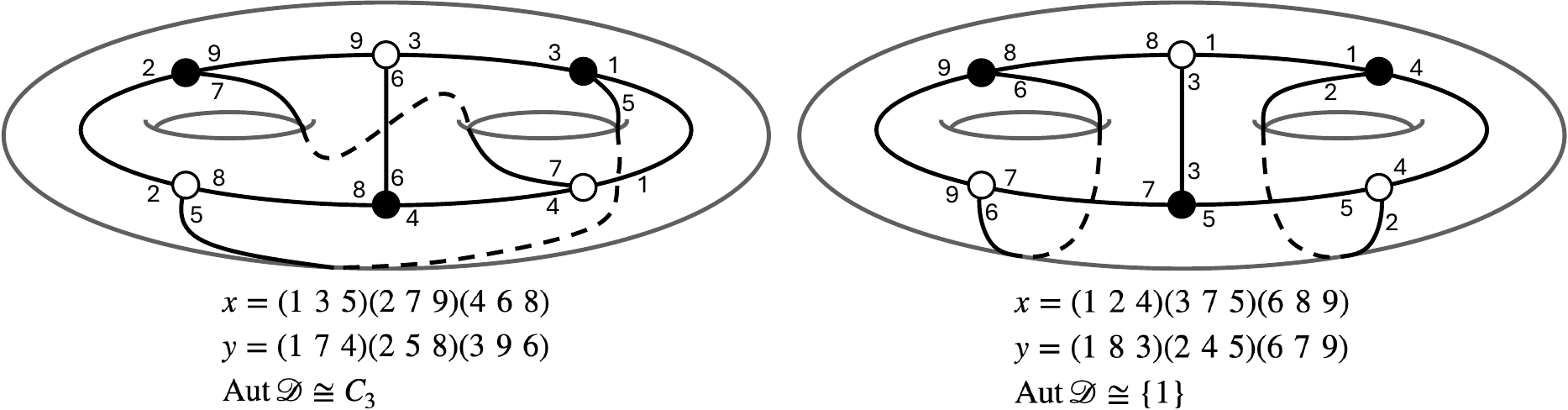} 
\caption{\ddes with passport $[3^{3}, 3^{3}, 9]$}
\label{fig:dessins-class3}
\end{figure}

By \cite[Theorem~6.1]{Ohnishi26}, the passport $[a^{p}, b^{q}, n]$ has a regular dessin if and only if
$\gcd(p, q) = 1$. Since $\gcd(q, q)=q \ge 2$, the passport $[b^{q}, b^{q}, n]$ never admits a regular dessin.

We now prove that every uniform passport $[b^{q}, b^{q}, n]$ ($q \ge 2$)
of genus at least~2 admits a dessin with trivial automorphism group.
The proof is divided into two cases, according as $b=3$ or $b\ge5$.

\subsection{The Subcase $b = 3$}

We construct permutations $x$ and $y$ for each passport
$[3^{q}, 3^{q}, 3q]$ of genus at least~2
such that the corresponding dessin has a trivial automorphism group.
By \eqref{eq:genusclass3}, $q$ is odd and $q \ge 3$.

Since the automorphism group is invariant under permutations of the three partitions,
it suffices to consider the passport
$[3q,3^{q},3^{q}]$,
that is, the case where $x$, $y$, and
$z=(xy)^{-1}$ have cycle types
$(3q)$, $(3^{q})$, and $(3^{q})$, respectively.
Since the automorphism group is also invariant under relabeling of the edges,
we may fix $x$.

\begin{prop}
\label{prop:c3b3}
For each odd integer $q \ge 3$, let $x = (1\ 2\ \ldots\ 3q) \in S_{3q}$ and define $y \in S_{3q}$ for each $q$ as follows:
\begin{align}
y &= \A \B_{0}\B_{1}\cdots\B_{\f{q-5}{2}} \G, \\
\A &= (1\ 2\ 4)(3\ 7\ 5), \\
\B_{i} &=
\begin{dcases}
(11v{+}6\ 11v{+}9\ 11v{+}10)(11v{+}8\ 3q{-}v\ 11v{+}13) & (i \equiv 0 \npmod{2}) \\
(11v\ 11v{+}1\ 11v{+}4)(11v{+}3\ 11v{+}7\ 11v{+}5) & (i \equiv 1 \npmod{2})
\end{dcases}, \\
&\text{where}\ v = \fl{\f{i}{2}}, \\
\G &=
\begin{dcases}
(11w{+}11\ 11w{+}12\ 11w{+}14) & (q \equiv 1 \npmod{4}) \\
(11w{+}6\ 11w{+}8\ 11w{+}9) & (q \equiv 3 \npmod{4})
\end{dcases}, \\
\label{eq:yqdef}
&\text{where}\ w = \fl{\f{q-3}{4}}.
\end{align}
Then the following hold:
\begin{enumerate}
\item\label{itm:c3b3-1} $y$ has cycle type $(3^{q})$.
\item\label{itm:c3b3-2} $(xy)^{-1}$ has cycle type $(3^{q})$.
\item\label{itm:c3b3-3} For the \dde $\msD$ corresponding to $x$ and $y$, we have $\AD \cong \{ 1 \}$.
\end{enumerate}
\end{prop}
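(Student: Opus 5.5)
This is an explicit construction, so the proof will verify the three claims directly from the definition of $y$, organized so that the computation reduces to a few repeating local patterns.

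\textbf{Claim \ref{itm:c3b3-1}.} We check that the supports of $\A$, the $\B_i$ and $\G$ are pairwise disjoint and together cover $\{1,\dots,3q\}$. Counting gives $2+2\cdot\frac{q-3}{2}+1=q$ three-cycles, so disjointness and coverage yield cycle type $(3^{q})$.

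For coverage, group the $\B_i$ in pairs $(\B_{2v},\B_{2v+1})$. Such a pair uses the interval $11v+6,\dots,11v+18$ with two exceptions: the value $11v+17$ is left out, and the point $3q-v$ is taken from the top of the range. The top points $3q,3q-1,\dots$ are thus consumed one per pair, in decreasing order. Meanwhile $\A$ covers $\{1,\dots,7\}$ together with the low points of the first block. We then do the bookkeeping separately for $q\equiv1$ and $q\equiv3 \pmod 4$. In each case we show that the final $\G$ (which depends on $w$) fills exactly the leftover gap between the last regular block and the descending run of top points. We also confirm the small cases $q=3,5,7$ by hand.

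\textbf{Claim \ref{itm:c3b3-2}.} We compute $xy$ locally. Since $x$ is $i\mapsto i+1$ (mod $3q$), we have $xy(i)=y(i)+1$. The plan is to show that the pair $\B_{2v}\B_{2v+1}$ acts on its block in a translation-equivariant way: shifting $v$ to $v+1$ shifts every cycle of $xy$ by $11$, apart from the single point $3q-v$, which moves by $-1$. This lets us list the cycles of $xy$ produced by a generic pair and check they are all $3$-cycles. Cycles that straddle consecutive blocks, or the boundary $\A$/$\B_0$, are handled by one explicit generic computation. The remaining boundary cases are then checked explicitly: the end interacting with $\G$ and the wrap-around $3q\mapsto 1$, once for each residue of $q \pmod 4$.

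\textbf{Claim \ref{itm:c3b3-3}.} By \corref{cor:ADtrivial}\ref{itm:ADtrivial2} combined with \propref{prop:D}, it suffices to show that $x^{n/\ell}$ does not commute with $y$ for each prime $\ell\mid 3q$. Equivalently, we need $x^{k}yx^{-k}\ne y$ for $k=n/\ell$, that is, $y(i+k)\ne y(i)+k$ for some $i$. We exhibit a single witness $i$ depending on $k$. Take $i=1$: then $y(1)=2$, so we only need $y(1+k)\ne 2+k$. The point $1+k$ lies in a block determined by $k$. For $\ell=3$ we have $k=q$. For $\ell\ge 3$ in general, $k\le q$, so $1+k$ lies inside $\A$ or a $\B$-block, or is a top point $3q-v$. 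We would try $i=1$ and $i=3$ (where $y(3)=7$) and argue that at least one of $y(1+k)=2+k$, $y(3+k)=7+k$ fails. The reason is that the offsets $+1$ and $+4$ cannot both occur at the corresponding positions of the block structure. An alternative witness is the special point $3q$, whose image $y(3q)=13$ is far from $3q$. Then $x^{k}yx^{-k}$ sends $3q-k$ to $13-k$ mod $3q$, and this fails to equal $y(3q-k)$ unless $3q-k$ is one of the top points, which can be ruled out for $k=3q/\ell$ with $\ell\ge3$.

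\textbf{Main obstacle.} The main obstacle is the index bookkeeping in claim \ref{itm:c3b3-2} at the two ends, where $\A$, the top points $3q-v$ and $\G$ interact. Verifying that the case split modulo $4$ closes every cycle correctly is where errors are likely. I would first compute $q=3,5,7,9$ explicitly to confirm the pattern, then write the generic computation.
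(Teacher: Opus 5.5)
Your route is the paper's. In (1) you count the $q$ three-cycles and check coverage of $\{1,\dots,3q\}$ separately for $q\equiv 1,3\pmod 4$. In (2) you use $xy(s)=y(s)+1$ and list the $3$-cycles of $xy$ in an $11$-periodic pattern, doing the $\alpha$ end, the $\gamma$ end and the wrap-around by hand. In (3) you use the witnesses $e=1$ and $e=3$.

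There is one slip in your bookkeeping. The pair $\beta_{2v}\beta_{2v+1}$ omits \emph{both} $11v+7$ and $11v+17$ from $[11v+6,11v+18]$. The first comes from $\beta_{2v-1}$ (or $\alpha$ if $v=0$), the second from $\beta_{2v+2}$ or $\gamma$. As you describe it, the pair would have $13$ points instead of $12$.

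The content behind ``the offsets $+1$ and $+4$ cannot both occur'' is exactly the paper's case analysis:
\begin{itemize}
\item $y(s)=s+1$ only for $s=1$, for $s\equiv 0,9\pmod{11}$, and for the point $11w+11$ resp.\ $11w+8$ of $\gamma$;
\item $y(s)=s+4$ only for $s\equiv 3\pmod{11}$;
\item the cycles $(11v{+}8\ \ 3q{-}v\ \ 11v{+}13)$ never produce either offset, because $v\le (q-5)/4$.
\end{itemize}
So $e=1$ leaves only $k\equiv 7,8,10$, and $e=3$ only $k\equiv 0\pmod{11}$. Your reduction to $k=3q/\ell$ via Proposition~\ref{prop:D} is valid. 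Since it forces $k\le q$, it makes the exclusions for the top-point cycles automatic. But it buys little, because the residue argument already covers every $1\le k\le 3q-1$, which is why the paper does without it.

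You should drop the ``alternative witness'' $3q$, because the claim behind it is false. Take $q=15$ and $k=q=15$ (the case $\ell=3$). The point $3q-k=30$ lies in the cycle $(30\ 43\ 35)$ of $\beta_4$, and $y(30)=43\equiv 13-15\pmod{45}$. So your test gives equality at $3q-k$, although $30$ is not a top point (those are $45,44,43$). The coincidence is $y(11v+8)=3q-v=(11v+8)+13$ for $v=(q-7)/4$. Your proviso ``unless $3q-k$ is a top point'' overlooks this. The witnesses $e=1,3$ are the ones that actually work.
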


\begin{rem}
For $q = 3, 5, 7,$ and $9$, the permutations $y$ and $(xy)^{-1}$ are as follows:
\begin{itemize}
\item $q=3$:
\begin{align}
y&=(1\ 2\ 4)(3\ 7\ 5)(6\ 8\ 9), \\
(xy)^{-1}&=(1\ 8\ 3)(2\ 4\ 5)(6\ 7\ 9)
\end{align}
This pair $(x,y)$ corresponds to the dessin on the right in \figref{fig:dessins-class3} after permuting the roles of
$y \mapsto x$, and $(xy)^{-1}\mapsto y$.
\item $q=5$:
\begin{align}
y&=(1\ 2\ 4)(3\ 7\ 5)(6\ 9\ 10)(8\ 15\ 13)(11\ 12\ 14), \\
(xy)^{-1}&=(1\ 8\ 3)(2\ 4\ 5)(6\ 7\ 10)(9\ 13\ 11)(12\ 14\ 15)
\end{align}
\item $q=7$:
\begin{align}
y&=(1\ 2\ 4)(3\ 7\ 5)(6\ 9\ 10)(8\ 21\ 13)(11\ 12\ 15)(14\ 18\ 16)(17\ 19\ 20), \\
(xy)^{-1}&=(1\ 8\ 3)(2\ 4\ 5)(6\ 7\ 10)(9\ 13\ 11)(12\ 15\ 16)(14\ 21\ 19)(17\ 18\ 20)
\end{align}
\item $q=9$:
\begin{align}
y &= (1\ 2\ 4)(3\ 7\ 5)(6\ 9\ 10)(8\ 27\ 13)(11\ 12\ 15)(14\ 18\ 16)(17\ 20\ 21) \\
&\q\ (19\ 26\ 24)(22\ 23\ 25), \\
(xy)^{-1}&= (1\ 8\ 3)(2\ 4\ 5)(6\ 7\ 10)(9\ 13\ 11)(12\ 15\ 16)(14\ 27\ 19)(17\ 18\ 21) \\
&\q\ (20\ 24\ 22)(23\ 25\ 26)
\end{align}
\end{itemize}
These examples illustrate the general pattern of the construction.
\end{rem}

\begin{proof}
\ref{itm:c3b3-1}
In \eqref{eq:yqdef}, every cycle in $y$ has length~$3$, and the number of these cycles is
\begin{align}
2 + 2\cdot\left(\f{q-5}{2}+1\right) + 1 = q.
\end{align}
Therefore, to prove that $y$ has cycle type $(3^{q})$, it suffices to show that every element of $\{ 1, \ldots, 3q \}$ appears
in \eqref{eq:yqdef}.
\HL

\item[(i)] Case $q \equiv 1 \pmod{4}$.

We have
\begin{align}
w &=  \fl{\f{q-3}{4}} = \f{q-5}{4},
\end{align}
and hence
\begin{align}
3q = 12w+15.
\end{align}
Moreover, the index $i$ of $\B_{i}$ ranges over
\begin{align}
&0 \le i \le \f{q-5}{2} =  2w.
\end{align}

We now verify that every element $s \in \{1, \ldots, 3q\} = \{1, \ldots, 12w+15\}$ appears in \eqref{eq:yqdef}.
\begin{itemize}
\item If $s \in \{ 1, 2, 3, 4, 5, 7 \}$, then $s$ appears in $\A$.
\item If $s \in \{ 6, 8, 9, 10 \}$, then $s$ appears in $\B_{0}$.
\item If $11 \le s \le 11w+10$, write $s = 11t+u$ with $0 \le u \le 10$. Then $1 \le t \le w$,
and $u$ ranges from $0$ to $10$ for each $t$. In this case, $s$ appears in
\begin{align}
\begin{dcases}
\B_{2t-2} & (u = 2) \\
\B_{2t-1} & (u \in \{ 0, 1, 3, 4, 5, 7 \}) \\
\B_{2t} & (u \in \{ 6, 8, 9, 10 \})
\end{dcases}.
\end{align}
\item The elements $11w+11$, $11w+12$, and $11w+14$ appear in $\G$.
\item The element $11w+13$ appears in $\B_{2w} = \B_{\f{q-5}{2}}$.
\item If $11w+15 \le s \le 12w+15$, then $s$ appears in $\B_{2(12w+15-s)}$ as $3q-(12w+15-s)$.
\end{itemize}
Therefore, every element of $\{ 1, \ldots, 3q \}$ appears in  \eqref{eq:yqdef}.
\HL

\item[(ii)] Case $q \equiv 3 \pmod{4}$.

We have
\begin{align}
w &=  \fl{\f{q-3}{4}} = \f{q-3}{4},
\end{align}
and hence
\begin{align}
3q = 12w+9.
\end{align}
Moreover, the index $i$ of $\B_{i}$ ranges over
\begin{align}
0 \le i \le \f{q-5}{2} =  2w-1 \q (q \ge 7).
\end{align}

We now verify that every element $s \in \{1, \ldots, 3q\} = \{1, \ldots, 12w+9\}$ appears in \eqref{eq:yqdef}.
\begin{itemize}
\item If $s \in \{ 1, 2, 3, 4, 5, 7 \}$, then $s$ appears in $\A$.
\item The elements $11w+6$, $11w+8$, and $11w+9$ appear in $\G$.
\end{itemize}
If $q \ge 7$, then
\begin{itemize}
\item If $s \in \{ 6, 8, 9, 10 \}$, then $s$ appears in $\B_{0}$.
\item If $11 \le s \le 11w+5$, write $s = 11t+u$ with $0 \le u \le 10$. Then $1 \le t \le w$,
and $u$ ranges from $0$ to $10$ when $1 \le t \le w-1$ and $0$ to $5$ when $t = w$.
In this case, $s$ appears in
\begin{align}
\begin{dcases}
\B_{2t-2} & (u = 2) \\
\B_{2t-1} & (u \in \{ 0, 1, 3, 4, 5, 7 \}) \\
\B_{2t} & (u \in \{ 6, 8, 9, 10 \})
\end{dcases}.
\end{align}
\item The element $11w+7$ appears in $\B_{2w-1} = \B_{\f{q-5}{2}}$.
\item If $11w+10 \le s \le 12w+9$, then $s$ appears in $\B_{2(12w+9-s)}$ as $3q-(12w+9-s)$.
\end{itemize}
Therefore, every element of $\{ 1, \ldots, 3q \}$ appears in  \eqref{eq:yqdef}.
\HL

By (i) and (ii), we conclude that $y$ has cycle type $(3^{q})$.
\HL

\item[\ref{itm:c3b3-2}]
Since $xy$ and $(xy)^{-1}$ have the same cycle type, it suffices to show that $xy$ has cycle type $(3^q)$.
We now prove this.

\tabref{tab:xycycles} lists the $q$ cycles of length~3 in $xy$ arising from \eqref{eq:yqdef}.

\begin{table}[htbp]
\centering
\small
\begin{tabular}{|c|c|c|}
\hline
$s$ & $y\cdot s$ & $xy\cdot s$ \\
\hline
1 & 2 & 3 \\
3 & 7 & 8 \\
8 & $3q$ & 1 \\
\hline
2 & 4 & 5 \\
5 & 3 & 4 \\
4 & 1 & 2 \\
\hline
\multicolumn{3}{|c|}{$6 \le s \le 11w+5$ ($s \ne 8$), $s=11w+8$} \\
\hline
$11v+6$ & $11v+9$ & $11v+10$ \\
$11v+10$ & $11v+6$ & $11v+7$ \\
$11v+7$ & $11v+5$ & $11v+6$ \\
\hline
$11v+9$ & $11v+10$ & $11v+11$ \\
$11(v+1)$ & $11(v+1)+1$ & $11v+13$ \\
$11v+13$ & $11v+8$ & $11v+9$ \\
\hline
$11v+1$ & $11v+4$ & $11v+5$ \\
$11v+5$ & $11v+3$ & $11v+4$ \\
$11v+4$ & $11v$ & $11v+1$ \\
\hline
$11v+3$ & $11v+7$ & $11v+8$ \\
$11v+8$ & $3q-v$ & $3q-(v-1)$ \\
$3q-(v-1)$ & $11(v-1)+13$ & $11v+3$ \\
\hline
\multicolumn{3}{|c|}{Case $q \equiv 1 \pmod{4}$} \\
\hline
$11w+6$ & $11w+9$ & $11w+10$ \\
$11w+10$ & $11w+6$ & $11w+7$ \\
$11w+7$ & $11w+5$ & $11w+6$ \\
\hline
$11w+9$ & $11w+10$ & $11w+11$ \\
$11w+11$ & $11w+12$ & $11w+13$ \\
$11w+13$ & $11w+8$ & $11w+9$ \\
\hline
$11w+12$ & $11w+14$ & \makecell{$3q-w$\\($=11w+15$)} \\
$3q-w$ & $11w+13$ & $11w+14$ \\
$11w+14$ & $11w+11$ & $11w+12$ \\
\hline
\multicolumn{3}{|c|}{Case $q \equiv 3 \pmod{4}$}\\
\hline
$11w+6$ & $11w+8$ & $11w+9$ \\
$11w+9$ & $11w+6$ & $11w+7$ \\
$11w+7$ & $11w+5$ & $11w+6$ \\
\hline
\end{tabular}
\vspace{1\baselineskip}
\caption{Cycles of $xy$ ($s = 1, \ldots, 3q$)}\label{tab:xycycles}
\end{table}

We now verify that every element $s \in \{1, \ldots, 3q\}$ appears in one of the cycles of $xy$.
Note that
\begin{align}
3q =
\begin{dcases}
12w + 15 & (q \equiv 1 \pmod{4}) \\
12w + 9 & (q \equiv 3 \pmod{4})
\end{dcases}.
\end{align}
\begin{itemize}
\item If $1 \le s \le 5$ or $s = 8$, then $s$ appears in the first two cycles.
\item If $6 \le s \le 11w+5$ and $s \ne 8$, write $s = 11t+u$ with $0 \le u \le 10$.
Then, if $u = 0$, we have $t \ge 1$ and $s$ appears as $11(v+1)$ with $v = t-1$.
If $u = 2$, we have $t \ge 1$ and $s$ appears as $11v+13$ with $v = t-1$.
Otherwise it appears
as $11v+u$ with $v = t$.
\item The element $s = 11w+8$ appears as $11v+8$ with $v=w$.
\item Suppose that $q \equiv 1 \pmod{4}$:
\begin{itemize}
\item If $11w+6 \le s \le 11w+14$ and $s \ne 11w+8$, then $s$ appears in the last three cycles.
\item If $s = 11w+15$, then $s$ appears as $3q-w$.
\item If $11w+16 \le s \le 12w+15$, then $s$ appears as $3q - v$, where $v = 12w+15 - s$.
\end{itemize}
\item Suppose that $q \equiv 3 \pmod{4}$:
\begin{itemize}
\item If $11w+6 \le s \le 11w+9$ and $s \ne 11w+8$, then $s$ appears in the last cycle.
\item If $11w+10 \le s \le 12w+9$, then $s$ appears as $3q-v$, where $v = 12w+9 - s$.
\end{itemize}
\end{itemize}

Therefore, every element appears in exactly one cycle, and hence $xy$ has cycle type $(3^q)$.
\HL

\item[\ref{itm:c3b3-3}]
By \corref{cor:ADtrivial}\ref{itm:ADtrivial2}, it suffices to show that
\begin{align}
\text{for every}\ 1 \le k \le 3q-1, \ x^{k}y \ne yx^{k}.
\end{align}
Equivalently,
\begin{align}
&\text{for every}\ 1 \le k \le 3q-1,\\
\label{eq:3qxkytrivial}
&\q\text{there exists}\ e \in E = \{ 1, \ldots, 3q \}\ \text{such that}\ x^{k}y\cdot e \ne yx^{k}\cdot e.
\end{align}

In what follows, the elements of $E$ are taken modulo $3q$.
\HL

\item[(i)] Let $e = 1$. Then
\begin{align}
x^{k}y\cdot e &= x^{k} \cdot 2 = k+2, \\
yx^{k}\cdot e &= y\cdot(k+1).
\end{align}
The equality $x^{k}y\cdot e = yx^{k}\cdot e$ holds only if
$k+1$ is mapped to $k+2$ by $y$. By \eqref{eq:yqdef}, this occurs only when
\begin{align}
k + 1 = 1,\ 11v+9,\ 11v+8,\ 3q-v,\ 11v,\ 11w+11,\ \text{or}\ 11w+8,
\end{align}
that is,
\begin{align}
k = 3q,\ 11v+8,\ 11v+7,\ 3q-v-1,\ 11v-1,\ 11w+10,\ \text{or}\ 11w+7.
\end{align}

Since $1 \le k \le 3q-1$, the case $k = 3q$ is excluded. 

Assume that $k = 11v+7$. If $x^{k}y\cdot e = yx^{k}\cdot e$,
then we must have
\begin{align}
3q - v = 11v+8+1,
\end{align}
and hence
\begin{align}
\label{eq:k11v7-1}
v = \f{q-3}{4}.
\end{align}
On the other hand, by \eqref{eq:yqdef}, this case corresponds to
\begin{align}
\label{eq:qicond}
0 \le i \le \f{q-5}{2},\ v = \f{i}{2}.
\end{align}
Therefore,
\begin{align}
\label{eq:k11v7-2}
v \le \f{q-5}{4}.
\end{align}
However, \eqref{eq:k11v7-1} and \eqref{eq:k11v7-2} are incompatible.
Therefore, the case $k = 11v+7$ is excluded.

Assume that $k = 3q-v-1$. If $x^{k}y\cdot e = yx^{k}\cdot e$,
then we must have
\begin{align}
11v+13 = 3q-v+1,
\end{align}
and hence
\begin{align}
\label{eq:k3q-v}
v = \f{q-4}{4}.
\end{align}
On the other hand, this case also corresponds to \eqref{eq:qicond}, and therefore
\eqref{eq:k11v7-2} must hold. Since \eqref{eq:k11v7-2} and \eqref{eq:k3q-v} are incompatible,
the case $k = 3q-v-1$ is excluded.

Thus, for $e = 1$, the equality $x^{k}y\cdot e = yx^{k}\cdot e$ holds only if
\begin{align}
\label{eq:ke1}
k = 11v+8,\ 11v-1,\ 11w+7,\ \text{or}\ 11w+10.
\end{align}

\item[(ii)] Let $e = 3$. Then
\begin{align}
x^{k}y\cdot e &= x^{k} \cdot 7 = k+7, \\
yx^{k}\cdot e &= y\cdot(k+3).
\end{align}
The equality $x^{k}y\cdot e = yx^{k}\cdot e$ holds only if
$k+3$ is mapped to $k+7$ by $y$. By \eqref{eq:yqdef}, this occurs only when
\begin{align}
k + 3 &= 11v+3,\ 11v+8,\ \text{or}\ 3q-v,
\end{align}
that is,
\begin{align}
k &= 11v,\ 11v+5,\ \text{or}\ 3q-v-3.
\end{align}

Since $1 \le k \le 3q-1$, the case $v = 0$ is excluded from $k = 11v$. 

Assume that $k = 11v+5$. If $x^{k}y\cdot e = yx^{k}\cdot e$,
then we must have
\begin{align}
3q - v = 11v+8+4,
\end{align}
and hence
\begin{align}
\label{eq:qicond2}
v = \f{q-4}{4}.
\end{align}
On the other hand, by \eqref{eq:yqdef}, we have \eqref{eq:k11v7-2}.
Since \eqref{eq:k11v7-2} and \eqref{eq:qicond2} are incompatible, the case
$k = 11v+5$ is excluded.

Assume that $k = 3q-v-3$. If $x^{k}y\cdot e = yx^{k}\cdot e$,
then we must have
\begin{align}
11v+13 = 3q-v+4,
\end{align}
and hence
\begin{align}
v = \f{q-3}{4}.
\end{align}
This is incompatible with \eqref{eq:k11v7-2} as well. Hence $k = 3q-v-3$ is also excluded.

Thus, for $e = 3$, the equality $x^{k}y\cdot e = yx^{k}\cdot e$ holds only if
\begin{align}
\label{eq:ke2}
k = 11v\q (v \ge 1).
\end{align}

The conditions \eqref{eq:ke1} and \eqref{eq:ke2} cannot simultaneously hold for any $k$. Therefore
\eqref{eq:3qxkytrivial} holds.
Consequently, for the corresponding \dde\ $\msD$, we have
$\AD \cong \{ 1 \}$.
\end{proof}

\begin{prop}
\label{prop:class3beq3}
If a uniform passport $[3^{q}, 3^{q}, n]$ $(n = 3q)$ has genus at least~$2$, then it admits a \dde with a trivial automorphism group.
\end{prop}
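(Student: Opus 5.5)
This is essentially a corollary of \propref{prop:c3b3}; the only work is translating between the passport as stated and the form in which the construction was carried out.

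\textbf{Parameters.} Let $[3^{q},3^{q},n]$ with $n=3q$ have genus at least~$2$. By \eqref{eq:genusclass3} with $b=3$, the genus is $g=(q+1)/2$. Integrality forces $q$ to be odd, and $g\ge 2$ forces $q\ge 3$. So $q$ is an odd integer with $q\ge 3$, which is exactly the range covered by \propref{prop:c3b3}.

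\textbf{The dessin.} Take $x=(1\ 2\ \ldots\ 3q)$ and $y$ as in \eqref{eq:yqdef}, and put $z=(xy)^{-1}$. By \propref{prop:c3b3}\ref{itm:c3b3-1},\ref{itm:c3b3-2}, the cycle types of $x,y,z$ are $(3q)$, $(3^{q})$, $(3^{q})$. The group $\gen{x,y}$ is transitive on $\{1,\dots,3q\}$ because it contains the $3q$-cycle $x$. Hence the triple determines a dessin $\msD$ with passport $[3q,3^{q},3^{q}]$. By \propref{prop:c3b3}\ref{itm:c3b3-3}, $\AD$ is trivial.

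\textbf{Transfer to the requested passport.} We need a dessin with passport $[3^{q},3^{q},3q]$, so the roles of the three generators must be permuted. Set $x'=y$ and $y'=z$. Then
\begin{align}
(x'y')^{-1}=(y(xy)^{-1})^{-1}=(yy^{-1}x^{-1})^{-1}=x,
\end{align}
so the cycle types of $x',y',(x'y')^{-1}$ are $(3^{q})$, $(3^{q})$, $(3q)$. Moreover $\gen{x',y'}=\gen{y,z}=\gen{x,y}$, so this group is still transitive and has the same centralizer in $\Sym(E)$. The resulting dessin $\msD'$ therefore has passport $[3^{q},3^{q},n]$ and $\Aut\msD'=\AD$ is trivial. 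This is the invariance of the monodromy and automorphism groups under permuting black vertices, white vertices and faces, noted after \eqref{eq:cxcy}.

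No step here is a real obstacle, since all the combinatorial verification is already contained in \propref{prop:c3b3}. The only points needing care are confirming that genus at least~$2$ gives precisely odd $q\ge3$, and checking explicitly that the cyclic reassignment of the generators preserves the monodromy group, as done above.
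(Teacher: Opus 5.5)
Your proposal is correct and follows the paper's proof: you use the genus formula to get $q$ odd with $q\ge 3$, then relabel $y$ and $(xy)^{-1}$ from \propref{prop:c3b3} as the new generators, so that the face permutation becomes the $3q$-cycle $x$. Your explicit checks that $(x'y')^{-1}=x$ and $\gen{x',y'}=\gen{x,y}$ simply spell out the invariance under permuting black vertices, white vertices and faces, which the paper invokes implicitly.
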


\begin{proof}
By \eqref{eq:genusclass3}, we have
\begin{align}
\f{q+1}{2} \ge 2, \quad q+1 \equiv 0 \pmod{2}.
\end{align}
Hence $q \ge 3$ and $q$ is odd.

For such $q$, let us relabel the permutations $y$ and $(xy)^{-1}$ in \propref{prop:c3b3}
as $x$ and $y$, respectively.
Then $x$, $y$, and $(xy)^{-1}$ have cycle types $(3^{q})$, $(3^{q})$, and $(n)$, respectively.

Let the monodromy group $G = \gen{x, y}$. Then the corresponding dessin $\msD$ satisfies $\AD \cong \{ 1 \}$.
\end{proof}

\subsection{The Subcase $b \ge 5$}

The following lemma will be used in the proof of the case $b \ge 5$.

\begin{lemma}
\label{lem:gamma-at}
Let $\Gamma$ be the gamma function. If $a > 0$ and $t \ge 1$, then
\begin{align}
\Gamma(a+t+1) \le (a+t)^{t}\,\Gamma(a+1).
\end{align}
Equality holds if and only if $t=1$.
\end{lemma}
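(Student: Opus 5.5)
The natural route is to compare $\log\Gamma$ with a linear function via convexity, since $\log\Gamma$ is convex on $(0,\infty)$. Concretely, I will show that
\begin{align}
\phi(t) \ceq t\log(a+t) + \log\Gamma(a+1) - \log\Gamma(a+t+1)
\end{align}
is nonnegative for $t\ge1$, and that it vanishes only at $t=1$.

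First, equality at $t=1$ is immediate, because $\Gamma(a+2)=(a+1)\Gamma(a+1)$.

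Second, I differentiate. Writing $\psi$ for the digamma function as in the proof of \lemref{lem:Dupper},
\begin{align}
\phi'(t) = \log(a+t) + \f{t}{a+t} - \psi(a+t+1).
\end{align}
The plan is to show $\phi'(t)>0$ for $t\ge 1$, which gives $\phi(t)>\phi(1)=0$ for $t>1$ and settles both the inequality and the equality case.

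Third, I handle the sign of $\phi'$. Put $u=a+t>1$, so that $\psi(u+1)=\psi(u)+1/u$. The standard estimate $\psi(u)<\log u - \f{1}{2u}$ for $u>0$ then gives
\begin{align}
\psi(u+1) < \log u + \f{1}{2u}.
\end{align}
Hence
\begin{align}
\phi'(t) > \f{t}{u} - \f{1}{2u} = \f{2t-1}{2u} > 0 \q (t\ge 1).
\end{align}
This is the main step.

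If one prefers not to quote the sharp digamma bound, there is an elementary alternative. Log-convexity of $\Gamma$ gives the mean value bound
\begin{align}
\log\Gamma(a+t+1)-\log\Gamma(a+1) \le t\,\psi(a+t+1),
\end{align}
which is of the wrong direction to be useful directly. I would therefore rely on the digamma inequality $\psi(u)<\log u-1/(2u)$, which follows from the integral representation of $\psi$ or from $\psi(u)=\log u-\f{1}{2u}-2\int_0^\infty \f{s\,ds}{(s^2+u^2)(e^{2\pi s}-1)}$.

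The main obstacle is exactly this digamma estimate. Once it is available, the argument is routine.

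A fallback, in case that estimate is not available, is to use log-convexity between the points $a+1$ and $a+t+1$ differently, for instance by Gautschi/Wendel-type inequalities such as $\Gamma(x+s)\le x^{s}\Gamma(x)$ for $0\le s\le1$, combined with the recursion for integer parts of $t$. Writing $t=m+s$ with $m\in\Zz$ and $0\le s<1$, the recursion turns $\Gamma(a+t+1)$ into $\Gamma(a+s+1)$ times $m$ linear factors, each at most $a+t$. The remaining factor $\Gamma(a+s+1)$ is then bounded by $(a+1)^{s}\Gamma(a+1)\le(a+t)^{s}\Gamma(a+1)$. Strictness for $t>1$ comes from the first linear factor being strictly less than $a+t$.
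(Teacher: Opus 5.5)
Your main argument is the paper's proof: the same auxiliary function ($\phi = F$), the same derivative $\log(a+t)+\frac{t}{a+t}-\psi(a+t+1)$, and a digamma upper bound. The one difference is that you invoke the sharper $\psi(u)<\log u-\frac{1}{2u}$, whereas the paper uses $\psi(u+1)<\log u+\frac{1}{u}$; that weaker bound already gives $F'(t)>\frac{t-1}{a+t}\ge 0$, so either works.

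There is a small slip in your fallback. For $1<t<2$ there is only one linear factor, and it equals $a+t$, so strictness there must come from $(a+1)^{s}<(a+t)^{s}$ rather than from the linear factors.
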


\begin{proof}
Define
\begin{align}
F(t)
&\ceq \log \frac{(a+t)^{t}\,\Gamma(a+1)}{\Gamma(a+t+1)} \\
&= t\log(a+t) + \log \Gamma(a+1) - \log \Gamma(a+t+1).
\end{align}
Then
\begin{align}
F(1)
&= \log \frac{(a+1)\Gamma(a+1)}{\Gamma(a+2)}
= \log \frac{\Gamma(a+2)}{\Gamma(a+2)}
= \log 1
= 0,
\end{align}
and
\begin{align}
\label{eq:gammaFt}
F'(t)
&= \log(a+t) + \frac{t}{a+t} - \psi(a+t+1),
\end{align}
where
\begin{align}
\psi(x)
\ceq \dv{x}\log \Gamma(x)
= \frac{\Gamma'(x)}{\Gamma(x)}
\end{align}
is the digamma function.

It is well known that
\begin{align}
\psi(x+1) < \log x + \frac{1}{x} \q (x>0).
\end{align}
Hence
\begin{align}
\psi(a+t+1) < \log(a+t) + \frac{1}{a+t}.
\end{align}
Therefore, by \eqref{eq:gammaFt},
\begin{align}
F'(t) &> \log(a+t) + \frac{t}{a+t} - \log(a+t) - \frac{1}{a+t} = \frac{t-1}{a+t}.
\end{align}
Since $t \ge 1$, we have $F'(t) \ge 0$, and moreover $F'(t) > 0$ whenever $t>1$.
Therefore, $F$ is increasing on $[1,\infty)$ and strictly increasing on $(1,\infty)$.
Hence
\begin{align}
F(t) \ge F(1) = 0.
\end{align}
Therefore,
\begin{align}
\log \frac{(a+t)^{t}\,\Gamma(a+1)}{\Gamma(a+t+1)} \ge 0,
\end{align}
and hence
\begin{align}
\Gamma(a+t+1) \le (a+t)^{t}\,\Gamma(a+1).
\end{align}
Equality holds if and only if $t=1$.
\end{proof}

\begin{prop}
\label{prop:class3bge5}
Every passport $[b^{q}, b^{q}, n]$ of genus~$\ge 2$ with $b \ge 5$ and $q \ge 2$ admits a dessin with a trivial automorphism group.
\end{prop}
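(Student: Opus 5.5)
By the invariance of $\Aut\msD$ under permutations of the three partitions, it suffices to treat the passport $[n,b^{q},b^{q}]$. So we fix the $n$-cycle $x=\sigman$ and look for $y\in N(b,q,b)$ with $y\notin D$.

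Since the genus is at least~$2$, \eqref{eq:genusclass3} forces $b,q$ to be odd. Hence $q\ge 3$, $b\ge 5$, and $n=bq$ is odd. As explained in Section~\ref{sec:estimation}, it is enough to prove $\vt{N(b,q,b)}>\vt{D}$. The two ingredients are:
\begin{itemize}
\item \lemref{lem:Nbqblower} for the lower bound on $\vt{N(b,q,b)}$ (its hypothesis, genus $\ge 1$, holds);
\item the odd-$n$ bound \eqref{eq:Dupperodd} of \lemref{lem:Dupper}, namely $\vt{D}\le\kappa_{2}\,3^{n/3}(n/3)!$.
\end{itemize}
The goal is therefore
\begin{align}
\f{(n-q)!}{2^{q(b-2)}\left(\f{q+1}{2}\right)!\left(\f{q-1}{2}\right)!}\left(\f{b-1}{2b}\right)^{\f{q-1}{2}} > \kappa_{2}\,3^{\f{n}{3}}\left(\f{n}{3}\right)!.
\end{align}

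The steps, in order, are as follows.

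First, since $q\le n/5$, we have $n-q\ge 4n/5>n/3$. This gives
\begin{align}
(n-q)!\ge \left(\f{n}{3}\right)!\left(\f{n}{3}\right)^{n-q-\f{n}{3}},
\end{align}
since every factor of $\Gamma(n-q+1)/\Gamma(n/3+1)$ is at least $n/3$. This cancels the factorial $(n/3)!$ on the right-hand side.

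Second, I bound the denominator factorials from above. By \lemref{lem:gamma-at} with $a=1$, or simply trivially, $\left(\f{q+1}{2}\right)!\left(\f{q-1}{2}\right)!\le\left(\f{q+1}{2}\right)^{q}$. I also use $\f{b-1}{2b}\ge\f{2}{5}$.

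After taking logarithms and dividing by $q$, the inequality reduces to
\begin{align}
\left(\f{2b}{3}-1\right)\log\f{bq}{3} > (b-2)\log 2+\f{b}{3}\log 3+\log\f{q+1}{2}+\f{1}{2}\log\f{5}{2}+\f{\log\kappa_{2}}{q}.
\end{align}
The left side grows like $\f{2b}{3}\log(bq)$. The right side is linear in $b$ with the smaller coefficient $\log 2+\f{1}{3}\log 3<\f{2}{3}\log\f{bq}{3}$, and it is only logarithmic in $q$. So the inequality holds for all large $b$ or $q$. I would make this rigorous by two monotonicity checks: show that the difference of the two sides is increasing in $b$ for fixed $q\ge3$, and increasing in $q$ for fixed $b\ge5$, by differentiating. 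This reduces everything to the corner case $b=5$, $q=3$.

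The main obstacle I expect is this smallest corner $(b,q)=(5,3)$, $n=15$, and possibly a few neighbours such as $(5,5)$ and $(7,3)$. There the crude estimates above are lossy: $\log(n/3)$ is small and the $2^{q(b-2)}$ term is significant. For these cases I would first fall back to the exact bound rather than the crude one, that is, \lemref{lem:Nbqblower} evaluated numerically against the exact sum $\vt{D}\le 3^{5}5!+5^{3}3!$ from \propref{prop:D}. If even that fails, I would use the full expression \eqref{eq:Nbqb} summed over all compositions. As a last resort, I would exhibit an explicit $y$ in the style of \propref{prop:c3b3} and verify \corref{cor:ADtrivial} directly.
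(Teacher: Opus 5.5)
Your top-level strategy matches the paper's. You reduce to $[n,b^{q},b^{q}]$, note that $b,q$ are odd (so $q\ge3$ and $n$ is odd), and prove $\vt{N(b,q,b)}>\vt{D}$ from \lemref{lem:Nbqblower} and \eqref{eq:Dupperodd}.

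The gap is in how you verify the resulting inequality. Your three simplifications are valid: $(n-q)!\ge(n/3)!\,(n/3)^{2n/3-q}$, $(\frac{q+1}{2})!(\frac{q-1}{2})!\le(\frac{q+1}{2})^{q}$, and $\frac{b-1}{2b}\ge\frac25$. But they are too lossy for small parameters. Your displayed logarithmic inequality is \emph{false} at the corner $(b,q)=(5,3)$: the left side is $\frac73\log5\approx3.76$ and the right side is about $5.07$. It also fails at $(5,5)$, $(5,7)$ and $(7,3)$. So the step ``monotonicity reduces everything to the corner $(5,3)$'' does not work as stated. Checking the corner with the sharper bounds proves only the corner. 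Nothing propagates through the monotonicity of the crude difference, because that difference is negative there.

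The argument can be repaired along your lines. Both monotonicity claims are true. The $q$-derivative of the difference of the two sides is $\frac{2b/3-1}{q}-\frac{1}{q+1}+\frac{\log\kappa_{2}}{q^{2}}>0$. The $b$-derivative is at least $\frac23\log5+\frac{7}{15}-\log2-\frac13\log3>0$. The crude inequality holds at $(9,3)$, $(7,5)$ and $(5,9)$, hence at every admissible odd pair except the four listed above. For those four, \lemref{lem:Nbqblower} against the exact sum in \propref{prop:D} wins by a wide margin; for instance $187110>3^{5}5!+5^{3}3!=29910$ when $n=15$. However, the reduction must be organized around these three base points, and the exceptional set must be determined explicitly. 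You guessed $(5,5)$ and $(7,3)$ but missed $(5,7)$.

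There is also a smaller issue. When $3\nmid n$, the quotient $\Gamma(n-q+1)/\Gamma(n/3+1)$ is not a finite product, so ``every factor is at least $n/3$'' does not apply. Replace it with, for example, integrating $\psi(s+1)>\log s$ over $[n/3,\,n-q]$.

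The paper avoids all of this by never coarsening the bound. It keeps the exact ratio $R(b,q)$ and proves $R(5,q+2)/R(5,q)>1$ and $R(b+2,q)/R(b,q)>1$, applying \lemref{lem:gamma-at} only to the Gamma quotients. It therefore needs just one base value, $R(5,3)=72919/11664>1$.
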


\begin{proof}
By the argument in Section~\ref{sec:estimation}, it suffices to show that
\begin{align}
\vt{N(b, q, b)} > \vt{D(n)},
\end{align}
for $N$ and $D$ defined in \eqref{eq:tncd}.

By \lemref{lem:Nbqblower}, we have
\begin{align}
\vt{N(b, q, b)} \ge \f{(n-q)!}{2^{q(b-2)}\left(\f{q+1}{2}\right)!\left(\f{q-1}{2}\right)!}\left(\f{b-1}{2b}\right)^{\f{q-1}{2}},
\end{align}
and by \lemref{lem:Dupper}, since $n = bq$ is odd,
\begin{align}
\vt{D(n)} \le \kappa_{2}\cdot 3^{\f{n}{3}}\left(\f{n}{3}\right)!\q \left(\left(\f{n}{3}\right)! \ceq \Gamma\left(\f{n}{3}+1\right)\right),
\end{align}
where $\kappa_{2} = 972/947$.
Hence
\begin{align}
\f{\vt{N(b,q,b)}}{\vt{D(n)}} &\ge \f{(n-q)!}{2^{q(b-2)}\left(\f{q+1}{2}\right)!\left(\f{q-1}{2}\right)!}\left(\f{b-1}{2b}\right)^{\f{q-1}{2}}
\left(\kappa_{2}\cdot 3^{\f{n}{3}}\left(\f{n}{3}\right)!\right)^{-1} \\
&= \f{(q(b-1))!}{\kappa_{2}\cdot 2^{q(b-2)}3^{\f{bq}{3}}\left(\f{bq}{3}\right)!\left(\f{q+1}{2}\right)!\left(\f{q-1}{2}\right)!}\left(\f{b-1}{2b}\right)^{\f{q-1}{2}}.
\end{align}

Let the right-hand side be denoted by $R(b,q)$.
We prove that $R(b,q) > 1$ for all $b \ge 5$ and $q \ge 3$.
To this end, it suffices to show the following:
\begin{enumerate}
\item\label{itm:R53} $R(5,3) > 1$.
\item\label{itm:R5q} For $b=5$, $R(b,q)$ is increasing in $q \ge 3$.
\item\label{itm:Rbq} For each fixed $q \ge 3$, $R(b,q)$ is increasing in $b$.
\end{enumerate}
\HL

\item[\ref{itm:R53}] We have
\begin{align}
R(5, 3) = \f{(3(5-1))!}{\kappa_{2}\cdot 2^{3(5-2)}3^{\f{15}{3}}\left(\f{15}{3}\right)!\left(\f{3+1}{2}\right)!\left(\f{3-1}{2}\right)!}\left(\f{5-1}{10}\right)^{\f{3-1}{2}} = \f{72919}{11664} > 1.
\end{align}
\HL

\item[\ref{itm:R5q}] We compute
\begin{align}
\f{R(5,q+2)}{R(5,q)} &= \f{(4(q+2))!}{2^{3(q+2)}3^{\f{5(q+2)}{3}}\left(\f{5(q+2)}{3}\right)!\left(\f{q+3}{2}\right)!\left(\f{q+1}{2}\right)!}\left(\f{2}{5}\right)^{\f{q+1}{2}} \\
&\qquad \cdot \f{2^{3q}3^{\f{5q}{3}}\left(\f{5q}{3}\right)!\left(\f{q+1}{2}\right)!\left(\f{q-1}{2}\right)!}{(4q)!}\left(\f{2}{5}\right)^{-\f{q-1}{2}} \\
&= \f{\prod_{k=1}^{8}(4q+k)\left(\f{5q}{3}\right)!}{2^{6}\cdot 3^{\f{10}{3}}\left(\f{5q+10}{3}\right)!\f{q+1}{2}\cdot\f{q+3}{2}}\cdot \f{2}{5} \\
&= \f{\prod_{k=1}^{8}(4q+k)}{40(q+1)(q+3)}\cdot \f{\left(\f{5q}{3}\right)!}{3^{\f{10}{3}}\left(\f{5q+10}{3}\right)!}.
\end{align}
By \lemref{lem:gamma-at},
\begin{align}
\left(\f{5q+10}{3}\right)! = \Gamma\left(\f{5q}{3}+\f{10}{3}+1\right) \le \left(\f{5q+10}{3}\right)^{\f{10}{3}}\Gamma\left(\f{5q}{3}+1\right)
= \left(\f{5q+10}{3}\right)^{\f{10}{3}}\left(\f{5q}{3}\right)!.
\end{align}
Hence
\begin{align}
\f{\left(\f{5q}{3}\right)!}{3^{\f{10}{3}}\left(\f{5q+10}{3}\right)!} \ge \f{\left(\f{5q}{3}\right)!}{3^{\f{10}{3}}\left(\f{5q+10}{3}\right)^{\f{10}{3}}\left(\f{5q}{3}\right)!}
= \f{1}{(5q+10)^{\f{10}{3}}}.
\end{align}
Therefore,
\begin{align}
\f{R(5,q+2)}{R(5,q)} &\ge \f{\prod_{k=1}^{8}(4q+k)}{40(q+1)(q+3)}\cdot \f{1}{(5q+10)^{\f{10}{3}}} \\
&\ge \f{(4q+1)^{8}}{40(q+1)(q+3)(5q+10)^{\f{10}{3}}}
\end{align}
Let the right-hand side be denoted by $f(q)$. Then
\begin{align}
f(3) &= \f{13^{8}}{15\sqrt[3]{25}\cdot 10^{6}} = 18.598\ldots > 1, \\
f'(q) &= \f{(4q+1)^{7}(16q^{3}+152q^{2}+388q+261)}{7500\sqrt[3]{5}(q+1)^{2} (q+2)^{\f{13}{3}} (q+3)^{2}} > 0.
\end{align}
Therefore,
\begin{align}
\f{R(5,q+2)}{R(5,q)} > 1 \q (q \ge 3).
\end{align}
Hence $R(5,q)$ is increasing in $q$.
\HL

\item[\ref{itm:Rbq}] We have
\begin{align}
\f{R(b+2,q)}{R(b,q)} &= \f{(q(b+1))!}{2^{qb}3^{\f{(b+2)q}{3}}\left(\f{(b+2)q}{3}\right)!\left(\f{q+1}{2}\right)!\left(\f{q-1}{2}\right)!}\left(\f{b+1}{2(b+2)}\right)^{\f{q-1}{2}} \cdot \\
&\qquad\f{2^{q(b-2)}3^{\f{bq}{3}}\left(\f{bq}{3}\right)!\left(\f{q+1}{2}\right)!\left(\f{q-1}{2}\right)!}{(q(b-1))!}\left(\f{b-1}{2b}\right)^{-\f{q-1}{2}} \\
&= \f{\prod_{k=1}^{2q}(q(b-1)+k)}{4^{q}}\cdot\f{\left(\f{qb}{3}\right)!}{3^{\f{2q}{3}}\left(\f{q(b+2)}{3}\right)!}
\left(\f{b(b+1)}{(b-1)(b+2)}\right)^{\f{q-1}{2}}.
\end{align}
Since $2q/3 \ge 2 > 1$, by \lemref{lem:gamma-at} we have
\begin{align}
\left(\f{q(b+2)}{3}\right)! = \Gamma\left(\f{qb}{3}+\f{2q}{3}+1\right) \le \left(\f{q(b+2)}{3}\right)^{\f{2q}{3}}\Gamma\left(\f{qb}{3}+1\right)
= \left(\f{q(b+2)}{3}\right)^{\f{2q}{3}}\left(\f{qb}{3}\right)!.
\end{align}
Hence
\begin{align}
\f{\left(\f{qb}{3}\right)!}{3^{\f{2q}{3}}\left(\f{q(b+2)}{3}\right)!} \ge \f{\left(\f{qb}{3}\right)!}{3^{\f{2q}{3}}\left(\f{q(b+2)}{3}\right)^{\f{2q}{3}}\left(\f{qb}{3}\right)!}
= \f{1}{(q(b+2))^{\f{2q}{3}}}.
\end{align}
Therefore,
\begin{align}
\f{R(b+2,q)}{R(b,q)} &\ge \f{\prod_{k=1}^{2q}(q(b-1)+k)}{4^{q}}\cdot\f{1}{q(b+2)^{\f{2q}{3}}}
\left(\f{b(b+1)}{(b-1)(b+2)}\right)^{\f{q-1}{2}}.
\end{align}
Since
\begin{align}
&\prod_{k=1}^{2q}(q(b-1)+k) \ge (q(b-1)+1)^{2q}
\end{align}
and
\begin{align}
&b(b+1) - (b-1)(b+2) = 2 > 0,
\end{align}
we obtain
\begin{align}
\f{R(b+2,q)}{R(b,q)} \ge \f{(q(b-1)+1)^{2q}}{4^{q}(q(b+2))^{\f{2q}{3}}}\cdot 1^{\f{q-1}{2}}
= \left(\f{(q(b-1)+1)^{3}}{8q(b+2)}\right)^{\f{2q}{3}}.
\end{align}
Both the numerator and denominator of the right-hand side are positive. Define
\begin{align}
g(b, q) \ceq (q(b-1)+1)^{3} - 8q(b+2).
\end{align}
Then
\begin{align}
g(5, 3) &= 2029 > 0, \\
\pdv{g}{b} &= q(3(q(b-1)+1)^{2}-8) \ge q(3(3(5-1)+1)^{2}-8) = 499q > 0, \\
\pdv{g}{q} &= 3(b-1)(q(b-1)+1)^{2} - 8(b+2) \ge 3(b-1)(3(b-1)+1)^{2} - 8(b+2) \\
&= 27b^{3}-63b^{2}+40b-28 = 27(b-5)^{3} + 342\left(b-\f{1985}{684}\right)^{2}+\f{638471}{1368} > 0.
\end{align}
Hence $g(b,q) > 0$, and therefore
\begin{align}
\f{R(b+2,q)}{R(b,q)} > 1 \q (b \ge 5,\ q \ge 3).
\end{align}
Thus $R(b,q)$ is increasing in $b$.

By \ref{itm:R53}, \ref{itm:R5q}, and \ref{itm:Rbq}, we conclude that $R(b, q) > 1$
for all $b \ge 5$ and $q \ge 3$. Therefore,
\begin{align}
\vt{N(b, q, b)} > \vt{D(n)}.
\end{align}
This completes the proof.
\end{proof}

\begin{rem}
For $b = 3$, the above argument does not apply.

Indeed, when $b = 3$, we have
\begin{align}
R(3, 3) &= \f{(3(3-1))!}{\kappa_{2}\cdot 2^{3(3-2)}3^{\f{9}{3}}\left(\f{9}{3}\right)!\left(\f{3+1}{2}\right)!\left(\f{3-1}{2}\right)!}\left(\f{3-1}{6}\right)^{\f{3-1}{2}} = \f{4735}{52488} < 1,
\end{align}
and
\begin{align}
\f{R(3, q+2)}{R(3, q)} &= \f{(2(q+2))!}{2^{q+2}3^{q+2}(q+2)!\left(\f{q+3}{2}\right)!\left(\f{q+1}{2}\right)!}\left(\f{1}{3}\right)^{\f{q+1}{2}}
\cdot \f{2^{q}3^{q}q!\left(\f{q+1}{2}\right)!\left(\f{q-1}{2}\right)!}{(2q)!}\left(\f{1}{3}\right)^{-\f{q-1}{2}} \\
&= \f{4(2q+1)(2q+3)}{27(q+1)(q+3)} = \f{16}{27}\cdot\f{q+\f{1}{2}}{q+1}\cdot\f{q+\f{3}{2}}{q+3} < 1 \q (q \ge 3).
\end{align}
Hence $R(3, q)$ is decreasing in $q$, and therefore
\begin{align}
R(3, q) < 1 \q (q \ge 3).
\end{align}

Thus a different approach is required for the case $b = 3$.
\end{rem}

\subsection{Main theorem}

Combining the results of the previous two subsections, we obtain the following.

\begin{thm}
\label{thm:class3}
\thmclassthree
\end{thm}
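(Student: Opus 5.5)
The theorem is essentially a combination of the two subcases already established, and I would prove it as follows.

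First, take a uniform passport $[b^{q},b^{q},n]$ with $n=bq$ and genus at least~$2$. By \eqref{eq:genusclass3}, $g=(q(b-2)+1)/2$, so both $b$ and $q$ are odd. Genus $g\ge 2$ excludes $b=1$, since then $g=(1-q)/2\le 0$. It also forces $q\ge 3$: for $q=1$ the passport would be $[n,n,n]$, which is not of the form considered here, where $q\ge 2$ is assumed. Hence $b\ge 3$ is odd and $q\ge 3$ is odd. I then split into the cases $b=3$ and $b\ge 5$.

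For $b=3$, \propref{prop:class3beq3} directly gives a dessin with trivial automorphism group. That proposition rests on the explicit permutations of \propref{prop:c3b3}, relabelled so that the cycle types of $x$, $y$ and $(xy)^{-1}$ become $(3^{q})$, $(3^{q})$ and $(n)$.

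For $b\ge 5$, \propref{prop:class3bge5} applies. It first establishes the existence of a dessin with passport $[n,b^{q},b^{q}]$ and trivial automorphism group, via the counting inequality $\vt{N(b,q,b)}>\vt{D(n)}$. To land on the passport in the theorem, I would use the observation of Section~\ref{sec:estimation}: monodromy and automorphism groups are invariant under permuting the roles of $x$, $y$ and $z=(xy)^{-1}$. Given a triple $(x,y,z)$ with $xyz=1$, the triples $(y,z,x)$ and $(z,x,y)$ also have product~$1$; these are cyclic rotations, so no inversion is needed. They have the same generated group, hence the same centralizer in $\Sym(E)$.

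The second statement of the theorem, covering $[b^{q},n,b^{q}]$ and $[n,b^{q},b^{q}]$, is handled the same way. Starting from a triple realizing $[n,b^{q},b^{q}]$ with trivial centralizer, the rotations $(x,y,z)\mapsto(y,z,x)\mapsto(z,x,y)$ realize all three placements of the $n$-cycle, since the two $(b^{q})$ entries are interchangeable. Each rotation keeps the same group $\gen{x,y}=\gen{y,z}=\gen{z,x}$, so each has trivial automorphism group.

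There is no real obstacle here. The only point needing care is checking that the cyclic rotation preserves the product condition, so that the new triple is a genuine dessin, and that the centralizer is unchanged; both follow immediately from $\gen{x,y}=\gen{y,z}=\gen{z,x}$.
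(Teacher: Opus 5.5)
Apart from one case, your argument is the paper's. You use the parity constraint from \eqref{eq:genusclass3}, split into $b=3$ and $b\ge5$ via \propref{prop:class3beq3} and \propref{prop:class3bge5}, and transport between the three placements of the $n$-cycle. Your cyclic-rotation argument $(x,y,z)\mapsto(y,z,x)\mapsto(z,x,y)$ is correct, and is a slightly more explicit version of the paper's appeal to symmetry among the three partitions.

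The gap is the case $q=1$. The theorem places no restriction on $q$, and genus $\ge2$ does not force $q\ge3$. For $q=1$ one has $b=n$ and $g=(n-1)/2$, so every odd $n\ge5$ gives the passport $[n,n,n]$ of genus at least~$2$, which the statement covers.

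Your sentence excluding it (``$q\ge2$ is assumed'') imports a hypothesis from the section's preamble that is not in the theorem. Neither proposition you invoke handles this case. \propref{prop:class3beq3} needs $b=3$, and \propref{prop:class3bge5} assumes $q\ge2$. The bounds behind the latter also do not suffice here: for $n=5$, \lemref{lem:Nbqblower} yields only $\vt{N}\ge3$, while $\vt{D}=\vt{D_{1}}=5$.

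The paper handles this case separately, citing the explicit construction in \cite[Proposition~7.1]{Ohnishi26} of a dessin with passport $[n,n,n]$ and trivial automorphism group. You need that result, or some other argument for $[n,n,n]$, to complete the proof.
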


\begin{proof}
If $q=1$, then $b=n$, and the statement follows from
\cite[Proposition~7.1]{Ohnishi26}.

Assume $q\ge2$.
Since the automorphism group is invariant under permutations of the three partitions,
it suffices to consider the passport $[b^{q}, b^{q}, n]$.

By \eqref{eq:genusclass3}, $b$ is odd and $b\ge3$.
The case $b=3$ is covered by \propref{prop:class3beq3},
and the case $b\ge5$ by \propref{prop:class3bge5}.
\end{proof}

\section{Counterexamples to Previous Conjectures}
\label{sec:counterex}

In \cite{Ohnishi26}, we conjectured the relationship between
the genus of uniform passports and their automorphism groups,
as summarized in \tabref{tab:genusautd}.
However, we found counterexamples to some of these conjectures.
In this section, we present two counterexamples concerning
passports of genus at least~2.

\subsection{No Dessins with Trivial Automorphism Group}

In \tabref{tab:genusautd}, we conjectured that every uniform passport of genus at least~2 admits
a \dde with trivial automorphism group.
However, by direct computation, we found the following counterexample.

\begin{itemize}
\item Passport: $[8^{2}, 2^{8}, 4^{4}]$
\item Genus: 2
\item With $x$ of cycle type $(8^{2})$ fixed, there are $920$ corresponding permutations $y$, and
every corresponding $\AD$ is nontrivial. \\
Up to conjugation by $C_{S_n}(x)$, these form $19$ conjugacy classes.
\end{itemize}

If $t \in C_{S_n}(x)$, then
$(x, tyt^{-1}) = (txt^{-1}, tyt^{-1})$ defines the same dessin as $(x,y)$.
Therefore, it suffices to count conjugacy classes of such permutations $y$
under conjugation by $C_{S_n}(x)$.

\tabref{tab:pass442882} lists representatives of the conjugacy classes of $y$
with $x = (1 \ldots 8)(9 \ldots 16)$ fixed, together with the corresponding orders of automorphism groups
and class sizes.
There are $11$, $6$, $1$, and $1$ conjugacy classes with
$\vt{\AD} = 2$, $4$, $8$, and $16$, respectively.

\begin{table}[htbp]
\centering
\small
$x = (1\ 2\ 3\ 4\ 5\ 6\ 7\ 8)(9\ 10\ 11\ 12\ 13\ 14\ 15\ 16)$ \\
\HL
\begin{tabular}{|c|c|c|}
\hline
Representative & $\vt{\AD}$ & Class size \\
\hline
$(1\ 3)(2\ 5)(4\ 6)(7\ 9)(8\ 10)(11\ 13)(12\ 15)(14\ 16)$ & 2 & 64 \\
\hline
$(1\ 3)(2\ 9)(4\ 6)(5\ 12)(7\ 14)(8\ 15)(10\ 16)(11\ 13)$ & 2 & 64 \\
\hline
$(1\ 3)(2\ 9)(4\ 6)(5\ 14)(7\ 11)(8\ 12)(10\ 16)(13\ 15)$ & 2 & 64 \\
\hline
$(1\ 3)(2\ 9)(4\ 15)(5\ 12)(6\ 13)(7\ 14)(8\ 11)(10\ 16)$ & 2 & 64 \\
\hline
$(1\ 4)(2\ 5)(3\ 9)(6\ 16)(7\ 13)(8\ 10)(11\ 14)(12\ 15)$ & 2 & 64 \\
\hline
$(1\ 4)(2\ 6)(3\ 9)(5\ 14)(7\ 16)(8\ 12)(10\ 13)(11\ 15)$ & 2 & 64 \\
\hline
$(1\ 4)(2\ 7)(3\ 9)(5\ 11)(6\ 14)(8\ 16)(10\ 13)(12\ 15)$ & 2 & 64 \\
\hline
$(1\ 4)(2\ 9)(3\ 7)(5\ 14)(6\ 10)(8\ 12)(11\ 15)(13\ 16)$ & 2 & 64 \\
\hline
$(1\ 4)(2\ 9)(3\ 14)(5\ 12)(6\ 15)(7\ 16)(8\ 11)(10\ 13)$ & 2 & 64 \\
\hline
$(1\ 9)(2\ 10)(3\ 12)(4\ 14)(5\ 15)(6\ 11)(7\ 16)(8\ 13)$ & 2 & 64 \\
\hline
$(1\ 9)(2\ 10)(3\ 13)(4\ 15)(5\ 11)(6\ 16)(7\ 12)(8\ 14)$ & 2 & 64 \\
\hline
$(1\ 3)(2\ 6)(4\ 9)(5\ 7)(8\ 13)(10\ 12)(11\ 15)(14\ 16)$ & 4 & 32 \\
\hline
$(1\ 3)(2\ 9)(4\ 11)(5\ 7)(6\ 13)(8\ 15)(10\ 16)(12\ 14)$ & 4 & 32 \\
\hline
$(1\ 3)(2\ 9)(4\ 15)(5\ 7)(6\ 13)(8\ 11)(10\ 16)(12\ 14)$ & 4 & 32 \\
\hline
$(1\ 5)(2\ 6)(3\ 9)(4\ 14)(7\ 13)(8\ 10)(11\ 15)(12\ 16)$ & 4 & 32 \\
\hline
$(1\ 5)(2\ 9)(3\ 12)(4\ 15)(6\ 13)(7\ 16)(8\ 11)(10\ 14)$ & 4 & 32 \\
\hline
$(1\ 9)(2\ 10)(3\ 11)(4\ 16)(5\ 13)(6\ 14)(7\ 15)(8\ 12)$ & 4 & 32 \\
\hline
$(1\ 5)(2\ 9)(3\ 7)(4\ 15)(6\ 13)(8\ 11)(10\ 14)(12\ 16)$ & 8 & 16 \\
\hline
$(1\ 9)(2\ 12)(3\ 15)(4\ 10)(5\ 13)(6\ 16)(7\ 11)(8\ 14)$ & 16 & 8 \\
\hline
\end{tabular}
\HL
\caption{Representatives of conjugacy classes of $y$ for the passport $[8^{2}, 2^{8}, 4^{4}]$}\label{tab:pass442882}
\end{table}

\figref{fig:dessin-notriv} shows a regular dessin with passport
$[4^{4}, 2^{8}, 8^{2}]$, obtained from
$[8^{2}, 2^{8}, 4^{4}]$ by interchanging the first and third partitions.

\begin{figure}[htbp]
\centering
\includegraphics[width=100mm]{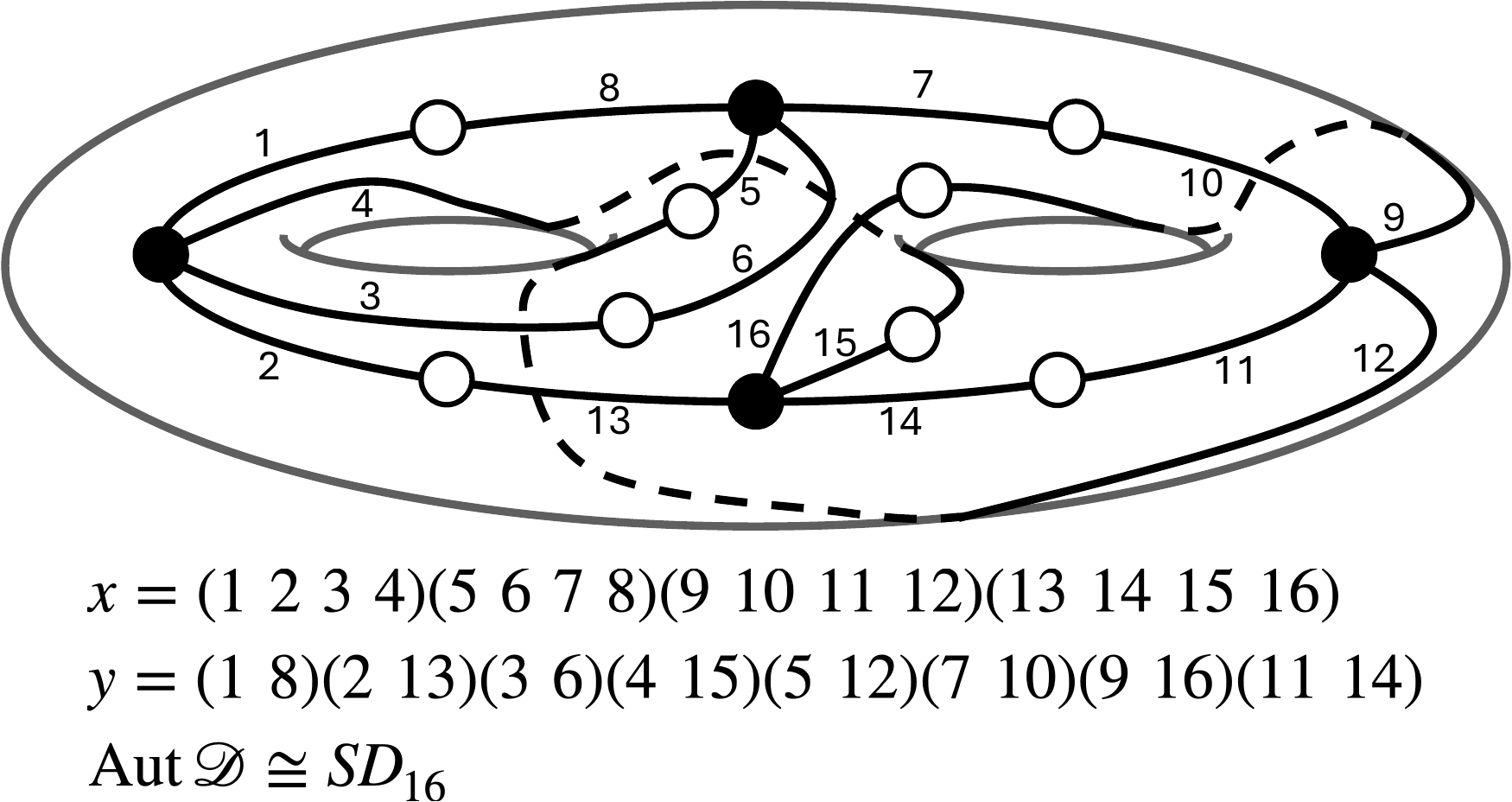} 
\caption{A regular \dde\ with passport $[4^{4}, 2^{8}, 8^{2}]$}
\label{fig:dessin-notriv}
\end{figure}

This counterexample initially suggested the conjecture that every dessin with passport
$[(4t)^{2}, 2^{4t}, (2t)^{4}]$ ($t \ge 2$)
has a nontrivial automorphism group.
However, this conjecture fails already for $t=3$, since the passport
$[12^{2}, 2^{12}, 6^{4}]$
admits a dessin with trivial automorphism group.
Determining the precise behavior of this family remains an open problem.

\subsection{Only Dessins with Trivial Automorphism Group}

In \tabref{tab:genusautd}, we conjectured that every uniform passport of genus at least~2
with at most one $(n)$-cycle admits
a non-regular \dde with nontrivial automorphism group,
that is, a dessin $\msD$ with $1 < \vt{\AD} < n$.
However, a direct computation revealed the following counterexample:

\begin{itemize}
\item Passport: $[15, 5^{3}, 5^{3}]$
\item Genus: 5
\item For a fixed $15$-cycle $x$, there are $4354560$ permutations $y$ ($=N(5,3,5)$),
and every corresponding dessin has trivial automorphism group.
\item The number of conjugacy classes of such $y$ is $290304$,
and each conjugacy class has $15$ elements.
\end{itemize}

\figref{fig:dessin-onlytriv} shows one of the dessins with passport
$[5^{3}, 5^{3}, 15]$, obtained from
$[15, 5^{3}, 5^{3}]$ by interchanging the first and third partitions.

\begin{figure}[htbp]
\centering
\includegraphics[width=100mm]{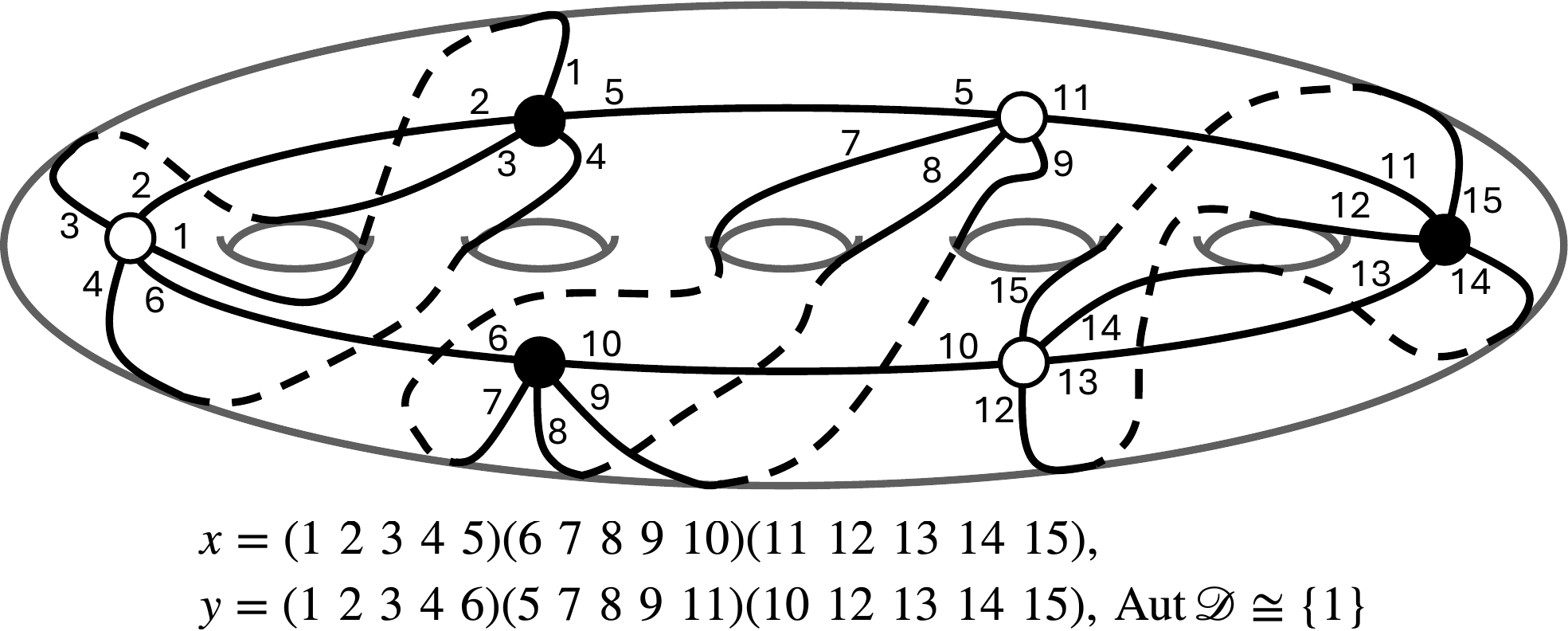} 
\caption{A \dde\ with passport $[5^{3}, 5^{3}, 15]$}
\label{fig:dessin-onlytriv}
\end{figure}

The above passports are special cases of the following theorem.

\begin{thm}
\label{thm:trivautd}
\thmtrivautd
\end{thm}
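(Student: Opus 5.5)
By the invariance of the automorphism group under permutations of the three partitions, it suffices to treat the passport $[\ell_{1}\ell_{2}, \ell_{2}^{\ell_{1}}, \ell_{2}^{\ell_{1}}]$. Here $n=\ell_{1}\ell_{2}$, $x=(1\ 2\ \ldots\ n)$ is fixed, $y$ has cycle type $(\ell_{2}^{\ell_{1}})$, and $z=(xy)^{-1}$ has cycle type $(\ell_{2}^{\ell_{1}})$. The plan is to argue by contradiction. Suppose $\AD$ is nontrivial. By \corref{cor:ADtrivial} and \propref{prop:D}, $y$ commutes with $x^{n/\ell}$ for some prime $\ell\in\{\ell_{1},\ell_{2}\}$. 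Equivalently, $\AD$ contains an element $c=x^{k}$ of prime order $\ell$, acting on the edges as a fixed-point-free permutation of cycle type $(\ell^{n/\ell})$. Since $c$ commutes with $x,y,z$, the group $\gen{c}$ acts freely on the cycles of $y$ and on the cycles of $z$, and we may pass to the quotient dessin $\msD/\gen{c}$. It has $n/\ell$ edges, and its black vertices, white vertices and faces are the $\gen{c}$-orbits of the corresponding cycles.

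The key step is a Riemann--Hurwitz count for the covering $\msD\to\msD/\gen{c}$. This covering is unbranched except possibly over vertices or faces fixed by $c$.

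Case $\ell=\ell_{1}$. The single face ($x$ is an $n$-cycle) is fixed by $c$, so the quotient face has valency $\ell_{2}$. For the white vertices, a cycle $C$ of $y$ of length $\ell_{2}$ either is mapped to itself by $c$ or lies in an orbit of size $\ell_{1}$. If $c(C)=C$, then $c|_{C}$ has order dividing $\gcd(\ell_{1},\ell_{2})=1$, so $c$ fixes $C$ pointwise, contradicting freeness. Hence $c$ permutes the $\ell_{1}$ cycles of $y$ freely, so they form a single orbit, and the quotient has one white vertex of valency $\ell_{2}$. The same holds for the cycles of $z$. The quotient therefore has passport $[\ell_{2},\ell_{2},\ell_{2}]$ with $n/\ell_{1}=\ell_{2}$ edges, and Riemann--Hurwitz gives $2g-2=\ell_{1}(2g'-2)+(\ell_{1}-1)\cdot(\text{number of branch points})$. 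Here $g$ is given by \eqref{eq:genusclass3} with $b=\ell_{2}$, $q=\ell_{1}$, and $g'=(\ell_{2}-1)/2$. Only the face is fixed, so there is one branch point. I will check that the resulting equation has no solution; this bookkeeping is the computation I expect to require the most care. Note that it must fail, since $\ell_{1}$ is odd and a single branch point of order $\ell_{1}$ on an unramified-elsewhere cyclic cover is impossible (the product of the local monodromies must be trivial in $\gen{c}$). This is the clean obstruction I would use: in an abelian quotient, a cyclic cover cannot have exactly one branch point. This obstruction also requires $g$ to be consistent, and we know $q=\ell_{1}$ must be odd, so the argument applies when $\ell_{1}$ is odd.

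Case $\ell=\ell_{2}$. Then $c$ has order $\ell_{2}$ and must act on the set of $\ell_{1}$ cycles of $y$. Since $\ell_{1}<\ell_{2}$, every orbit has size $1$, so $c$ preserves each cycle of $y$. It acts freely on each, hence as a power of that $\ell_{2}$-cycle, and similarly on each cycle of $z$. Thus the black vertices, the white vertices and the face are all branch points, giving $2\ell_{1}+1$ branch points of order $\ell_{2}$. The quotient has $\ell_{1}$ edges and passport $[\ell_{1},1^{\ell_{1}},1^{\ell_{1}}]$. Genus \eqref{eq:lambdag} gives $g'=(\ell_{1}-(2\ell_{1}+1))/2+1=(1-\ell_{1})/2<0$ for $\ell_{1}\ge3$, a contradiction. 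More directly, $x^{\ell_{1}}y$ would be a product of $y$ with $x$-powers forcing $xy$ to have fixed points. For $\ell_{1}=2$ the genus formula forces $q$ odd, so this case is excluded anyway by \eqref{eq:genusclass3}.

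The main obstacle is making the quotient rigorous purely in permutation language. I would do this by letting $\bar{x},\bar{y}$ act on the orbits $E/\gen{c}$ and reading cycle types directly, avoiding geometric Riemann--Hurwitz if needed. For the first case, the obstruction is that the quotient single-face dessin lifts with exactly one fixed point structure. Concretely, I would show $c=x^{k}$ with $\ell_{1}\mid$ order forces $y$ to be determined by its restriction to $\ell_{2}$ consecutive blocks, and then derive that $z$ contains a cycle of length $\ell_{1}\ell_{2}$ rather than $\ell_{2}$.
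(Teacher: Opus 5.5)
Your argument is correct in outline, but it runs through the quotient cover $X\to X/\gen{c}$ rather than the paper's explicit block-system computations. One step needs correcting. For $\ell=\ell_1$ the Riemann--Hurwitz equation does \emph{not} fail: with $2g-2=\ell_1\ell_2-2\ell_1-1$ and $2g'-2=\ell_2-3$ one gets $\ell_1(\ell_2-3)+(\ell_1-1)=\ell_1\ell_2-2\ell_1-1$ identically. Riemann--Hurwitz for a quotient by automorphisms only recounts cells, so it can never produce the contradiction, and the parity of $\ell_1$ plays no role. The contradiction comes solely from the obstruction you then name. In $\pi_1$ of the once-punctured quotient, the loop around the puncture is a product of commutators, so its image in the abelian group $\gen{c}$ is trivial. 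Hence the cover would be unramified at the $x$-cell, contradicting that $c$ fixes it. Also, your preliminary remark that $\gen{c}$ acts freely on the cycles of $y$ and $z$ holds only for $\ell=\ell_1$; for $\ell=\ell_2$ it fixes each of them, as you then show.

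The paper works entirely with permutations. Its case (i), where $y$ commutes with $x^{\ell_1}$, is your case $\ell=\ell_2$. There $y$ induces on the $\ell_1$ orbits of $x^{\ell_1}$ a permutation of order dividing $\ell_2$, which is trivial since $\ell_2\nmid\ell_1!$. Then $(xy)^{\ell_2}=1$ contradicts the fact that $x$ cycles these $\ell_1$ blocks; this is the same content as your $\bar y=\bar z=1\neq\bar x$ (negative quotient genus). Its case (ii), where $y$ commutes with $x^{\ell_2}$, is your case $\ell=\ell_1$ done by hand. Writing $e=(i,t)$ with $t\in\Z/\ell_1\Z$, it shows $y\cdot(i,t)=(\pi(i),s_i+t)$ with $\pi$ an $\ell_2$-cycle. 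Then $y^{\ell_2}=1$ forces $\sum_j s_j\equiv 0$, while $(xy)^{\ell_2}=1$ forces $\sum_j s_j+1\equiv 0 \pmod{\ell_1}$. The quantities $\sum_j s_j$, $\sum_j s_j+1$ and $1$ are exactly the local monodromies of your cyclic cover around a $y$-cell, an $xy$-cell and the $x$-cell. So the paper's computation is a self-contained proof of your ``no single branch point'' lemma in this instance, and it is the precise form of your vague closing plan about $z$ acquiring cycles of length $\ell_1\ell_2$. Your route is more conceptual: it isolates the mechanism, namely that a prime-order automorphism would have to fix exactly one cell, which no cyclic action permits, and it would transfer to other passports with that feature. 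The cost is importing covering theory (deck transformations fix only ramification points; the standard presentation of $\pi_1$ of a punctured surface), which the paper's elementary argument avoids.
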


\begin{proof}
Let $n = \ell_{1}\ell_{2}$.
Since the automorphism group is invariant under permutations of the three partitions,
it suffices to consider the passport
$[\ell_{1}\ell_{2}, \ell_{2}^{\ell_{1}}, \ell_{2}^{\ell_{1}}]$.
Moreover, since the automorphism group is also invariant under relabeling of the edges,
we may assume that $x = \sigman$.

By \eqref{eq:genus}, the genus is
\begin{align}
\f{n-(2\ell_{1}+1)}{2}+1
=
\f{\ell_{1}(\ell_{2}-2)+1}{2}.
\end{align}
Since the genus is an integer, both $\ell_{1}$ and $\ell_{2}$ must be odd.
Hence $\ell_{1} \ge 3$, $\ell_{2} \ge 5$, and the genus is at least~5.

Let $y \in S_n$ be a permutation of cycle type
$(\ell_2^{\ell_1})$ corresponding to the first partition of the passport.
Then $(xy)^{-1}$ also has cycle type $(\ell_2^{\ell_1})$.
Since a permutation and its inverse have the same cycle type,
$xy$ also has cycle type $(\ell_2^{\ell_1})$.

Assume, for contradiction, that the dessin corresponding to $(x,y)$ has nontrivial automorphism group.

As in \propref{prop:D}, let $D$ be the set of permutations in $S_n$
such that the corresponding dessin has nontrivial automorphism group.
Then we have $y \in D$.

By \propref{prop:D},
\begin{align}
D = D_{\f{n}{\ell_{2}}} \cup D_{\f{n}{\ell_{1}}} = D_{\ell_{1}} \cup D_{\ell_{2}}.
\end{align}
Hence $y$ belongs to either $D_{\ell_1}$ or $D_{\ell_2}$.
\HL

\item[(i)] Assume that $y \in D_{\ell_{1}}$.

By the definition \eqref{eq:defDk}, we have
\begin{align}
\label{eq:ycomm1}
yx^{\ell_{1}} = x^{\ell_{1}}y.
\end{align}
For $e \in E = \{ 1, \ldots, n \}$, denote by $O_{1}(e)$ the orbit of $e$ under $x^{\ell_{1}}$.
Then
\begin{align}
O_{1}(e) = \{ e, e+\ell_{1}, e+2\ell_{1}, \ldots, e+(\ell_{2}-1)\ell_{1} \},
\end{align}
where the entries are taken modulo $n$. Therefore, $E$ is partitioned into the following
$\ell_1$ distinct orbits:
\begin{align}
B_{1} &\ceq O_{1}(1) = \{ 1, 1+\ell_{1}, \ldots, 1 + (\ell_{2}-1)\ell_{1} \}, \\
B_{2} &\ceq O_{1}(2) = \{ 2, 2+\ell_{1}, \ldots, 2 + (\ell_{2}-1)\ell_{1} \}, \\
&\cdots \\
B_{\ell_{1}} &\ceq O_{1}(\ell_{1}) = \{ \ell_{1}, 2\ell_{1}, \ldots, \ell_{2}\ell_{1} \}, \\
\end{align}

By \eqref{eq:ycomm1}, for $e \in E$ and $m \in \Z$ we have
\begin{align}
y\cdot((x^{\ell_1})^m\cdot e) = (x^{\ell_1})^m\cdot(y\cdot e).
\end{align}
Hence
\begin{align}
y\cdot O_{1}(e) = O_{1}(y \cdot e).
\end{align}
Therefore, for any $1 \le i \le \ell_{1}$, we have
$y\cdot B_i=B_j$ for some $1\le j\le \ell_1$.
In other words, $y$ permutes the subsets
$B_1,\dots,B_{\ell_1}$.
Thus the partition
\begin{align}
\mcB \ceq \{B_1,\dots,B_{\ell_1}\}
\end{align}
is preserved by $y$, and hence forms a block system for the action of $y$ on $E$.

Therefore, $y$ induces a permutation $\tau$ on the set of blocks $\mcB$, and hence
$\tau \in S_{\ell_{1}}$.
Since $y$ has cycle type $(\ell_{2}^{\ell_{1}})$, we have
\begin{align}
y^{\ell_{2}} = \id_{S_{n}},
\end{align}
hence
\begin{align}
\tau^{\ell_{2}} = \id_{S_{\ell_{1}}}.
\end{align}
Therefore, $\ord(\tau)\mid \ell_2$. Since $\ell_2$ is prime,
\begin{align}
\label{eq:tau2}
\ord(\tau) = 1\ \text{or}\ \ell_{2}.
\end{align}

On the other hand, since $\tau\in S_{\ell_1}$, we have
\begin{align}
\ord(\tau)\mid |S_{\ell_1}|=\ell_1!.
\end{align}
Since $\ell_2$ is a prime greater than $\ell_1$,
we have $\ell_2\nmid\ell_1!$.
Therefore $\ord(\tau)\ne\ell_2$, and hence
\begin{align}
\ord(\tau)=1.
\end{align}

Consequently,
\begin{align}
\label{eq:yblock}
y\cdot B_{i} = B_{i}\q(1 \le i \le \ell_{1}).
\end{align}

Since $xy$ also has cycle type $(\ell_{2}^{\ell_{1}})$, we have
\begin{align}
(xy)^{\ell_{2}} = \id_{S_{n}}.
\end{align}
Hence, on the block system $\mcB$,
\begin{align}
\label{eq:xyblock1}
(xy)^{\ell_{2}}\cdot B_{i} = B_{i}\q(1 \le i \le \ell_{1}).
\end{align}

On the other hand, by \eqref{eq:yblock}, $y$ preserves every block $B_i$.
Hence
\begin{align}
xy\cdot B_i = x\cdot(y\cdot B_i) = x\cdot B_i.
\end{align}
Therefore, the induced action of $xy$ on $\mcB$ coincides with that of $x$, and thus
\begin{align}
(xy)^{\ell_2}\cdot B_i = x^{\ell_2}\cdot B_i.
\end{align}
Let $B_{j} = x^{\ell_{2}}\cdot B_{i}$. Since $x$ cyclically permutes the blocks,
\begin{align}
x \cdot B_{i} =
\begin{dcases}
B_{i+1} & (1 \le i \le \ell_{1}-1) \\
B_{1} & (i = \ell_{1})
\end{dcases},
\end{align}
we obtain
\begin{align}
\label{eq:xyblock2}
j \equiv i + \ell_{2} \pmod{\ell_{1}}.
\end{align}

By \eqref{eq:xyblock1} and \eqref{eq:xyblock2},
\begin{align}
i \equiv i + \ell_{2} \pmod{\ell_{1}},
\end{align}
and hence $\ell_1\mid\ell_2$.
This contradicts the assumption that $\ell_1$ and $\ell_2$ are distinct primes.

Therefore $y \notin D_{\ell_{1}}$.
\HL

\item[(ii)] Assume that $y \in D_{\ell_{2}}$.

By the definition \eqref{eq:defDk}, we have
\begin{align}
\label{eq:ycomm2}
yx^{\ell_{2}} = x^{\ell_{2}}y.
\end{align}
For $e \in E = \{ 1, \ldots, n \}$, denote by $O_{2}(e)$ the orbit of $e$ under $x^{\ell_{2}}$.
Then
\begin{align}
O_{2}(e) = \{ e, e+\ell_{2}, e+2\ell_{2}, \ldots, e+(\ell_{1}-1)\ell_{2} \},
\end{align}
where the entries are taken modulo $n$. Therefore, $E$ is partitioned into the following
$\ell_2$ distinct orbits:
\begin{align}
C_{1} &\ceq O_{2}(1) = \{ 1, 1+\ell_{2}, \ldots, 1 + (\ell_{1}-1)\ell_{2} \}, \\
C_{2} &\ceq O_{2}(2) = \{ 2, 2+\ell_{2}, \ldots, 2 + (\ell_{1}-1)\ell_{2} \}, \\
&\cdots \\
C_{\ell_{2}} &\ceq O_{2}(\ell_{2}) = \{ \ell_{2}, 2\ell_{2}, \ldots, \ell_{1}\ell_{2} \}, \\
\end{align}

By \eqref{eq:ycomm2}, for $e \in E$ and $m \in \Z$ we have
\begin{align}
y\cdot((x^{\ell_2})^m\cdot e) = (x^{\ell_2})^m\cdot(y\cdot e).
\end{align}
Hence
\begin{align}
y\cdot O_{2}(e) = O_{2}(y \cdot e).
\end{align}
Therefore, for any $1 \le i \le \ell_{2}$, $y\cdot C_{i} = C_{j}$ for some $1 \le j \le \ell_{2}$.
Thus the set
\begin{align}
\mcC \ceq \{C_1,\dots, C_{\ell_2}\}
\end{align}
forms a block system preserved by $y$.

Therefore, $y$ induces a permutation $\pi$ on the set of blocks $\mcC$, and hence
$\pi \in S_{\ell_{2}}$.

For $e \in E$, write
\begin{align}
e = (i, t),
\end{align}
where $e \in C_{i}$ and $e = i + t\ell_{2} \pmod{n}$. Then $1 \le i \le \ell_{2}$ and
we may regard $t$ as an element of $\Z/\ell_1\Z$.
Then
\begin{align}
x^{\ell_{2}}\cdot(i, t) &= (i, t+1).
\end{align}
Therefore, there exists a map $f_{i}: \Z/\ell_1\Z \to \Z/\ell_1\Z$ such that
\begin{align}
\label{eq:yfit}
y\cdot(i,t)=(\pi(i),f_i(t)).
\end{align}

Moreover,
\begin{align}
yx^{\ell_{2}}\cdot(i, t) &= y\cdot(i,t+1) = (\pi(i), f_{i}(t+1)), \\
x^{\ell_{2}}y\cdot(i, t) &= x^{\ell_{2}}(\pi(i), f_{i}(t)) = (\pi(i), f_{i}(t)+1).
\end{align}
By \eqref{eq:ycomm2},
\begin{align}
(\pi(i), f_{i}(t+1)) = (\pi(i), f_{i}(t)+1),
\end{align}
hence
\begin{align}
f_{i}(t+1) = f_{i}(t)+1\q(t \in \Z/\ell_{1}\Z).
\end{align}
Let $s_{i} = f_{i}(0)$. Then we obtain
\begin{align}
f_{i}(1) &= f_{i}(0)+1 = s_{i} + 1, \\
f_{i}(2) &= f_{i}(1)+1 = s_{i} + 2, \\
&\cdots \\
f_{i}(\ell_{1}-1) &= f_{i}(\ell_{1}-2)+1 = s_{i} + (\ell_{1}-1).
\end{align}
Hence
\begin{align}
f_{i}(t) = s_{i} + t \q (t \in \Z/\ell_{1}\Z).
\end{align}
Substituting this into \eqref{eq:yfit}, we obtain
\begin{align}
\label{eq:ypi}
y\cdot(i, t) = (\pi(i), s_{i}+t).
\end{align}
Thus $y$ maps each block $C_i$ to $C_{\pi(i)}$ and acts on the second coordinate
by a translation modulo $\ell_1$.

Since $y$ has cycle type $(\ell_2^{\ell_1})$, we have
\begin{align}
\label{eq:yell2}
y^{\ell_2}=\id_{S_{n}}.
\end{align}
Since $\pi$ is the permutation induced by $y$ on the block system $\mcC$, it follows that
\begin{align}
\pi^{\ell_2}=\id_{S_{\ell_2}}.
\end{align}
Hence $\ord(\pi)\mid \ell_2$. Since $\ell_2$ is prime, we have
\begin{align}
\ord(\pi) = 1\ \text{or}\ \ell_{2}.
\end{align}

If $\ord(\pi) = 1$, then $\pi = \id_{S_{\ell_{2}}}$ and hence
\begin{align}
y\cdot(i, t) = (i, s_{i}+t).
\end{align}
Thus $y$ preserves each block $C_i$.
Therefore every cycle of $y$ is contained in a block of size $\ell_{1}$,
and hence has length at most $\ell_1$.
This contradicts the fact that every cycle of $y$ has length $\ell_2$.
Therefore,
\begin{align}
\ord(\pi) = \ell_{2}.
\end{align}

By \eqref{eq:yell2}, we have
\begin{align}
\label{eq:yell2-1}
y^{\ell_{2}}\cdot(i, t) = (i, t).
\end{align}
Since $\ord(\pi)=\ell_2$,
the orbit of every block under $\pi$ consists of all blocks in $\mcC$.
Hence, starting from $(i, t)$ and applying $y$ repeatedly,
the block component visits each block $C_{j}$ exactly once before returning to $C_{i}$.
Therefore each translation parameter $s_j$ is added exactly once, and hence
\begin{align}
\label{eq:yell2-2}
y^{\ell_2}\cdot(i,t) = \left(i,\ \sum_{j=1}^{\ell_2}s_j+t\right).
\end{align}
By \eqref{eq:yell2-1} and \eqref{eq:yell2-2},
\begin{align}
\label{eq:sumsi1}
\sum_{j=1}^{\ell_{2}}s_{j} \equiv 0 \pmod{\ell_{1}}.
\end{align}

The action of $x$ is given by
\begin{align}
x\cdot (i, t) =
\begin{dcases}
(i+1, t) & (i \ne \ell_{2}) \\
(1, t+1) & (i = \ell_{2})
\end{dcases}.
\end{align}
Hence, together with \eqref{eq:ypi}, we obtain
\begin{align}
xy \cdot (i, t) = 
\begin{dcases}
(\pi(i)+1, s_{i} + t) & (\pi(i) \ne \ell_{2}) \\
(1, s_{i} + t + 1) & (\pi(i) = \ell_{2})
\end{dcases}.
\end{align}

Since $xy$ has cycle type $(\ell_{2})^{\ell_{1}}$, we have
\begin{align}
(xy)^{\ell_{2}} = \id_{S_{n}},
\end{align}
and hence
\begin{align}
\label{eq:xyell2-1}
(xy)^{\ell_{2}}\cdot(i, t) = (i, t).
\end{align}
Define a permutation $\pi'$ by
\begin{align}
\pi'(i) =
\begin{dcases}
\pi(i) + 1 & (\pi(i) \ne \ell_{2}) \\
1 & (\pi(i) = \ell_{2})
\end{dcases},
\end{align}
equivalently,
\begin{align}
\pi'(i) =
\begin{dcases}
\pi(i) + 1 & (i \ne \pi^{-1}(\ell_{2})) \\
1 & (i = \pi^{-1}(\ell_{2}))
\end{dcases},
\end{align}
then
\begin{align}
xy\cdot (i, t) =
\begin{dcases}
(\pi'(i), s_{i}+t) & (i \ne \pi^{-1}(\ell_{2})) \\
(\pi'(i), s_{i}+t+1) & (i = \pi^{-1}(\ell_{2})).
\end{dcases}
\end{align}
Clearly, $\pi' \in S_{\ell_2}$.
Since $(xy)^{\ell_2}=\id_{S_n}$ and $\pi'$ is the permutation induced by
$xy$ on the block system $\mcC$, we have
\begin{align}
(\pi')^{\ell_2}=\id_{S_{\ell_2}}.
\end{align}
Hence $\ord(\pi')\mid\ell_2$.
Since $\ell_2$ is prime,
\begin{align}
\ord(\pi')=1\ \text{or}\ \ell_{2}.
\end{align}
As in the case of $\pi$, the equality $\ord(\pi')=1$ leads to a contradiction.
Therefore,
\begin{align}
\ord(\pi')=\ell_2.
\end{align}

Thus, the orbit of $(i,t)$ under $xy$ visits all blocks $C_i$
exactly once before returning to the initial block.
Therefore, the translations by the $s_{j}$ are accumulated once each, and
in addition, $1$ is added once, namely when the orbit passes through
the block $C_{\pi^{-1}(\ell_2)}$. Hence
\begin{align}
\label{eq:xyell2-2}
(xy)^{\ell_{2}}\cdot(i, t) = \left(i,\ \sum_{j=1}^{\ell_{2}}s_{j} + 1 + t\right).
\end{align}
By \eqref{eq:xyell2-1} and \eqref{eq:xyell2-2},
\begin{align}
\label{eq:sumsi2}
\sum_{j=1}^{\ell_{2}}s_{j} + 1 \equiv 0 \pmod{\ell_{1}}.
\end{align}
This contradicts \eqref{eq:sumsi1}.

Therefore $y \notin D_{\ell_{2}}$.
\HL

By (i) and (ii), neither $y\in D_{\ell_{1}}$ nor $y\in D_{\ell_{2}}$ is possible. Since
$D = D_{\ell_{1}} \cup D_{\ell_{2}}$,
we have $y \notin D$.
Therefore the corresponding dessin has trivial automorphism group.
\end{proof}

Accordingly, in addition to $[15,5^{3},5^{3}]$, each of the passports
\begin{align}
[21,7^{3},7^{3}],\ [33,11^{3},11^{3}],\ [35,7^{5},7^{5}],\ [39,13^{3},13^{3}],\ [51,17^{3},17^{3}],\ [55,11^{5},11^{5}],\ldots
\end{align}
has the property that every corresponding dessin has trivial automorphism group.

\section{Passport $[n, n, n]$ with Genus $\ge 2$}
\label{sec:class1}

In this section, we consider passports of the form $[n, n, n]$ with genus at least~2.
A dessin with such a passport has one black vertex, one white vertex, and one face.

By \eqref{eq:genus}, the genus is
\begin{align}
\label{eq:genusclass1}
g=\f{n-3}{2}+1=\f{n-1}{2} \ge 2.
\end{align}
Since $g$ is an integer, $n$ is odd and $n \ge 5$.

By \cite[Corollary~6.2]{Ohnishi26}, this passport always admits a regular dessin.
On the other hand, by \cite[Proposition~7.1]{Ohnishi26}, it always admits a dessin with trivial automorphism group.
We investigate the intermediate case, namely whether the passport admits a nonregular dessin
with nontrivial automorphism group.

\subsection{Nonregular Dessins with Nontrivial Automorphism Group}

\begin{thm}
\label{thm:class1}
\thmclassone
\end{thm}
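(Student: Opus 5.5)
We treat the prime and composite cases separately, using \corref{cor:ADtrivial}, which gives $\AD = \{x^{k} \mid x^{k}y=yx^{k}\}$ whenever $x$ is an $n$-cycle. By \eqref{eq:genusclass1}, $n$ is odd and $n\ge5$. Fix $x=\sigman$ and let $y$ and $xy$ both be $n$-cycles.

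\textbf{Prime $n$.} Here $\AD$ is a subgroup of the cyclic group $\gen{x}\cong C_{n}$. Hence $\vt{\AD}\in\{1,n\}$, and by \propref{prop:regaut} a nontrivial automorphism group forces regularity. So no nonregular dessin with nontrivial automorphism group exists. This gives the ``only if'' direction.

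\textbf{Composite $n$: reduction.} Since $\AD\le\gen{x}$ is cyclic, $\vt{\AD}=r$ holds exactly when $\AD=\gen{x^{m}}$ with $m=n/r$. Equivalently, $y\in D_{m}$ but $y\notin D_{m/\ell}$ for every prime $\ell\mid m$ (using \propref{prop:Dk}\ref{itm:xcom2},\ref{itm:xcom3}). Given $1<r<n$ with $r\mid n$, put $m=n/r$, so $m>1$ and $m$ and $r$ are odd. Take the block system $\mcC=\{C_{1},\dots,C_{m}\}$ of orbits of $x^{m}$, and write $e=(i,t)$ with $t\in\Z/r\Z$, as in part (ii) of the proof of \thmref{thm:trivautd}. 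That argument shows that every $y\in D_{m}$ has the form
\begin{align}
y\cdot(i,t)=(\pi(i),\,t+s_{i}),
\end{align}
and that $xy$ has the analogous form with induced permutation $\pi'$ and one extra $+1$.

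\textbf{Composite $n$: construction.} Choose $\pi$ so that the quotient pair $(\bar x,\bar y)=((1\,2\,\cdots\,m),\pi)$ satisfies two conditions: $\pi$ and $\pi'=\bar x\pi$ are $m$-cycles, and this quotient dessin with passport $[m,m,m]$ has trivial automorphism group. For $m\ge5$ such a $\pi$ exists by \cite[Proposition~7.1]{Ohnishi26}, since then the genus is $\ge2$. Choose the $s_{i}$ with $\sum_{i}s_{i}\equiv1\pmod r$, for example $s_{1}=1$ and all other $s_{i}=0$.

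\textbf{Composite $n$: verification.} Since $\pi$ is an $m$-cycle, $y^{m}$ acts on each block as translation by $\sum_{i} s_{i}=1$, which has order $r$. Hence $y$ is an $n$-cycle. By the same computation as \eqref{eq:xyell2-2}, $(xy)^{m}$ acts on each block as translation by $\sum_{i} s_{i}+1=2$, which is a unit modulo the odd number $r$. Hence $xy$ is also an $n$-cycle, and the passport is $[n,n,n]$. By construction $y$ commutes with $x^{m}$, so $\vt{\AD}\ge r$. Now suppose $y$ also commuted with $x^{m/\ell}$ for some prime $\ell\mid m$. Since $x^{m/\ell}$ preserves $\mcC$, the induced permutations on blocks would commute, so $\bar x^{m/\ell}$ would be a nontrivial automorphism of the quotient dessin. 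This contradicts the choice of $\pi$. Therefore $\vt{\AD}=r<n$, and the dessin is nonregular.

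\textbf{Main obstacle: $m=3$.} Here the quotient $[3,3,3]$ has genus $1$ and its only dessin is regular, so the argument above fails. But for $m=3$ the only prime is $\ell=3$, and we only need $y\notin D_{1}=\gen{x}$, that is, $y$ is not a power of $x$. The permutations in $D_{3}$ with the form above and $\sum_{i} s_{i}=1$ are far more numerous than the powers of $x$, so I would write down an explicit choice. For instance, take $\pi=(1\,2\,3)^{-1}$, so that $\pi'=\id$ fails; this shows $\pi$ must be picked so that $\pi'$ is a $3$-cycle. Then check directly that $y$ is not of the form $x^{a}$ by comparing the images of two edges. Verifying that some choice works uniformly in $r$ is the step I expect to need the most care. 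If needed, I would also choose a non-uniform distribution of the $s_{i}$, such as $s=(1,1,-1)$, to rule out $y\in\gen{x}$.
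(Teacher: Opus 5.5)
Your reduction and your argument for $m=n/r\ge 5$ are correct, and they take a different route from the paper. You pass to the quotient dessin on the blocks of $x^{m}$-orbits and import the triviality of its automorphism group from \cite[Proposition~7.1]{Ohnishi26}. The paper instead writes down one explicit pair, $y=txt^{-1}$ with $t=(s\ 2s\ \ldots\ rs)$ (the paper's $s=n/r$ is your $m$). It then verifies by hand that $xy$ is an $n$-cycle and that $x^{k}y=yx^{k}$ if and only if $s\mid k$.

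\textbf{The gap is the case $m=3$}, that is $r=n/3$, which occurs whenever $3\mid n$ (already for $[9,9,9]$ with $r=3$). You never commit to a choice there, and your suggested fallback can fail. The choice $\pi=(1\,2\,3)$ is forced, since it is the only $3$-cycle for which $\pi'$ is a $3$-cycle. With this $\pi$, $y$ lies in $\gen{x}$ exactly when its shift vector is $(c,c,c+1)$, in which case $y=x^{1+3c}$. For $r=3$ your vector $(1,1,-1)\equiv(1,1,2)$ gives $y=x^{4}$, hence a regular dessin.

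Your default $s=(1,0,0)$ does work. Here $y\cdot 1=5$ and $y\cdot 2=3$, so $y=x^{a}$ would force both $a\equiv 4$ and $a\equiv 1\pmod n$, which is impossible. Therefore $\AD=\gen{x^{3}}$ has order $r$.

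In fact $\pi=\bar x$ with $s=(1,0,\ldots,0)$ works for every odd $m\ge 3$, which removes the dependence on \cite[Proposition~7.1]{Ohnishi26}. The permutation $x^{-1}y$ fixes every block and translates block $i$ by $w_{i}$, where $w=(1,0,\ldots,0,-1)$. Then $x^{k}$ commutes with $y$ if and only if $w$ is invariant under cyclic rotation by $k \bmod m$. This fails for $m\nmid k$, because the entry $1$ occurs at a single position: $1$, $0$, $-1$ are distinct modulo the odd number $r\ge 3$.

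This is essentially the paper's construction. In block coordinates its $y=txt^{-1}$ induces $\bar x$ on blocks, and $x^{-1}y$ translates blocks $s-1$ and $s$ by $+1$ and $-1$. So the paper's proof is uniform and self-contained. Yours is more conceptual, but it needs an external existence theorem plus a separately handled small case.
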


\begin{proof}
Assume that $n$ is prime.
Since $\vt{\AD}$ divides the number of edges $n$, we have $\vt{\AD}=1$ or $n$.
If $\vt{\AD}=1$, then the dessin has trivial automorphism group.
If $\vt{\AD}=n$, then the dessin is regular by \lemref{lem:order-n}.
Hence this passport admits no nonregular dessin with nontrivial automorphism group.

Thus, the existence of a nonregular dessin with nontrivial automorphism group implies that $n$ is composite.

Conversely, assume that $n$ is composite. Let $r$ be a divisor of $n$ satisfying $1<r<n$.
Then we may write $n=rs$. Since $n$ is odd, both $r$ and $s$ are odd integers, and $r, s \ge 3$.

For a dessin $\msD$ with the passport $[n, n, n]$, fix $x = (1\ 2\ \ldots\ n)$ and let
\begin{align}
t &= (s\ 2s\ \ldots\ rs(= n)), \\
y &= txt^{-1}.
\end{align}
Explicitly, $y$ is obtained from $x$ by cyclically permuting the entries
$s,2s,\ldots,(r-1)s,rs(=n)$. Hence $y$ also has cycle type $(n)$.

We show that
\begin{enumerate}
\item \label{itm:c1xy} $(xy)^{-1}$ has cycle type $(n)$, and
\item \label{itm:c1AD} For the dessin $\msD$ corresponding to $x$ and $y$, we have $\vt{\AD}=r$.
\end{enumerate}

In what follows, all indices are taken modulo $n$.
\HL

\item[\ref{itm:c1xy}]
Since $xy$ and $(xy)^{-1}$ have the same cycle type, it suffices to show that $xy$ has cycle type $(n)$.
For each $e \in E = \{ 1, \ldots, n \}$, we can uniquely write $e = is+j$ with $0 \le i \le r-1$ and $1 \le j \le s$.

If $j\ne s$, then $e$ is fixed by both $t$ and $t^{-1}$.
On the other hand,
$t$ sends $is+s$ to $(i+1)s+s$, while $t^{-1}$ sends
$is+s$ to $(i-1)s+s$.

Moreover, $x$ sends $is+j$ to $is+j+1$.

Therefore, for $1\le j\le s-2$, the action of $xy=xtxt^{-1}$ on $e$ is
\begin{align}
is+j \onarrow{t^{-1}} is+j \onarrow{x} is+j+1 \onarrow{t} is+j+1 \onarrow{x} is+j+2,
\end{align}
hence $xy\cdot e = e+2$.

For $j = s-1$, we have
\begin{align}
is+s-1 \onarrow{t^{-1}} is+s-1 \onarrow{x} is+s \onarrow{t} (i+1)s+s \onarrow{x} (i+2)s+1,
\end{align}
hence $xy\cdot e = e+s+2$.

For $j = s$, we have
\begin{align}
is+s \onarrow{t^{-1}} (i-1)s+s \onarrow{x} is+1 \onarrow{t} is+1 \onarrow{x} is+2,
\end{align}
hence $xy\cdot e = e-s+2$.

Thus, since $s$ is odd and $s \ge 3$, starting from $is+1$ and applying $xy$
repeatedly, we obtain
\begin{align}
is+1 &\onarrow{xy} is+3 \rightarrow \cdots \rightarrow is+s-2 \rightarrow is+s \\
&\rightarrow is+2 \rightarrow is+4 \rightarrow \cdots \rightarrow is+s-1 \rightarrow (i+2)s+1.
\end{align}

Each element $is+j$ ($1\le j\le s$) appears exactly once in this sequence,
and the last element is mapped to $(i+2)s+1$.

Since $\gcd(r,2)=1$, repeated addition of $2$ modulo $r$
visits every residue class modulo $r$.
Hence the orbit passes through all elements of $E$
before returning to its starting point.

Therefore, $xy$ has cycle type $(n)$.
\HL

\item[\ref{itm:c1AD}]
By \corref{cor:ADtrivial}\ref{itm:ADtrivial1}, we have
\begin{align}
\label{eq:ADset}
\AD = \{ x^{k} \mid 0 \le k \le n-1,\ x^{k}y = yx^{k} \}.
\end{align}
$x^{k}$ sends $is+j$ to $is+j+k$ modulo $n$.

Assume that $1 \le k \le s-1$ and let $e = rs = n$. The image of $e$ under $x^{k} y=x^{k}txt^{-1}$ is
\begin{align}
rs \onarrow{t^{-1}} (r-1)s \onarrow{x} (r-1)s+1 \onarrow{t} (r-1)s+1 \onarrow{x^{k}} (r-1)s+k+1.
\end{align}
On the other hand, the image of $e$ under $yx^{k} = txt^{-1}x^{k}$ is
\begin{align}
rs \onarrow{x^{k}} rs+k  \onarrow{t^{-1}} rs+k \onarrow{x} rs+k+1 \onarrow{t}
\begin{dcases}
rs+k+1 & (k \ne s-1) \\
2s & (k = s-1)
\end{dcases}.
\end{align}
Since
\begin{align}
(r-1)s+k+1 \ne rs+k+1 \q (k \ne s-1)
\end{align}
and, when $k = s-1$,
\begin{align}
(r-1)s+k+1=(r-1)s+s=rs \ne 2s,
\end{align}
the images of $e$ under $x^{k}y$ and $yx^{k}$ differ. 
Therefore,
\begin{align}
\label{eq:xky1}
x^{k}y \ne yx^{k} \q (1 \le k \le s-1).
\end{align}

We next show that $x^{s}$ commutes with $y$.
To this end, consider the actions of
$x^{s}y=x^{s}txt^{-1}$ and $yx^{s}=txt^{-1}x^{s}$
on $e=is+j$ ($0 \le i \le r-1$, $1 \le j \le s$).

For $1 \le j \le s-2$, we have
\begin{align}
x^{s}y\colon &is+j \onarrow{t^{-1}} is+j \onarrow{x} is+j+1 \onarrow{t} is+j+1 \\
&\onarrow{x^{s}} (i+1)s+j+1, \\
yx^{s}\colon &is+j \onarrow{x^{s}} (i+1)s+j \\
&\onarrow{t^{-1}} (i+1)s+j \onarrow{x} (i+1)s+j+1 \onarrow{t} (i+1)s+j+1.
\end{align}

For $j = s-1$, we have
\begin{align}
x^{s}y\colon &is+s-1 \onarrow{t^{-1}} is+s-1 \onarrow{x} is+s \onarrow{t} (i+1)s+s \\
&\onarrow{x^{s}} (i+2)s+s, \\
yx^{s}\colon &is+s-1 \onarrow{x^{s}} (i+1)s+s-1 \\
&\onarrow{t^{-1}} (i+1)s+s-1 \onarrow{x} (i+1)s+s \onarrow{t} (i+2)s+s.
\end{align}

For $j = s$, we have
\begin{align}
x^{s}y\colon &is+s \onarrow{t^{-1}} (i-1)s+s \onarrow{x} is+1 \onarrow{t} is+1 \\
&\onarrow{x^{s}} (i+1)s+1, \\
yx^{s}\colon &is+s \onarrow{x^{s}} (i+1)s+s \\
&\onarrow{t^{-1}} is+s \onarrow{x} (i+1)s+1 \onarrow{t} (i+1)s+1.
\end{align}

Thus, $x^{s}y$ and $yx^{s}$ agree on every element of $E$. Therefore,
\begin{align}
\label{eq:xky2}
x^{s}y = yx^{s}.
\end{align}

For $0 \le k \le n-1 = rs-1$, write $k = us+v$ ($0 \le u \le r-1$, $0 \le v \le s-1$).
When $v=0$, by \eqref{eq:xky2},
\begin{align}
x^{k}y = (x^{s})^{u}y = y(x^{s})^{u} = yx^{k}.
\end{align}
On the other hand, when $1 \le v \le s-1$, by \eqref{eq:xky1} we have
$x^{v}y \ne yx^{v}$. Since right multiplication by $(x^{s})^{u}$ is bijective, we obtain
\begin{align}
x^{k}y = x^{v}(x^{s})^{u}y = x^{v}y(x^{s})^{u} \ne yx^{v}(x^{s})^{u} = yx^{k}.
\end{align}
Hence we obtain
\begin{align}
x^{k}y = yx^{k} \iff s \mid k.
\end{align}
Then by \eqref{eq:ADset},
\begin{align}
\vt{\AD} = \f{n}{s} = r.
\end{align}

Since $1<\vt{\AD}=r<n$,
the dessin $\msD$ is nonregular and has a nontrivial automorphism group.

Since $r$ was arbitrary, the second assertion of the theorem also follows.
\end{proof}

\figref{fig:dessin-class1} shows a \dde\ with passport $[9,9,9]$
of genus $4$ and $\vt{\AD}=3$.

\begin{figure}[htbp]
\centering
\includegraphics[width=125mm]{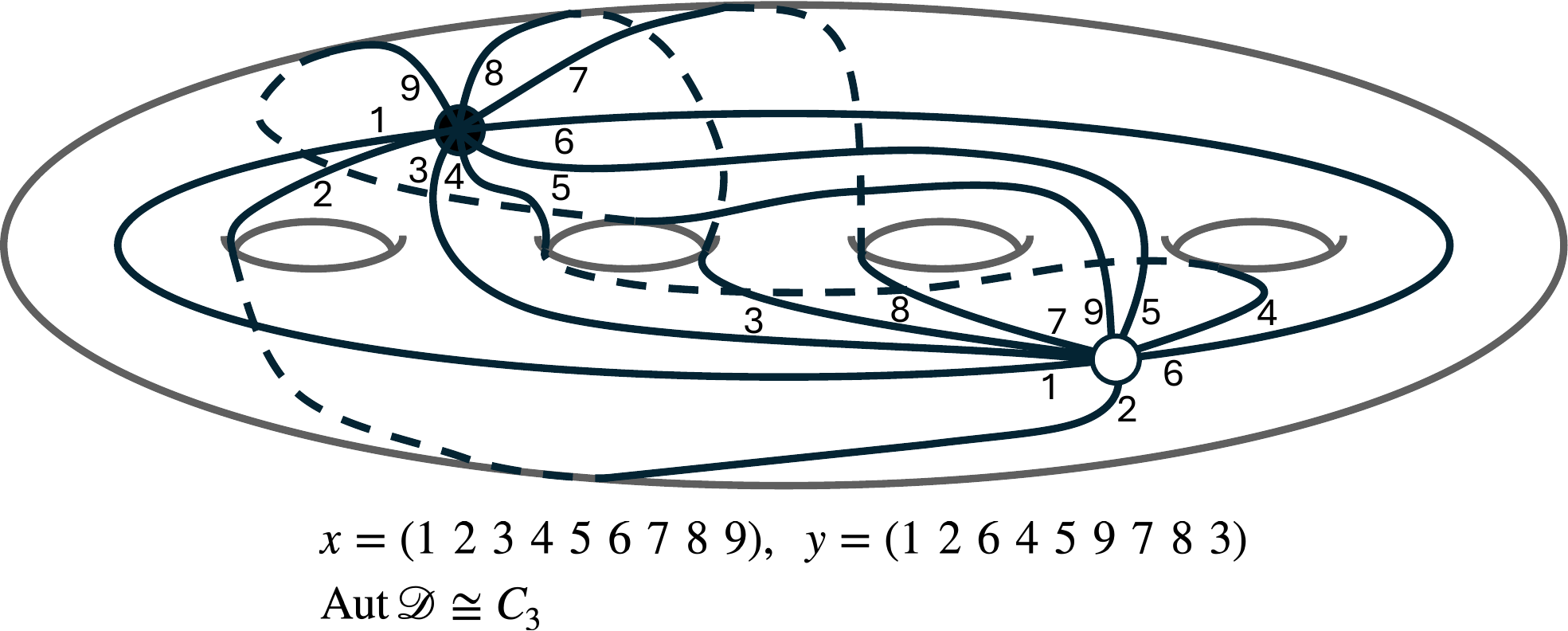}
\caption{A \dde\ with passport $[9,9,9]$ and $\vt{\AD}=3$}
\label{fig:dessin-class1}
\end{figure}

Combining the above theorem with the cases of regular dessins and dessins
with trivial automorphism group, we obtain the following corollary.

\begin{cor}
Let $[n,n,n]$ be a uniform passport of genus at least~2.
Then it admits a \dde\ $\msD$ with $\vt{\AD}=r$
if and only if $r$ is a divisor of $n$.
\end{cor}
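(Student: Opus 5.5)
The corollary follows by combining \thmref{thm:class1} with the two extreme cases $r=1$ and $r=n$. I will prove the two implications separately.

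For the "only if" direction, I will use the basic fact recorded after the definition of the automorphism group: $\AD$ acts freely on the $n$ edges, so every orbit has size $\vt{\AD}$. Hence $\vt{\AD}$ divides $n$ for any dessin with passport $[n,n,n]$.

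For the "if" direction, let $r$ be a divisor of $n$ and split into three cases.

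\begin{enumerate}
\item If $r=1$, \cite[Proposition~7.1]{Ohnishi26} (quoted at the start of this section) gives a dessin with passport $[n,n,n]$ and trivial automorphism group. In fact, this is also recovered by \thmref{thm:class3} in the degenerate case $q=1$.
\item If $r=n$, \cite[Corollary~6.2]{Ohnishi26} gives a regular dessin. By \propref{prop:regaut} and \lemref{lem:order-n}, a regular dessin has $\vt{\AD}=n$. Equivalently, \cite[Theorem~6.1]{Ohnishi26} applies, since the passport $[a^p,b^q,n]$ with $p=q=1$ has $\gcd(p,q)=1$.
\item If $1<r<n$, then $n$ is composite, and the second assertion of \thmref{thm:class1} provides an explicit dessin with $\vt{\AD}=r$. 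Concretely, one takes $x=(1\ 2\ \ldots\ n)$ and $y=txt^{-1}$ with $t=(s\ 2s\ \ldots\ rs)$, where $n=rs$.
\end{enumerate}

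For prime $n$ the middle case is vacuous, since the only divisors are $1$ and $n$. The statement is then consistent with the first part of \thmref{thm:class1}.

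There is no real obstacle here. The only care needed is checking that the cited results from \cite{Ohnishi26} apply under the standing hypothesis of genus at least~$2$, that is, $n$ odd with $n\ge5$. They do, since they are stated for all $[n,n,n]$ passports, respectively in the tree case for all genera.
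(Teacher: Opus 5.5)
Your proposal is correct and matches the paper's argument. For the forward direction you use that $\vt{\AD}$ divides $n$ because the action on edges is free, and for the converse you cite \cite[Proposition~7.1]{Ohnishi26} for $r=1$, \cite[Corollary~6.2]{Ohnishi26} for $r=n$, and \thmref{thm:class1} for $1<r<n$. One small caveat: your aside that \thmref{thm:class3} recovers the case $r=1$ is not independent evidence, since the paper proves the $q=1$ case of that theorem by citing the same \cite[Proposition~7.1]{Ohnishi26}.
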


\section{Alternative Proof of the $[n,b^{q},n]$ Case}
\label{sec:class2}

In \cite[Theorem~7.6]{Ohnishi26}, we proved that every passport $[n, b^{q}, n]$ with $n = bq$ and genus at least~$2$
admits a dessin with trivial automorphism group. In this section, we present an alternative proof using estimates for
the size of the centralizer $C_{S_n}(G)$.

For the passport $[n, n, n]$, that is, the case $q = 1$, \cite[Proposition~7.1]{Ohnishi26} gives an explicit construction
of $x$ and $y$ with $\AD \cong \{ 1 \}$.

The following two subsections treat separately
the cases $b = 2$ (where necessarily $q \ge 4$)
and $b \ge 3$ with $q \ge 2$.

\subsection{The Subcase $b = 2$}

We proved the $b=2$ case in \cite[Proposition~7.3]{Ohnishi26}.
Here we give another proof by explicitly constructing a pair $(x,y)$ for each $q$.

\begin{prop}
\label{prop:c2b2}
For each even integer $q \ge 4$, fix $x = (1\ 2\ \ldots\ 2q) \in S_{2q}$ and define $y \in S_{2q}$ for each $q$ as follows:
\begin{align}
\begin{aligned}
\label{eq:yqdef2}
&y = \A_{1}\A_{2}\cdots \A_{q/2-2}\B, \\
&\A_{i} = (4i{-}3\ 4i{-}1)(4i{-}2\ 4i)\q (1 \le i \le q/2-2), \\
&\B = (2q{-}7\ 2q{-}5)(2q{-}6\ 2q{-}3)(2q{-}4\ 2q{-}1)(2q{-}2\ 2q).
\end{aligned}
\end{align}
Then the following hold:
\begin{enumerate}
\item\label{itm:c2b2-1} $y$ has cycle type $(2^{q})$.
\item\label{itm:c2b2-2} $(xy)^{-1}$ has cycle type $(2q)$.
\item\label{itm:c2b2-3} For the \dde $\msD$ corresponding to $x$ and $y$, we have $\AD \cong \{ 1 \}$.
\end{enumerate}
\end{prop}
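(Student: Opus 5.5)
The plan is to verify the three claims directly from the explicit formula \eqref{eq:yqdef2}, in the same style as \propref{prop:c3b3}: first the cycle types, then triviality of the automorphism group via \corref{cor:ADtrivial}. Throughout, elements of $E=\{1,\ldots,2q\}$ are read modulo $2q$. For these permutations $xy\cdot e = y(e)+1$.

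\ref{itm:c2b2-1} is a covering check. The blocks $\A_{i}$ ($1\le i\le q/2-2$) use exactly the elements $4i-3,\ldots,4i$, that is, $1,\ldots,2q-8$. The factor $\B$ uses $2q-7,\ldots,2q$. So the $4(q/2-2)+4=q$ transpositions are disjoint and cover $E$, and $y$ has cycle type $(2^{q})$.

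\ref{itm:c2b2-2} By symmetry it suffices to show that $xy$ is a $2q$-cycle, by tracing how $xy$ passes through each block. For a block $\A_{i}$, put $a=4i-3$. The rule $xy\cdot e=y(e)+1$ gives the chain
\begin{align}
a\mapsto a+3\mapsto a+2\mapsto a+1\mapsto a+4 .
\end{align}
So the orbit enters the block at $a$, visits all four of its elements, and exits to the first element of the next block. For the final block, put $c=2q-7$. A direct computation gives
\begin{align}
c\mapsto c+3\mapsto c+7\mapsto c+6\mapsto c+4\mapsto c+2\mapsto c+1\mapsto c+5\mapsto 2q+1\equiv 1 .
\end{align}
This visits all eight elements of $\B$ and returns to $1$. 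Concatenating the chains, the orbit of $1$ under $xy$ is all of $E$. Hence $xy$, and therefore $(xy)^{-1}$, has cycle type $(2q)$.

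\ref{itm:c2b2-3} By \corref{cor:ADtrivial}\ref{itm:ADtrivial2}, I must show that for each $1\le k\le 2q-1$ some $e$ satisfies $x^{k}y\cdot e\ne yx^{k}\cdot e$, that is, $y(e)+k\ne y(e+k)$. I will use two test edges.

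With $e=2q$, where $y(2q)=2q-2$, equality forces $y(k)=k-2$. So $k$ must be the larger entry of a transposition of $y$ with difference $2$. Only $k\in\{4i-1,\,4i\}$ or $k=2q-5$ are possible; the transposition $(2q{-}2\ 2q)$ would give $k=2q$, which is excluded.

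With $e=2q-1$, where $y(2q-1)=2q-4$, equality forces $y(k-1)=k-4$. So $k-1$ must be the larger entry of a transposition with difference $3$, namely $2q-3$ or $2q-1$. This gives $k\in\{2q-2,\,2q\}$, hence $k=2q-2$.

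Since $2q-2$ is not of the form $4i-1$, $4i$ with $i\le q/2-2$ (those are at most $2q-8$), and $2q-2\ne 2q-5$, no $k$ survives both tests. Hence $\AD$ is trivial.

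The main obstacle is the bookkeeping in \ref{itm:c2b2-2}. The last block $\B$ breaks the uniform pattern, and the orbit must close up exactly after passing through it. I would double-check this case by hand for $q=4$, where there are no $\A_{i}$ and $y=\B$, and for $q=6$.
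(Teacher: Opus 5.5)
Your proposal is correct and follows essentially the same route as the paper: the covering check for the cycle type of $y$, tracing the single $2q$-cycle of $xy$ block by block, and two test edges via \corref{cor:ADtrivial}. The only difference is that you use the test edge $e=2q$ where the paper uses $e=2q-3$. Your choice leaves the candidate set $\{4i-1,4i\}\cup\{2q-5\}$ instead of just $k=2$, but this set is still disjoint from the value $k=2q-2$ forced by $e=2q-1$, so the argument goes through.
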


\begin{rem}
For $q = 4, 6, 8,$ and $10$, the permutations $y$ and $(xy)^{-1}$ are as follows:
\begin{itemize}
\item $q=4$:
\begin{align}
y&=(1\ 3)(2\ 5)(4\ 7)(6\ 8), \\
(xy)^{-1}&=(1\ 6\ 2\ 3\ 5\ 7\ 8\ 4)
\end{align}
\item $q=6$:
\begin{align}
y&=(1\ 3)(2\ 4)(5\ 7)(6\ 9)(8\ 11)(10\ 12), \\
(xy)^{-1}&=(1\ 10\ 6\ 7\ 9\ 11\ 12\ 8\ 5\ 2\ 3\ 4)
\end{align}
\item $q=8$:
\begin{align}
y&=(1\ 3)(2\ 4)(5\ 7)(6\ 8)(9\ 11)(10\ 13)(12\ 15)(14\ 16), \\
(xy)^{-1}&=(1\ 14\ 10\ 11\ 13\ 15\ 16\ 12\ 9\ 6\ 7\ 8\ 5\ 2\ 3\ 4)
\end{align}
\item $q=10$:
\begin{align}
y &= (1\ 3)(2\ 4)(5\ 7)(6\ 8)(9\ 11)(10\ 12)(13\ 15)(14\ 17)(16\ 19)(18\ 20), \\
(xy)^{-1}&= (1\ 18\ 14\ 15\ 17\ 19\ 20\ 16\ 13\ 10\ 11\ 12\ 9\ 6\ 7\ 8\ 5\ 2\ 3\ 4) \\
\end{align}
\end{itemize}
These examples illustrate the structure of the construction.
\end{rem}

\begin{proof}
\item[\ref{itm:c2b2-1}]
In \eqref{eq:yqdef2}, the permutation $y$ is written as a product of $q$ transpositions.
To show that $y$ has cycle type $(2^{q})$, it suffices to verify that each element of $\{1,\ldots,2q\}$ appears.

For each $s \in \{1,\ldots,2q\}$, we distinguish two cases:
\begin{itemize}
\item If $1 \le s \le 2q-8$, let $i = \fl{(s+3)/4}$. Then $1 \le i \le q/2-2$, and $s$ appears in $\A_i$ as one of
$4i-3$, $4i-2$, $4i-1$, or $4i$, according to $s \equiv 1,2,3,0 \pmod{4}$.
\item If $2q-7 \le s \le 2q$, then $s$ appears in $\B$.
\end{itemize}

Thus every element of $\{1,\ldots,2q\}$ appears, and hence $y$ has cycle type $(2^{q})$.
\HL

\item[\ref{itm:c2b2-2}]
Since $xy$ and $(xy)^{-1}$ have the same cycle type, it suffices to show that $xy$ has cycle type $(2^{q})$.
We now prove this.

Table~\ref{tab:xycycles2} displays a $2q$-cycle of $xy$ arising from \eqref{eq:yqdef2}.

\begin{table}[htbp]
\centering
\small
\begin{tabular}{|c|c|c|c|}
\hline
$\A / \B$ & $s$ & $y\cdot s$ & $xy\cdot s$ \\
\hline
\multirow{4}{*}{$\A_{i}$} & $4i-3$ & $4i-1$ & $4i$ \\
& $4i$ & $4i-2$ & $4i-1$ \\
& $4i-1$ & $4i-3$ & $4i-2$ \\
& $4i-2$ & $4i$ & $4i+1=4(i+1)-3$ \\
\hline
\multirow{8}{*}{$\B$} & $2q-7$ & $2q-5$ & $2q-4$ \\
& $2q-4$ & $2q-1$ & $2q$ \\
& $2q$ & $2q-2$ & $2q-1$ \\
& $2q-1$ & $2q-4$ & $2q-3$ \\
& $2q-3$ & $2q-6$ & $2q-5$ \\
& $2q-5$ & $2q-7$ & $2q-6$ \\
& $2q-6$ & $2q-3$ & $2q-2$ \\
& $2q-2$ & $2q$ & $1$ \\
\hline
\end{tabular}
\vspace{1\baselineskip}
\caption{A $2q$-cycle of $xy$ ($s = 1, \ldots, 2q$)}\label{tab:xycycles2}
\end{table}

In each $\A_{i}$, the four elements are permuted as follows:
\begin{align}
4i{-}3 \to 4i \to 4i{-}1 \to 4i{-}2 \to 4(i{+}1){-}3 \to \cdots,
\end{align}
where the last element $4(i+1)-3$ lies in the next block. 

If $i < q/2-2$, then the next block is $\A_{i+1}$, and the same pattern continues.

If $i = q/2-2$, then the next block is $\B$, and since $4(i+1)-3 = 2q-7$, the permutation continues as
\begin{align}
2q{-}7 \to 2q{-}4 \to 2q \to 2q{-}1 \to 2q{-}3 \to 2q{-}5 \to 2q{-}6 \to 2q{-}2 \to 1.
\end{align}

Thus all $2q$ elements are permuted in a single cycle by $xy$. Hence $xy$ has cycle type $(2q)$.
\HL

\item[\ref{itm:c2b2-3}]
By \corref{cor:ADtrivial}\ref{itm:ADtrivial2}, it suffices to show that
\begin{align}
\text{for every}\ 1 \le k \le 2q-1,\ x^{k}y \ne yx^{k}.
\end{align}
Equivalently,
\begin{align}
&\text{for every}\ 1 \le k \le 2q-1,\\
\label{eq:2qxkytrivial}
&\q\text{there exists}\ e \in E = \{ 1, \ldots, 2q \}\ \text{such that}\ x^{k}y\cdot e \ne yx^{k}\cdot e.
\end{align}

In what follows, the elements of $E$ are taken modulo $2q$.
\HL

\item[(i)] Let $e = 2q-3$. Then
\begin{align}
x^{k}y\cdot e &= x^{k} \cdot (2q-6) = k-6, \\
yx^{k}\cdot e &= y\cdot(k-3).
\end{align}
The equality $x^{k}y\cdot e = yx^{k}\cdot e$ holds only if
$k-3$ is mapped to $k-6$ by $y$. By \eqref{eq:yqdef2}, this occurs only when
\begin{align}
k -3 = 2q-3\ \text{or}\ 2q-1,
\end{align}
that is,
\begin{align}
k = 2q\ \text{or}\ 2.
\end{align}
Since $1 \le k \le 2q-1$, the case $k = 2q$ is excluded. Hence
\begin{align}
\label{eq:2qxky1}
x^{k}y\cdot e = yx^{k}\cdot e \iff k =2.
\end{align}

\item[(ii)] Let $e = 2q-1$. Then
\begin{align}
x^{k}y\cdot e &= x^{k} \cdot (2q-4) = k-4, \\
yx^{k}\cdot e &= y\cdot(k-1).
\end{align}
The equality $x^{k}y\cdot e = yx^{k}\cdot e$ holds only if
$k-1$ is mapped to $k-4$ by $y$. By \eqref{eq:yqdef2}, this occurs only when
\begin{align}
k - 1 = 2q-3\ \text{or}\ 2q-1,
\end{align}
that is,
\begin{align}
k = 2q-2\ \text{or}\ 2q.
\end{align}
Since $1 \le k \le 2q-1$, the case $k = 2q$ is excluded. Hence
\begin{align}
\label{eq:2qxky2}
x^{k}y\cdot e = yx^{k}\cdot e \iff k =2q-2.
\end{align}

\eqref{eq:2qxky1} and \eqref{eq:2qxky2} cannot simultaneously hold for any $k$.
Therefore \eqref{eq:2qxkytrivial} holds.
Consequently, for the corresponding \dde\ $\msD$, we have
$\AD \cong \{ 1 \}$.
\end{proof}

\subsection{The Subcase $b \ge 3$}

The following proposition was proved in \cite[Proposition~7.5]{Ohnishi26}.
Here we give an alternative proof.

\begin{prop}
\label{prop:c2bge3}
Every passport $[n, b^{q}, n]$ of genus~$\ge 2$ with $b \ge 3$ and $q \ge 2$ admits a dessin with a trivial automorphism group.
\end{prop}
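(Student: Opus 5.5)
The plan is to run the counting argument of Section~\ref{sec:estimation}, as in \propref{prop:class3bge5}, now with $\lambda_\infty=n$. Fix $x=\sigman$. It suffices to prove $\vt{N(b,q,n)}>\vt{D(n)}$. By \eqref{eq:genus} the genus is $g=(n-q)/2\ge2$, so $n\equiv q\pmod 2$ and $n-q\ge4$; hence \thmref{thm:MN} applies. Combining it with \propref{prop:tbq} gives
\begin{align}
\vt{N(b,q,n)}\ge \f{2}{n+2}\cdot\f{n!}{b^{q}q!}.
\end{align}
For the other side, \lemref{lem:Dupper} gives $\vt{D(n)}\le\kappa_{1}2^{n/2}(n/2)!$ in general, and the sharper bound $\kappa_{2}3^{n/3}(n/3)!$ when $n$ is odd. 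So I would set
\begin{align}
R(b,q)\ceq\f{2\cdot n!}{(n+2)\,b^{q}q!\,\kappa_{1}2^{n/2}(n/2)!}\q(n=bq),
\end{align}
and aim to prove $R(b,q)>1$ on the relevant range.

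To establish $R(b,q)>1$, I would follow the template of \propref{prop:class3bge5}: find a base point, then show monotonicity in $b$ and in $q$.
\begin{itemize}
\item Since $n\equiv q\pmod2$, the parameter $q$ steps by $2$ when $b$ is odd, while $q$ is unrestricted when $b$ is even. So I would compare $R(b,q+2)/R(b,q)$ and $R(b+1,q)/R(b,q)$ (or $R(b+2,q)/R(b,q)$, to keep parity).
\item In each ratio, the quotient $n!/(n/2)!$ grows like $(n')^{n'-n}$ times a product of linear factors. Using \lemref{lem:gamma-at} to bound $\Gamma((n+m)/2+1)\le((n+m)/2)^{m/2}\Gamma(n/2+1)$, each ratio reduces to an explicit rational or algebraic expression that is visibly $>1$ once $b,q$ exceed small thresholds.
\item The factors $b^{q}q!$ are harmless: $n!$ dominates them superexponentially, since $n!/(b^{q}q!)$ is roughly $(n/e)^{n}$ divided by $(n/e)^{q}$, whereas $2^{n/2}(n/2)!$ is only about $(n/e)^{n/2}$.
\end{itemize}

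The main obstacle is the small cases, where these crude bounds genuinely fail. For example, at $(b,q)=(3,2)$, $n=6$, one gets $T=40$, the bound $N\ge10$, and $\vt{D}$ of order $60$. So I would first locate the exact threshold beyond which $R>1$, separately along each edge $b=3$ and $q=2$ together with the growth directions, and then dispose of the finitely many remaining passports in one of three ways:
\begin{itemize}
\item compute $\vt{N}$ exactly from \thmref{thm:goupil}, and use the exact structure $D=\bigcup_{\ell\mid n}D_{n/\ell}$ with inclusion--exclusion rather than $\kappa_1$;
\item failing that, exhibit explicit $y$ and verify triviality via \corref{cor:ADtrivial}, checking $x^{k}y\cdot e\ne yx^{k}\cdot e$ at two well-chosen edges $e$, exactly as in \propref{prop:c2b2};
\item alternatively, cite \cite[Proposition~7.5]{Ohnishi26} for these finitely many cases.
\end{itemize}
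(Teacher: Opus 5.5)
This is essentially the paper's proof: the paper likewise combines \thmref{thm:MN} and \propref{prop:tbq} with the $\kappa_{1}$ bound of \lemref{lem:Dupper} (weakening $\frac{2}{n+2}$ to $\frac{3}{2n}$) and proves monotonicity of $R$ under unit steps in $b$ and in $q$ via \lemref{lem:gamma-at}. It is then left with $(b,q)=(3,2),(3,3),(3,4),(4,2)$. For $(3,3)$, exact counts suffice ($\vt{N}=464>162=\vt{D}$). The other three need your explicit-$y$ option, because there even the exact values give $\vt{N}<\vt{D}$ ($12<60$, $38720<47952$, $276<384$), so your first fallback cannot succeed.

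One slip: because $n-q=q(b-1)$, it is for even $b$ that $q$ must be even, whereas every $q\ge 2$ occurs for odd $b$. This does not affect the monotonicity, which, as in the paper, can be proved over all integer pairs since $R$ is defined through $\Gamma$. It does mean that odd-$q$ pairs such as $(3,3)$ and $(3,5)$ must be among your base cases.
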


\begin{proof}
By \eqref{eq:genus}, the genus is 
\begin{align}
\f{n-(1+q+1)}{2}+1 = \f{bq-(q+2)}{2}+1 = \f{q(b-1)}{2}.
\end{align}
Since $b \ge 3$ and $q \ge 2$, we have $q(b-1)/2 \ge 2$.
Moreover, since the genus is an integer, if $q$ is odd then $b$ is odd, and hence $n=bq$ is also odd.
\HL

By \thmref{thm:MN}, we have
\begin{align}
\label{eq:Nlower-e}
\f{\vt{N}}{\vt{T}} \ge \f{2}{n+2} = \f{2}{n(1+\f{2}{n})} \ge \f{2}{n(1+\f{2}{6})} = \f{3}{2n}.
\end{align}

On the other hand, by \propref{prop:tbq} and \lemref{lem:Dupper}, we have
\begin{align}
\f{\vt{D}}{\vt{T}} \le \kappa_{1}\cdot 2^{\f{n}{2}}\left(\f{n}{2}\right)!\cdot\f{b^{q}q!}{n!}\q
\left(\left(\f{n}{2}\right)! \ceq \Gamma\left(\f{n}{2}+1\right)\right),
\end{align}
where $\kappa_{1} = 2623/1894$.

Combining these inequalities, we obtain
\begin{align}
\f{\vt{N}}{\vt{D}} \ge \f{3}{2n}\cdot\f{n!}{\kappa_{1}\cdot 2^{\f{n}{2}}b^{q}\left(\f{n}{2}\right)!q!}
= \f{3}{2bq}\cdot\f{(bq)!}{\kappa_{1}\cdot 2^{\f{bq}{2}}b^{q}\left(\f{bq}{2}\right)!q!}.
\end{align}
Let the right-hand side be
\begin{align}
\label{eq:Rbq}
R(b, q) \ceq \f{3}{2bq}\cdot\f{(bq)!}{\kappa_{1}\cdot 2^{\f{bq}{2}}b^{q}\left(\f{bq}{2}\right)!q!}.
\end{align}

The ratio of successive values of $R(b, q)$ with respect to $b$ is given by
\begin{align}
\f{R(b+1, q)}{R(b, q)} &= \f{3}{2(b+1)q}\cdot\f{((b+1)q)!}{\kappa_{1}\cdot 2^{\f{(b+1)q}{2}}(b+1)^{q}\left(\f{(b+1)q}{2}\right)!q!}\cdot
\f{2bq}{3}\cdot\f{\kappa_{1}\cdot 2^{\f{bq}{2}}b^{q}\left(\f{bq}{2}\right)!q!}{(bq)!} \\
&= 2^{-\f{q}{2}}\left(\f{b}{b+1}\right)^{q+1}\f{((b+1)q)!}{(bq)!}\cdot
\f{\left(\f{bq}{2}\right)!}{\left(\f{(b+1)q}{2}\right)!}.
\end{align}
Here,
\begin{align}
\f{((b+1)q)!}{(bq)!} = (bq+1)(bq+2)\cdots(bq+q) > (bq)^{q}. 
\end{align}
Moreover, since $q/2 \ge 1$, by \lemref{lem:gamma-at},
\begin{align}
\f{\left(\f{bq}{2}\right)!}{\left(\f{(b+1)q}{2}\right)!} &= \f{\Gamma\left(\f{bq}{2}+1\right)}{\Gamma\left(\f{bq}{2}+\f{q}{2}+1\right)} \\
&\ge \f{\Gamma\left(\f{bq}{2}+1\right)}{\left(\f{(b+1)q}{2}\right)^{\f{q}{2}}\Gamma\left(\f{bq}{2}+1\right)}
= \left(\f{2}{(b+1)q}\right)^{\f{q}{2}}.
\end{align}
Therefore,
\begin{align}
\f{R(b+1, q)}{R(b, q)} &\ge 2^{-\f{q}{2}}\left(\f{b}{b+1}\right)^{q+1}(bq)^{q}\left(\f{2}{(b+1)q}\right)^{\f{q}{2}} \\
&= \frac{b^{2q+1}q^{\f{q}{2}}}{(b+1)^{\f{3q}{2}+1}}
= \f{b}{b+1}\left(\f{b^{2}q^{\f{1}{2}}}{(b+1)^{\f{3}{2}}}\right)^{q}.
\end{align}
Since $b \ge 3$ and $q \ge 2$,
\begin{align}
\f{b^{2}q^{\f{1}{2}}}{(b+1)^{\f{3}{2}}} = \f{b^{\f{1}{2}}q^{\f{1}{2}}}{(1+\f{1}{b})^{\f{3}{2}}}
\ge \f{3^{\f{1}{2}}2^{\f{1}{2}}}{(1+\f{1}{3})^{\f{3}{2}}} = \f{9\sqrt{2}}{8} > 1,
\end{align}
therefore we obtain
\begin{align}
\f{R(b+1, q)}{R(b, q)} &\ge \f{b}{b+1}\left(\f{9\sqrt{2}}{8}\right)^{q} = \f{1}{1+\f{1}{b}}\left(\f{9\sqrt{2}}{8}\right)^{q} \\
&\ge \f{1}{1+\f{1}{3}}\left(\f{9\sqrt{2}}{8}\right)^{2} = \f{243}{128} > 1.
\end{align}
Hence $R(b, q)$ is strictly increasing in $b$ for each $q$.

The ratio of successive values of $R(b, q)$ with respect to $q$ is given by
\begin{align}
\f{R(b, q+1)}{R(b, q)} &= \f{3}{2b(q+1)}\cdot\f{(b(q+1))!}{\kappa_{1}\cdot 2^{\f{b(q+1)}{2}}b^{q+1}\left(\f{b(q+1)}{2}\right)!(q+1)!}
\f{2bq}{3}\cdot\f{\kappa_{1}\cdot 2^{\f{bq}{2}}b^{q}\left(\f{bq}{2}\right)!q!}{(bq)!} \\
&= 2^{-\f{b}{2}}\f{q}{b(q+1)^{2}}\cdot\f{\left(\f{bq}{2}\right)!}{\left(\f{b(q+1)}{2}\right)!}\cdot\f{(b(q+1))!}{(bq)!}.
\end{align}
Here,
\begin{align}
\f{(b(q+1))!}{(bq)!} = (bq+1)(bq+2)\cdots(bq+b) \ge (bq)^{b}. 
\end{align}
Moreover, since $b/2 \ge 1$, by \lemref{lem:gamma-at},
\begin{align}
\f{\left(\f{bq}{2}\right)!}{\left(\f{b(q+1)}{2}\right)!} &= \f{\Gamma\left(\f{bq}{2}+1\right)}{\Gamma\left(\f{bq}{2}+\f{b}{2}+1\right)} \\
&\ge \f{\Gamma\left(\f{bq}{2}+1\right)}{\left(\f{b(q+1)}{2}\right)^{\f{b}{2}}\Gamma\left(\f{bq}{2}+1\right)}
= \left(\f{2}{b(q+1)}\right)^{\f{b}{2}}.
\end{align}
Therefore,
\begin{align}
\f{R(b, q+1)}{R(b, q)} &\ge 2^{-\f{b}{2}}\f{q}{b(q+1)^{2}}(bq)^{b}\cdot\left(\f{2}{b(q+1)}\right)^{\f{b}{2}} \\
&= \left(\f{q}{q+1}\right)^{3}\cdot\left(\f{b^{\f{1}{2}}q}{(q+1)^{\f{1}{2}}}\right)^{b-2}.
\end{align}

When $b = 3$ and $q \ge 3$,
\begin{align}
\f{R(b, q+1)}{R(b, q)} &\ge \left(\f{q}{q+1}\right)^{3}\cdot\f{\sqrt{3}q}{(q+1)^{\f{1}{2}}}
= \left(\f{1}{1+\f{1}{q}}\right)^{3}\cdot\f{\sqrt{3}q^{\f{1}{2}}}{\left(1+\f{1}{q}\right)^{\f{1}{2}}} \\
&\ge \left(\f{1}{1+\f{1}{3}}\right)^{3}\cdot\f{\sqrt{3}\cdot 3^{\f{1}{2}}}{\left(1+\f{1}{3}\right)^{\f{1}{2}}} = \f{81\sqrt{3}}{128} > 1.
\end{align}

When $b \ge 4$, since $b-2 > 0$ and
\begin{align}
\f{b^{\f{1}{2}}q}{(q+1)^{\f{1}{2}}} &= \left(\f{bq}{1+\f{1}{q}}\right)^{\f{1}{2}} \ge \left(\f{4\cdot 2}{1+\f{1}{2}}\right)^{\f{1}{2}}
= \f{4}{\sqrt{3}} > 1,
\end{align}
we obtain
\begin{align}
\f{R(b, q+1)}{R(b, q)} &\ge \left(\f{q}{q+1}\right)^{3}\cdot\left(\f{4}{\sqrt{3}}\right)^{4-2}
= \left(\f{1}{1+\f{1}{q}}\right)^{3}\cdot\f{16}{3} \\
&\ge \left(\f{1}{1+\f{1}{2}}\right)^{3}\cdot\f{16}{3} = \f{128}{81} > 1.
\end{align}

Hence $R(b,q)$ is strictly increasing in $q$
for all $b \ge 4$, and also for $b=3$ whenever $q \ge 3$.

By direct computation using \eqref{eq:Rbq}, we obtain
\begin{align}
&R(3,2),\ R(3,3),\ R(3,4),\ R(4,2) \le 1, \\
&R(3,5),\ R(4,4),\ R(5,2) > 1.
\end{align}

Combining these values with the monotonicity of $R(b,q)$ established above,
we obtain $R(b,q)>1$, and hence $\vt{N}>\vt{D}$,
for all pairs $(b,q)$ except
\[
(b,q)=(3,2),\ (3,3),\ (3,4),\ (4,2).
\]

For these remaining pairs, 
the values of $\vt{N}$ and $\vt{D}$ are listed in
\tabref{tab:exceptionbq}.

In the case $(b,q)=(3,3)$, we still have $\vt{N} > \vt{D}$,
and therefore a dessin with trivial automorphism group exists.

For the remaining three pairs, we have $\vt{N}<\vt{D}$.
For each of them, \tabref{tab:exceptionbq} contains an explicit permutation
$y$ (with $x=(1\ 2\ \ldots\ n)$ fixed) whose corresponding dessin
has trivial automorphism group.

\begin{table}[htbp]
\centering
Assume that $x=(1\ 2\ \ldots\ n)$ is fixed. \\
\begin{tabular}{|c|c|r|r|l|c|}
\hline
$b$ & $q$ & $\vt{N}$ & $\vt{D}$ & $y$ with $\AD \cong \{ 1 \}$ & $\gen{x, y}$ \\
\hline
3 & 2 & 12 & 60 & $(1\ 2\ 4)(3\ 5\ 6)$ & $S_{5}$ \\
\hline
3 & 3 & 464 & 162 &  &  \\
\hline
3 & 4 & 38720 & 47952 & $(1\ 2\ 3)(4\ 5\ 6)(7\ 8\ 10)(9\ 11\ 12)$ & $S_{12}$ \\
\hline
4 & 2 & 276 & 384 & $(1\ 2\ 3\ 5)(4\ 6\ 7\ 8)$ & $S_{8}$ \\
\hline
\end{tabular}
\vspace{1\baselineskip}
\caption{Exceptional $(b, q)$ pairs}\label{tab:exceptionbq}
\end{table}

Therefore, every passport $[n, b^{q}, n]$ of genus at least~$2$ with $b \ge 3$ and $q \ge 2$ admits a dessin with trivial automorphism group.
\end{proof}

\begin{rem}
For $b=2$, the above argument does not apply.

Indeed, when $b=2$ and the genus is at least~$2$, we have $n = 2q$ with $q \ge 4$.
Since $2 \mid n$, by \propref{prop:Dk} and \propref{prop:D}, we obtain
\begin{align}
D &= \bigcup_{\ell\colon \text{prime},\, \ell\, \mid\, n} D_{\f{n}{\ell}}, \\
\vt{D} &\ge \vt{D_{\f{n}{2}}} = \vt{D_{q}} = 2^{q}q!.
\end{align}
Then, by \propref{prop:tbq},
\begin{align}
\label{eq:b2dt}
\f{\vt{D}}{\vt{T}} &\ge 2^{q}q!\left(\f{n!}{2^{q}q!}\right)^{-1} = \f{2^{2q}(q!)^{2}}{(2q)!}.
\end{align}

Moreover,
\begin{align}
2^{2q} = (1+1)^{2q} = \sum_{k=0}^{2q}\binom{2q}{k}.
\end{align}
Since all terms in the sum are positive, it follows that
\begin{align}
2^{2q} > \binom{2q}{q} = \f{(2q)!}{(q!)^{2}},
\end{align}
hence
\begin{align}
\f{2^{2q}(q!)^{2}}{(2q)!} > 1.
\end{align}
Combining this with \eqref{eq:b2dt}, we obtain
\begin{align}
\f{\vt{D}}{\vt{T}} & > 1.
\end{align}
Thus, by the definition \eqref{eq:tncd}, we have $\vt{N} \le \vt{T} < \vt{D}$.
Therefore, the above method cannot establish $\vt{N} > \vt{D}$.
This indicates that a different approach is required for the case $b=2$.
\end{rem}

\subsection{Result}

Thus we obtain an alternative proof of \cite[Theorem~7.6]{Ohnishi26}.

\begin{thm}\cite[Theorem~7.6]{Ohnishi26}
\label{thm:class2}
If a uniform passport $[n, b^{q}, n]$ $(q \ge 1)$ has genus at least~$2$, then it admits a \dde with a trivial automorphism group.

The same statement holds for the passports $[b^{q}, n, n]$ and $[n, n, b^{q}]$.
\end{thm}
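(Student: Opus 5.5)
The plan is to assemble the cases already treated in this section and in \cite{Ohnishi26}, and then transfer the result to the other two orderings by the $S_{3}$-symmetry.

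First, the reduction to $[n,b^{q},n]$. As noted after the definition of the automorphism group, $\gen{x,y}=\gen{x,z}=\gen{y,z}$ with $z=(xy)^{-1}$. Hence the monodromy group, and therefore its centralizer $\AD$, is unchanged when the roles of black vertices, white vertices and faces are permuted. So a dessin with passport $[n,b^{q},n]$ and trivial automorphism group yields dessins with passports $[b^{q},n,n]$ and $[n,n,b^{q}]$ that also have trivial automorphism group. Concretely, one reorders the triple $(x,y,z)$, replacing a pair by, for instance, $(y,z)$ or $(z,x)$ as in the proof of \propref{prop:class3beq3}. The genus, given by \eqref{eq:genus}, is symmetric in $p,q,r$, so the genus hypothesis is preserved.

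For $[n,b^{q},n]$ itself, I would split according to $q$ and $b$.

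\textbf{Case $q=1$.} Then $b=n$ and the passport is $[n,n,n]$. The explicit construction in \cite[Proposition~7.1]{Ohnishi26} gives a dessin with $\AD\cong\{1\}$.

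\textbf{Case $q\ge 2$, $b=1$.} The passport is $[n,1^{n},n]$. Its genus, $q(b-1)/2$, is $0$, which is excluded by hypothesis.

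\textbf{Case $q\ge 2$, $b=2$.} The genus is $q/2\ge 2$, which forces $q$ even and $q\ge 4$. Here \propref{prop:c2b2} provides an explicit $y$ with the required cycle types and $\AD\cong\{1\}$.

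\textbf{Case $q\ge 2$, $b\ge 3$.} This is exactly \propref{prop:c2bge3}.

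These cases exhaust all $(b,q)$ with $n=bq$ and genus at least $2$, so the statement follows.

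No step is a genuine obstacle; the real work is already done in \propref{prop:c2b2} and \propref{prop:c2bge3}. The only point needing care is the case bookkeeping: checking that the genus condition $q(b-1)/2\ge 2$ really excludes $b=1$, forces $q\ge4$ when $b=2$, and leaves no pair $(b,q)$ uncovered. The exceptional pairs $(3,2),(3,3),(3,4),(4,2)$ are already handled inside \propref{prop:c2bge3}.
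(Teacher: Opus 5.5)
Your proposal is correct and follows essentially the same route as the paper: reduce by the $S_{3}$-symmetry to $[n,b^{q},n]$, use the genus formula $q(b-1)/2\ge 2$ to exclude $b=1$, and then handle $q=1$ via \cite[Proposition~7.1]{Ohnishi26}, $b=2$ via \propref{prop:c2b2}, and $b\ge 3$, $q\ge 2$ via \propref{prop:c2bge3}. Your explicit check that the cyclic rotations $(y,z)$ and $(z,x)$ give the orderings $[b^{q},n,n]$ and $[n,n,b^{q}]$ is a correct and slightly more explicit version of the paper's symmetry remark.
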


\begin{proof}
Since the automorphism group is symmetric with respect to black vertices, white vertices, and faces,
it suffices to prove the statement for the passport $[n, b^{q}, n]$ ($n = bq$).

By \eqref{eq:genus}, the genus is

\begin{align}
\f{n - (1 + q + 1)}{2} + 1 = \f{n - q}{2} = \f{q(b-1)}{2} \ge 2.
\end{align}
Therefore, we have $b \ge 2$.

The case $b = n$ ($q = 1$) was proved in \cite[Proposition~7.1]{Ohnishi26}.

The case $b = 2$ was proved in \propref{prop:c2b2}.

The case $3 \le b < n \ (q \ge 2)$ was proved in \propref{prop:c2bge3}.
\end{proof}

\FloatBarrier
\subsection*{Acknowledgements}

I would like to express my sincere gratitude to Associate Professor Yasuhiro Wakabayashi for his detailed guidance
and insightful advice throughout my research.

I am also deeply grateful to all the members of the Wakabayashi Laboratory for their continued support and
valuable discussions.

\vspace{-.5\baselineskip}

\bibliographystyle{amsalpha}
\bibliography{References}

\end{document}